\title{Arithmetical completeness theorems for monotonic modal logics}
\author{Haruka Kogure\footnote{Email: kogure1987@stu.kanazawa-u.ac.jp}
\footnote{College of Science and Engineering, School of Mathematics and Physics, Kanazawa University, Kakuma, Kanazawa 920-1192, Japan}
and Taishi Kurahashi\footnote{Email: kurahashi@people.kobe-u.ac.jp}
\footnote{Graduate School of System Informatics, Kobe University, 1-1 Rokkodai, Nada, Kobe 657-8501, Japan.}}
\date{}
\theoremstyle{plain}
\newtheorem{thm}{Theorem}[section]
\newtheorem{prop}[thm]{Proposition}
\newtheorem{cor}[thm]{Corollary}
\newtheorem{prob}[thm]{Problem}
\newtheorem{cl}[thm]{Claim}
\theoremstyle{definition}
\newtheorem{defn}[thm]{Definition}
\newtheorem{rem}[thm]{Remark}
\newcommand{\PA}{\mathsf{PA}}
\newcommand{\PR}{\mathrm{Pr}}
\newcommand{\Prf}{\mathrm{Prf}}
\newcommand{\Prov}{\mathrm{Prov}}
\newcommand{\Proof}{\mathrm{Proof}}
\newcommand{\gn}[1]{\ulcorner#1\urcorner}
\newcommand{\D}[1]{\mathbf{D#1}}
\newcommand{\B}[1]{\mathbf{B}_{#1}}
\newcommand{\M}{\mathbf{M}}
\newcommand{\SC}{\mathbf{\Sigma_1 C}}
\newcommand{\Fml}{\mathrm{Fml}_{\mathcal{L}_A}}
\newcommand{\num}{\overline}
\newcommand{\tto}{\twoheadrightarrow}
\newcommand{\N}{\mathbb{N}}
\newcommand{\LA}{\mathcal{L}_A}
\newcommand{\LB}{\mathcal{L}(\Box)}
\newcommand{\GL}{\mathsf{GL}}
\newcommand{\Sub}{\mathsf{Sub}}
\newcommand{\MN}{\mathsf{MN}}
\newcommand{\MNF}{\mathsf{MN4}}
\newcommand{\MNP}{\mathsf{MNP}}
\newcommand{\MND}{\mathsf{MND}}
\newcommand{\MNPF}{\mathsf{MNP4}}
\newcommand{\MNDF}{\mathsf{MND4}}
\newcommand{\PL}{\mathsf{PL}}
\begin{document}

\maketitle

\begin{abstract}
We investigate modal logical aspects of provability predicates $\PR_T(x)$ satisfying the following condition: \\
$\M$: If $T \vdash \varphi \to \psi$, then $T \vdash \PR_T(\gn{\varphi}) \to \PR_T(\gn{\psi})$. 

We prove the arithmetical completeness theorems for monotonic modal logics $\MN$, $\MNF$, $\MNP$, $\MNPF$, and $\MND$ with respect to provability predicates satisfying the condition $\M$. 
That is, we prove that for each logic $L$ of them, there exists a $\Sigma_1$ provability predicate $\PR_T(x)$ satisfying $\M$ such that the provability logic of $\PR_T(x)$ is exactly $L$. 
In particular, the modal formulas $\mathrm{P}$: $\neg \Box \bot$ and $\mathrm{D}$: $\neg (\Box A \land \Box \neg A)$ are not equivalent over non-normal modal logic and correspond to two different formalizations $\neg \PR_T(\gn{0=1})$ and $\neg \big(\PR_T(\gn{\varphi}) \land \PR_T(\gn{\neg \varphi}) \bigr)$ of consistency statements, respectively. 
Our results separate these formalizations in terms of modal logic. 
\end{abstract}

\section{Introduction}

In the usual proof of G\"odel's incompleteness theorems, provability predicates of a suitable theory $T$, that is, formulas weakly representing the set of all theorems of $T$, play important roles. 
In particular, a significant step in the proof of the second incompleteness theorem is to prove that a $\Sigma_1$ canonical provability predicate $\Prov_T(x)$ of $T$ satisfies the following Hilbert--Bernays--L\"ob's derivability conditions: 
\begin{description}
	\item [D1:] If $T \vdash \varphi$, then $T \vdash \Prov_T(\gn{\varphi})$. 
	\item [D2:] $T \vdash \Prov_T(\gn{\varphi \to \psi}) \to \bigl(\Prov_T(\gn{\varphi}) \to \Prov_T(\gn{\psi}) \bigr)$. 
	\item [D3:] $T \vdash \Prov_T(\gn{\varphi}) \to \Prov_T(\gn{\Prov_T(\gn{\varphi})})$. 
\end{description}
Then, it is shown that $T \nvdash \neg \Prov_T(\gn{0=1})$ if $T$ is consistent. 
Interestingly, the above conditions match modal logic by interpreting the modal operator $\Box$ as $\Prov_T(x)$. 
Then, many modal logical investigations of provability predicates have been made. 
Among other things, one of the most important progress in this study is Solovay's arithmetical completeness theorem \cite{Sol}. 
Solovay's theorem states that for any $\Sigma_1$-sound recursively enumerable extension $T$ of Peano Arithmetic $\PA$, the set of all $T$-verifiable modal principles is characterized by the normal modal logic $\GL$. 
Solovay's proof is carried out by embedding finite Kripke models appropriate to $\GL$ into arithmetic. 

On the other hand, there are non-canonical provability predicates that only partially satisfy the derivability conditions. 
Kurahashi \cite{Kur20_1} systematically studies the dependencies between various derivability conditions and several versions of the second incompleteness theorem.  
Among various non-canonical provability predicates, Rosser provability predicate $\PR_T^{\mathrm{R}}(x)$ of $T$ is particularly important in studying the incompleteness theorems, which was essentially introduced by Rosser \cite{Ros} to improve G\"odel's first incompleteness theorem (cf.~\cite{Kre}). 
Rosser provability predicate is useful to investigate the limitation of the second incompleteness theorem because it is known that the second incompleteness theorem does not hold for $\PR_T^{\mathrm{R}}(x)$, that is, $\PA \vdash \neg \PR_T^{\mathrm{R}}(\gn{0=1})$ holds. 
Hence, by the proof of the second incompleteness theorem, $\PR_T^{\mathrm{R}}(x)$ does not satisfy at least one of the conditions \textbf{D2} and \textbf{D3}. 

The derivability conditions for Rosser provability predicates have been studied by many authors (see \cite{Ara,BM,GS,KT,Kur20_1,Kur21,Sha}). 
In particular, these studies have shown that whether a Rosser provability predicate satisfies $\D{2}$ or $\D{3}$ depends on the choice of a predicate.
The existence of a Rosser provability predicate satisfying $\D{2}$ was proved by Bernardi and Montagna \cite{BM} and Arai \cite{Ara}, and the existence of a Rosser provability predicate satisfying $\D{3}$ was proved by Arai. 
By these observations, it is obtained that the second incompleteness theorem cannot be proved only by $\D{2}$ or $\D{3}$. 

If a provability predicate $\PR_T(x)$ satisfies $\D{2}$, then it is easily shown that for any formula $\varphi$ of arithmetic, $\neg \PR_T(\gn{0=1})$ and $\neg \bigl(\PR_T(\gn{\varphi}) \land \PR_T(\gn{\neg \varphi}) \bigr)$ are $T$-provably equivalent. 
Hence, if a Rosser provability predicate $\PR_T^{\mathrm{R}}(x)$ satisfies $\D{2}$, then $T$ also proves $\neg \bigl(\PR_T^{\mathrm{R}}(\gn{\varphi}) \land \PR_T^{\mathrm{R}}(\gn{\neg \varphi}) \bigr)$ for any $\varphi$. 
Here, we focus on the following condition $\M$. 
\begin{description}
	\item [$\M$:] If $T \vdash \varphi \to \psi$, then $T \vdash \PR_T(\gn{\varphi}) \to \PR_T(\gn{\psi})$. 
\end{description}
The condition $\M$ originates from Hilbert and Bernays \cite{HB} and it is adopted as the first condition of their derivability conditions. 
This condition is called $\B{2}$ in \cite{Kur20_2,Kur21} and is also considered in \cite{Mon, Vis}. 
It is proved in \cite{Kur21} that if a provability predicate $\PR_T(x)$ satisfies $\M$ and $\D{3}$, then $T \nvdash \neg \bigl(\PR_T(\gn{\varphi}) \land \PR_T(\gn{\neg \varphi}) \bigr)$ for some $\varphi$ whenever $T$ is consistent. 
This is a version of the second incompleteness theorem. 
Furthermore, it is proved in \cite{Kur21} that there exists a Rosser provability predicate satisfying both $\M$ and $\D{3}$, and hence there is a difference between the unprovability of the two consistency statements $\neg \PR_T(\gn{0=1})$ and $\neg \bigl(\PR_T(\gn{\varphi}) \land \PR_T(\gn{\neg \varphi}) \bigr)$. 

A modal logical study of Rosser provability predicates was initiated by Guaspari and Solovay \cite{GS}. 
They developed a modal logic $\mathsf{R}$ dealing with the more general notion of witness comparison. 
Shavrukov \cite{Sha} introduced a bimodal logic $\mathsf{GR}$, which explicitly deals with both usual and Rosser's predicates, and proved its arithmetical completeness theorem.
Kurahashi \cite{Kur20_1} investigated Rosser provability predicates satisfying $\D{2}$ in terms of modal logic.
The modal logic corresponding to such a Rosser provability predicate is a normal modal logic containing $\mathsf{KD}$, and by applying Solovay's proof method, the existence of a Rosser provability predicate exactly corresponding to $\mathsf{KD}$ was proved.

However, as mentioned above, provability predicates satisfying $\D{2}$ do not distinguish between $\neg \PR_T(\gn{0=1})$ and $\neg \bigl(\PR_T(\gn{\varphi}) \land \PR_T(\gn{\neg \varphi}) \bigr)$. 
Then, can we study provability predicates that do not satisfy $\D{2}$ in terms of modal logic?
The modal logic corresponding to such a provability predicate does not contain the weakest normal modal logic $\mathsf{K}$, and thus Kripke semantics would not work well for it. 
Thus, Solovay's proof technique of embedding Kripke models into arithmetic would not be directly applicable to such a logic. 
Recently, the second author of the present paper has attempted to extend the proof method of Solovay's theorem to non-normal modal logics. 
In \cite{Kur}, focusing on the fact that the pure logic of necessitation $\mathsf{N}$ introduced by Fitting, Marek, and Truszczy\'nski \cite{FMT} has a relational semantics similar to Kripke semantics, the arithmetical completeness theorems of $\mathsf{N}$ and several extensions of $\mathsf{N}$ are proved by embedding models corresponding to these logics into arithmetic.

We now turn our attention to logics satisfying the rule \textsc{RM} $\dfrac{A \to B}{\Box A \to \Box B}$ corresponding to the condition $\M$. 
The purpose of the present paper is to extend Solovay's proof method to extensions of the monotonic modal logic $\MN$ having the inference rules Necessitation and \textsc{RM}. 
Such logics are called \textit{monotonic modal logics}, and in particular it is known that monotonic neighborhood semantics works well for these logics (cf.~Chellas \cite{Che}). 
We prove the arithmetical completeness theorems for $\MN$ with respect to provability predicates satisfying $\M$. 
Furthermore, we prove the arithmetical completeness of the logic $\MNF = \MN + (\Box A \to \Box \Box A)$ corresponding to provability predicates satisfying $\M$ and $\D{3}$. 
Also, in monotonic modal logics, the two different consistency statements as above correspond to the two different axiom schemata $\mathrm{P}$: $\neg \Box \bot$ and $\mathrm{D}$: $\neg (\Box A \land \Box \neg A)$, respectively. 
We prove the arithmetical completeness theorems with respect to Rosser provability predicates for the logics $\MNP$ and $\MND$ obtained by adding $\mathrm{P}$ and $\mathrm{D}$ to $\MN$, respectively. 
These results show that the above two different consistency statements can be separated in terms of modal logic.
We also prove the arithmetical completeness theorem for the logic $\MNPF$ with respect to Rosser provability predicates satisfying $\M$ and $\D{3}$ whose existence is proved in \cite{Kur21}.

This paper is organized as follows. 
In Section \ref{Sec:PP}, we introduce basic notions on provability predicates and modal logics corresponding to provability predicates, and survey previous research. 
In Section \ref{Sec:MML}, we introduce the monotonic modal logics $\MN$, $\MNF$, $\MNP$, $\MND$, $\MNPF$, and $\MNDF$ and the notions of $\MN$-frames and $\MN$-models. 
Then, we prove that these logics have the finite frame property with respect to $\MN$-frames. 
Sections from \ref{Sec:MN} to \ref{Sec:MND} are devoted to proving the arithmetical completeness theorems of the logics $\MN$, $\MNF$, $\MNP$, $\MNPF$, and $\MND$. 
Finally, in Section \ref{Sec:FW}, we discuss future work.

\section{Provability predicates}\label{Sec:PP}

Throughout the present paper, $T$ always denotes a primitive recursively axiomatized consistent extension of Peano Arithmetic $\PA$ in the language $\LA$ of first-order arithmetic. 
Let $\omega$ denote the set of all natural numbers. 
For each $n \in \omega$, the numeral for $n$ is denoted by $\num{n}$. 
We fix some standard G\"odel numbering, and for each $\LA$-formula $\varphi$, let $\gn{\varphi}$ be the numeral for the G\"odel number of $\varphi$. 
We may assume that our G\"odel numbering is monotone. 
Namely, if $\alpha$ is a proper sub-expression of a finite sequence $\beta$ of $\LA$-symbols, then the G\"odel number of $\alpha$ is smaller than that of $\beta$. 
Let $\langle \xi_t \rangle_{t \in \omega}$ be the repetition-free primitive recursive enumeration of all $\LA$-formulas in ascending order of G\"odel numbers. 

We say that an $\LA$-formula $\PR_T(x)$ is a \textit{provability predicate} of $T$ if it weakly represents the set of all theorems of $T$ in $\PA$, namely, for any $n \in \omega$, $\PA \vdash \PR_T(\num{n})$ if and only if $n$ is the G\"odel number of a theorem of $T$. 
Let $\Proof_T(x, y)$ be a primitive recursive $\LA$-formula naturally expressing that $y$ is the G\"odel number of a $T$-proof of a formula whose G\"odel number is $x$. 
Let $\Prov_T(x)$ be the $\Sigma_1$ formula $\exists y \Proof_T(x, y)$, then it is shown that $\Prov_T(x)$ is a provability predicate of $T$. 

The notion of Rosser provability predicates was essentially introduced by Rosser \cite{Ros} to improve G\"odel's first incompleteness theorem. 
Also, Rosser provability predicates are useful to investigate G\"odel's second incompleteness theorem because the theorem does not hold for Rosser provability predicates. 
We say that a $\Sigma_1$ formula $\PR_T^{\mathrm{R}}(x)$ is a \textit{Rosser provability predicate} of $T$ if it is of the form
\[
	\exists y \bigl(\Fml(x) \land \Prf_T(x, y) \land \forall z < y\, \neg \Prf_T(\dot{\neg}(x), z) \bigr)
\]
for some primitive recursive $\LA$-formula $\Prf_T(x, y)$ satisfying the following conditions: 
\begin{enumerate}
	\item For any $\varphi$ and $n \in \omega$, $\PA \vdash \Prf_T(\gn{\varphi}, \num{n}) \leftrightarrow \Proof_T(\gn{\varphi}, \num{n})$, 
	\item $\PA \vdash \forall x \Bigl(\Fml(x) \to \bigl(\exists y \Prf_T(x, y) \leftrightarrow \Prov_T(x) \bigr) \Bigr)$. 
\end{enumerate}
Here, $\dot{\neg}(x)$ is a primitive recursive term corresponding to a primitive recursive function calculating the G\"odel number of $\neg \varphi$ from that of $\varphi$, and $\Fml(x)$ is a primitive recursive formula naturally expressing that $x$ is the G\"odel number of an $\LA$-formula. 
It is shown that Rosser provability predicates are $\Sigma_1$ provability predicates of $T$.

\subsection{Derivability conditions}

In connection with the second incompleteness theorem, the various conditions that provability predicates are expected to satisfy are known as derivability conditions.

\begin{defn}[Derivability conditions]
\leavevmode
\begin{description}
	\item [D1:] If $T \vdash \varphi$, then $T \vdash \PR_T(\gn{\varphi})$. 
	\item [D2:] $T \vdash \PR_T(\gn{\varphi \to \psi}) \to \bigl(\PR_T(\gn{\varphi}) \to \PR_T(\gn{\psi}) \bigr)$. 
	\item [D3:] $T \vdash \PR_T(\gn{\varphi}) \to \PR_T(\gn{\PR_T(\gn{\varphi})})$. 
	\item [$\Sigma_1$C:] If $\varphi$ is a $\Sigma_1$ sentence, then $T \vdash \varphi \to \PR_T(\gn{\varphi})$. 
	\item [M:] If $T \vdash \varphi \to \psi$, then $T \vdash \PR_T(\gn{\varphi}) \to \PR_T(\gn{\psi})$. 
\end{description}
\end{defn}
Every provability predicate satisfies $\D{1}$. 
If a $\Sigma_1$ provability predicate satisfies $\SC$, then it also satisfies $\D{3}$. 
Also, the condition $\M$ is weaker than $\D{2}$. 
It is shown that the provability predicate $\Prov_T(x)$ satisfies all of these conditions. 

There are various formulations of the statement that the theory $T$ is consistent, and we focus on two of them.
The first one is of the form $\neg \PR_T(\gn{0=1})$, which is widely used in the literature dealing with the second incompleteness theorem. 
The second one is the schematic consistency statement 
\[
	\{\neg \bigl(\PR_T(\gn{\varphi}) \land \PR_T(\gn{\neg \varphi}) \bigr) \mid \varphi\ \text{is an}\ \LA\text{-formula}\}, 
\]
that is introduced in \cite{Kur21}. 
Notice that for each $\LA$-formula $\varphi$, if $\PR_T(x)$ satisfies $\D{2}$, then $\neg \PR_T(\gn{0=1})$ and $\neg \bigl(\PR_T(\gn{\varphi}) \land \PR_T(\gn{\neg \varphi}) \bigr)$ are $T$-provably equivalent. 
Among other things, we introduce the following three versions of the second incompleteness theorem. 

\begin{thm}[The second incompleteness theorem]\label{G2}
Let $\PR_T(x)$ be a provability predicate of $T$. 
\begin{enumerate}
	\item \textup{(L\"ob \cite{Lob})} If $\PR_T(x)$ satisfies $\D{2}$ and $\D{3}$, then $T \nvdash \neg \PR_T(\gn{0=1})$. 
	Furthermore, $T \vdash \PR_T(\gn{\PR_T(\gn{\varphi}) \to \varphi}) \to \PR_T(\gn{\varphi})$. 
	\item \textup{(Jeroslow \cite{Jer}; Kreisel and Takeuti \cite{KT})} If $\PR_T(x)$ is $\Sigma_1$ and satisfies $\SC$, then for some $\LA$-sentence $\varphi$, $T \nvdash \neg \bigl(\PR_T(\gn{\varphi}) \land \PR_T(\gn{\neg \varphi}) \bigr)$. 
	\item \textup{(Kurahashi \cite{Kur20_2,Kur21})} If $\PR_T(x)$ satisfies $\M$ and $\D{3}$, then for some $\LA$-sentence $\varphi$, $T \nvdash \neg \bigl(\PR_T(\gn{\varphi}) \land \PR_T(\gn{\neg \varphi}) \bigr)$. 
\end{enumerate}
\end{thm}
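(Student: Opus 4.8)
The plan is to prove the three versions of the second incompleteness theorem stated in Theorem~\ref{G2} by separating the three cases, since they have quite different hypotheses and provenances.

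\textbf{Part (1).} Here I would reproduce the classical L\"ob-style argument. First I would establish the \emph{formalized L\"ob theorem}: for any $\LA$-sentence $\varphi$, $T \vdash \PR_T(\gn{\PR_T(\gn{\varphi}) \to \varphi}) \to \PR_T(\gn{\varphi})$. To do this, fix $\varphi$ and use the diagonal lemma to obtain a sentence $\sigma$ with $T \vdash \sigma \leftrightarrow (\PR_T(\gn{\sigma}) \to \varphi)$. From $\D{1}$, $\D{2}$, and $\D{3}$ one derives in a few standard steps that $T \vdash \PR_T(\gn{\sigma}) \to \PR_T(\gn{\PR_T(\gn{\sigma})})$, then combines this with $\PR_T(\gn{\PR_T(\gn{\sigma}) \to \varphi})$ via $\D{2}$ to get $\PR_T(\gn{\sigma}) \to \PR_T(\gn{\varphi})$, and finally feeds this back through the diagonal equivalence. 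The point where care is needed is tracking the implications inside the provability predicate and applying $\D{2}$ in the right order; this is routine but must be laid out cleanly. Once the formalized L\"ob theorem is in hand, instantiate $\varphi := 0=1$: if $T \vdash \neg \PR_T(\gn{0=1})$, then trivially $T \vdash \PR_T(\gn{0=1}) \to 0=1$, so $T \vdash \PR_T(\gn{\PR_T(\gn{0=1}) \to 0=1})$ by $\D{1}$, whence $T \vdash \PR_T(\gn{0=1})$, contradicting consistency of $T$ together with the assumed $T \vdash \neg \PR_T(\gn{0=1})$.

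\textbf{Part (2).} For the Jeroslow--Kreisel--Takeuti version, suppose toward a contradiction that $\PR_T(x)$ is $\Sigma_1$, satisfies $\SC$, and $T \vdash \neg(\PR_T(\gn{\varphi}) \land \PR_T(\gn{\neg\varphi}))$ for \emph{every} $\LA$-sentence $\varphi$. By the diagonal lemma take a sentence $\rho$ with $T \vdash \rho \leftrightarrow \PR_T(\gn{\neg\rho})$. Since $\PR_T(x)$ is $\Sigma_1$, the sentence $\PR_T(\gn{\neg\rho})$ is $\Sigma_1$, hence so is $\rho$ (up to $T$-provable equivalence), so $\SC$ gives $T \vdash \rho \to \PR_T(\gn{\rho})$, i.e.\ $T \vdash \PR_T(\gn{\neg\rho}) \to \PR_T(\gn{\rho})$. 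Combined with the instance of the schematic consistency statement for $\varphi := \rho$, namely $T \vdash \neg(\PR_T(\gn{\rho}) \land \PR_T(\gn{\neg\rho}))$, we get $T \vdash \neg \PR_T(\gn{\neg\rho})$, hence $T \vdash \neg\rho$ by the diagonal equivalence. But then $\neg\rho$ is a theorem of $T$, so $\PA \vdash \PR_T(\gn{\neg\rho})$ since $\PR_T$ is a provability predicate, and therefore $T \vdash \PR_T(\gn{\neg\rho})$, i.e.\ $T \vdash \rho$ — contradicting the consistency of $T$. (One should double-check the $\Sigma_1$-ness bookkeeping for $\rho$; this is the only subtle point.)

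\textbf{Part (3).} This is the version due to Kurahashi, assuming only $\M$ and $\D{3}$. Again assume for contradiction that $T \vdash \neg(\PR_T(\gn{\varphi}) \land \PR_T(\gn{\neg\varphi}))$ for all $\varphi$. Apply the diagonal lemma to get $\rho$ with $T \vdash \rho \leftrightarrow \PR_T(\gn{\neg\rho})$. From $\D{3}$ we have $T \vdash \PR_T(\gn{\neg\rho}) \to \PR_T(\gn{\PR_T(\gn{\neg\rho})})$, and from the diagonal equivalence together with $\M$ (applied to $T \vdash \PR_T(\gn{\neg\rho}) \to \rho$, which gives $T \vdash \PR_T(\gn{\PR_T(\gn{\neg\rho})}) \to \PR_T(\gn{\rho})$) we chain these to obtain $T \vdash \PR_T(\gn{\neg\rho}) \to \PR_T(\gn{\rho})$; also applying $\M$ to $T \vdash \rho \to \PR_T(\gn{\neg\rho})$ we can push $\PR_T(\gn{\rho})$ around as needed. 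Using the schematic consistency instance for $\varphi := \rho$ then forces $T \vdash \neg\PR_T(\gn{\neg\rho})$, hence $T \vdash \neg\rho$; but $\neg\rho$ being a theorem gives $T \vdash \PR_T(\gn{\neg\rho})$ and hence $T \vdash \rho$, contradicting consistency. The delicate part here — and what I expect to be the main obstacle across all three parts — is getting the exact pattern of $\M$/$\D{3}$ applications right without accidentally invoking $\D{2}$, since the whole interest of part (3) is that it uses strictly less than $\D{2}$; I would follow the derivation in \cite{Kur20_2,Kur21} closely for this.
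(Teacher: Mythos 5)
The paper itself gives no proof of Theorem \ref{G2}: all three clauses are presented as known results with citations (L\"ob; Jeroslow, Kreisel--Takeuti; Kurahashi), so there is no in-paper argument to compare against. Your three sketches follow the standard proofs from those sources and are essentially correct: part (1) is the usual L\"ob fixed-point derivation, yielding the formalized version first and then the unprovability of $\neg\PR_T(\gn{0=1})$ by instantiation; part (3) is exactly Kurahashi's argument, and your chain $\PR_T(\gn{\neg\rho}) \to \PR_T(\gn{\PR_T(\gn{\neg\rho})}) \to \PR_T(\gn{\rho})$ indeed uses only $\D{1}$, $\D{3}$, and $\M$, never $\D{2}$. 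The one step that genuinely needs care is the point you flag in part (2): $\SC$ as stated applies to sentences that are \emph{literally} $\Sigma_1$, and since in that clause you have neither $\D{2}$ nor $\M$, you cannot move $\PR_T$ across a $T$-provable equivalence, so ``$\Sigma_1$ up to $T$-provable equivalence'' is not good enough --- applying $\SC$ to an equivalent $\rho'$ would only give $T \vdash \rho \to \PR_T(\gn{\rho'})$, which is useless. The standard repair is to take the fixed point of the $\Sigma_1$ formula $\PR_T(\dot{\neg}(x))$ in its G\"odel--Carnap form, so that $\rho$ is itself a $\Sigma_1$ sentence and $\SC$ applies to it directly; with that adjustment the rest of your argument goes through verbatim.
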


Not all provability predicates satisfy the second incompleteness theorem. 
For example, Mostowski \cite{Mos} showed that there exists a $\Sigma_1$ provability predicate $\PR_T(x)$ of $T$ satisfying $\SC$ such that $\PA \vdash \neg \PR_T(\gn{0=1})$. 
This fact together with clause 2 of Theorem \ref{G2} implies that the two consistency statements $\neg \PR_T(\gn{0=1})$ and $\neg \bigl(\PR_T(\gn{\varphi}) \land \PR_T(\gn{\neg \varphi}) \bigr)$ are different in general, and the conclusion of clause 2 of Theorem \ref{G2} cannot be strengthened to $T \nvdash \neg \PR_T(\gn{0=1})$. 

A more prominent example of provability predicates for which the second incompleteness theorem does not hold are Rosser provability predicates, that is, it is shown that $\PA$ proves $\neg \PR_T^{\mathrm{R}}(\gn{0=1})$ for any Rosser provability predicate $\PR_T^{\mathrm{R}}(x)$ of $T$ (cf.~\cite{Kre}). 
Hence, at least one of $\D{2}$ and $\D{3}$ does not hold for $\PR_T^{\mathrm{R}}(x)$. 
Bernardi and Montagna \cite{BM} and Arai \cite{Ara} proved that there exists a Rosser provability predicate $\PR_T^{\mathrm{R}}(x)$ of $T$ satisfying $\D{2}$. 
For such $\PR_T^{\mathrm{R}}(x)$ and any $\LA$-formula $\varphi$, $T \vdash \neg \bigl(\PR_T^{\mathrm{R}}(\gn{\varphi}) \land \PR_T^{\mathrm{R}}(\gn{\neg \varphi}) \bigr)$ holds.  
On the other hand, Arai \cite{Ara} proved that there exists a Rosser provability predicate $\PR_T^{\mathrm{R}}(x)$ of $T$ satisfying $\D{3}$. 
These observations indicate that one of $\D{2}$ and $\D{3}$ cannot be dropped from the assumption of clause 1 of Theorem \ref{G2}. 
Moreover, the second author \cite{Kur21} proved that there exists a Rosser provability predicate of $T$ satisfying $\M$ and $\D{3}$. 
Thus, the conclusion of clause 3 of Theorem \ref{G2} cannot be strengthened to $T \nvdash \neg \PR_T(\gn{0=1})$.

\subsection{Modal logics of provability predicates}

The language $\LB$ of modal propositional logic consists of countably many propositional variables $p_0, p_1, \ldots$, the logical constant $\bot$, the logical connective $\to$, and the modal operator $\Box$. 
Other symbols such as $\top, \neg, \land, \lor$, and $\Diamond$ are introduced as abbreviations in the usual way. 

We say that a modal logic $L$ is \textit{normal} if it contains all tautologies in the language $\LB$ and the distribution axiom scheme $\Box (A \to B) \to (\Box A \to \Box B)$ and is closed under Modus Ponens (\textsc{MP}) $\dfrac{A\to B \quad A}{B}$, Necessitation (\textsc{Nec}) $\dfrac{A}{\Box A}$, and uniform substitution. 
The weakest normal modal logic is called $\mathsf{K}$. 
The normal modal logics $\GL$ and $\mathsf{KD}$ are obtained from $\mathsf{K}$ by adding the axiom schemata $\Box(\Box A \to A) \to \Box A$ and $\neg \Box \bot$, respectively. 

For each provability predicate $\PR_T(x)$ of $T$, we say that a mapping $f$ from $\LB$-formulas to $\LA$-sentences is an \textit{arithmetical interpretation} based on $\PR_T(x)$ if it satisfies the following clauses: 
\begin{itemize}
	\item $f(\bot)$ is $0=1$, 
	\item $f(A \to B)$ is $f(A) \to f(B)$, 
	\item $f(\Box A)$ is $\PR_T(\gn{f(A)})$. 
\end{itemize}
The \textit{provability logic} $\PL(\PR_T)$ of $\PR_T(x)$ is the set of all $\LB$-formulas $A$ satisfying that $T$ proves $f(A)$ for any arithmetical interpretation $f$ based on $\PR_T(x)$. 
It is shown that if $\PR_T(x)$ satisfies $\D{2}$, then $\PL(\PR_T)$ is a normal modal logic. 

A pioneering result in the research of provability logics is Solovay's arithmetical completeness theorem \cite{Sol} (see also \cite{AB,Boo,JD}). 
Solovay proved that if $T$ is $\Sigma_1$-sound, then $\PL(\Prov_T)$ is exactly the logic $\GL$. 
Solovay proved his theorem by defining a computable function for a given finite Kripke model of $\GL$, the so-called Solovay function by referring to its accessibility relation, and embedding the model into arithmetic. 
Solovay's proof method can be used to prove the arithmetical completeness theorems for other normal modal logics.
It is proved in \cite{Kur} that there exists a $\Sigma_1$ provability predicate $\PR_T(x)$ of $T$ such that $\PL(\PR_T) = \mathsf{K}$ (see also \cite{Kur18_1,Kur18_2}). 
Also, it is proved in \cite{Kur20_1} that there exists a Rosser provability predicate $\PR_T^{\mathrm{R}}(x)$ of $T$ such that $\PL(\PR_T^{\mathrm{R}}) = \mathsf{KD}$. 

As mentioned above, the two consistency statements $\neg \PR_T(\gn{0=1})$ and $\{\neg \bigl(\PR_T(\gn{\varphi}) \land \PR_T(\gn{\neg \varphi}) \bigr) \mid \varphi$ is an $\LA$-formula$\}$ are $T$-provably equivalent if $\PR_T(x)$ satisfies $\D{2}$, but not in general. 
Thus, the difference between these two consistency statements cannot be captured in the framework of normal modal logics. 


\section{Monotonic modal logics}\label{Sec:MML}

It is easily shown that every normal modal logic is closed under the rule \textsc{RM} $\dfrac{A \to B}{\Box A \to \Box B}$ of monotonicity. 
Notice that \textsc{Nec}, the distribution axiom scheme, the axiom scheme $\Box A \to \Box \Box A$, and \textsc{RM} are modal counterparts of the derivability conditions $\D{1}$, $\D{2}$, $\D{3}$, and $\M$, respectively. 
Our purpose of the present paper is to investigate provability predicates satisfying $\M$, and so we shall deal with logics that are closed under the rule \textsc{RM}. 
Such logics are called \textit{monotonic}. 
Thus, all normal modal logics are monotonic. 

We do not necessarily require the distribution axiom scheme because provability predicate satisfying $\M$ do not necessarily satisfy $\D{2}$.
On the other hand, since every provability predicate satisfies $\D{1}$, we require \textsc{Nec}. 
The weakest modal logic satisfying our requirements is called $\MN$ and is axiomatized as follows: 
The axioms of $\MN$ are only tautologies in the language $\LB$. 
The inference rules of $\MN$ are \textsc{MP}, \textsc{Nec}, and the rule \textsc{RM}.

The logic $\MN$ is strictly weaker than the weakest normal modal logic $\mathsf{K}$, and so Kripke semantics does not work well for $\MN$. 
On the other hand, extensions of $\MN$ are closed under the rule \textsc{RE} $\dfrac{A \leftrightarrow B}{\Box A \leftrightarrow \Box B}$, and it is known that neighborhood semantics provides a semantics alternative to Kripke semantics for such logics (cf.~Chellas \cite{Che}). 
We introduce the following relational semantics for extensions of $\MN$, which is in fact equivalent to the usual neighborhood semantics validating $\MN$ (see Remark \ref{Neighborhood} below). 
By adopting a relational semantics similar to Kripke semantics, we expect to understand how to extend Solovay's proof method to monotonic modal logics.
Our relational semantics is very similar to what is well known in the field of interpretability logic as generalized Veltman semantics, that was introduced by Verbrugge \cite{Ver} (see \cite{JRMV} for more detail).

\begin{defn}[$\MN$-frames and models]
We say that a tuple $(W, \prec)$ is an \textit{$\MN$-frame} if $W$ is a non-empty set and $\prec$ is a binary relation between $W$ and $(\mathcal{P}(W) \setminus \{\emptyset\})$ satisfying the following condition: 
\begin{description}
	\item [Monotonicity:] $(\forall x \in W) (\forall U, V \in \mathcal{P}(W)) (x \prec V\ \&\ V \subseteq U \Rightarrow x \prec U)$. 
\end{description}
We say that a triple $(W, \prec, \Vdash)$ is an \textit{$\MN$-model} if $(W, \prec)$ is an $\MN$-frame and $\Vdash$ is a binary relation between $W$ and the set of all formulas satisfying the usual conditions for satisfaction and the following condition: 
\begin{itemize}
	\item $x \Vdash \Box A \iff (\forall V \in \mathcal{P}(W)) \bigl(x \prec V \Rightarrow (\exists y \in V) (y \Vdash A) \bigr)$. 
\end{itemize}
A formula $A$ is said to be \textit{valid} in an $\MN$-frame $(W, \prec)$ if $x \Vdash A$ for all satisfaction relations $\Vdash$ on the frame and all $x \in W$. 
\end{defn}

It is easily shown that for each $\MN$-frame $(W, \prec)$, the set of all formulas valid in $(W, \prec)$ is closed under \textsc{MP}, \textsc{Nec}, uniform substitution, and \textsc{RM}. 
Therefore, every theorem of $\MN$ is valid in all $\MN$-frames. 

\begin{rem}\label{Neighborhood}
A monotonic neighborhood frame is a tuple $(W, \delta)$, where $W$ is a non-empty set and $\delta$ is a mapping $\mathcal{P}(W) \to \mathcal{P}(W)$ satisfying the following two conditions:
\begin{enumerate}
	\item $\delta(\emptyset) = \emptyset$, 
	\item For any $U, V \subseteq W$, if $U \subseteq V$, then $\delta(U) \subseteq \delta(V)$. 
\end{enumerate}
A monotonic neighborhood model is a triple $(W, \delta, v)$, where $(W, \delta)$ is a monotonic neighborhood frame and $v$ is a mapping from $\LB$-formulas into subsets of $W$ satisfying the following conditions: 
\begin{enumerate}
	\item $v(\bot) = \emptyset$, 
	\item $v(A \to B) = (W \setminus v(A)) \cup v(B)$, 
	\item $v(\Diamond A) = \delta(v(A))$. 
\end{enumerate}
It is shown that $\MN$-frames and monotonic neighborhood frames are transformable into each other through the equivalence $x \prec V \iff x \in \delta(V)$. 
In fact, if $(W, \prec, \Vdash)$ and $(W, \delta, v)$ satisfy this equivalence and the equivalence $x \Vdash p \iff x \in v(p)$ for all $x \in W$ and propositional variables $p$, then $x \Vdash A \iff x \in v(A)$ holds for all $x \in W$ and $\LB$-formulas $A$. 
This is proved by induction on the construction of $A$, and we give a proof of the case that $A$ is of the form $\Diamond B$: 
\begin{align*}
	x \Vdash \Diamond B & \iff (\exists V \in \mathcal{P}(W)) \bigl(x \prec V\ \&\ (\forall y \in V) (y \Vdash B) \bigr), \\
	& \stackrel{\textrm{I.H.}}{\iff}  (\exists V \in \mathcal{P}(W)) \bigl(x \prec V\ \&\ V \subseteq v(B) \bigr), \\
	& \stackrel{(\ast)}{\iff} x \prec v(B), \\
	& \iff x \in \delta(v(B)), \\
	& \iff x \in v(\Diamond B). 
\end{align*}
Notice that in the equivalence $(\ast)$, the monotonicity of $(W, \prec)$ is used. 
In this sense, our relational semantics based on $\MN$-frames is equivalent to monotonic neighborhood semantics. 
\end{rem}

We also deal with extensions of $\MN$. 
The logics $\MNP$ and $\MND$ are obtained from $\MN$ by adding the axiom schemata $\mathrm{P}$: $\neg \Box \bot$ and $\mathrm{D}$: $\neg (\Box A \land \Box \neg A)$, respectively. 
Notice that the axiom schemata $\mathrm{P}$ and $\mathrm{D}$ are equivalent over $\mathsf{K}$, but this is not the case in general. 
This fact corresponds to the fact that there exists a Rosser provability predicate $\PR_T^{\mathrm{R}}(x)$ of $T$ such that $T \nvdash \neg \bigl(\PR_T^{\mathrm{R}}(\gn{\varphi}) \land \PR_T^{\mathrm{R}}(\gn{\neg \varphi}) \bigr)$ for some $\LA$-formula $\varphi$. 
Let $\MNF$, $\MNPF$, and $\MNDF$ be logics obtained from $\MN$, $\MNP$, and $\MND$ by adding the axiom scheme $\Box A \to \Box \Box A$, respectively. 
It is easily shown that $\MNP$ is deductively equivalent to the logic obtained by adding the rule $\dfrac{\neg A}{\neg \Box A}$ into $\MN$.

As in Kripke semantics, the validity of each formula may be characterized as a property of binary relations of $\MN$-frames. 

\begin{defn}[$\MNP$-frames and $\MND$-frames]
Let $(W, \prec)$ be an $\MN$-frame. 
\begin{itemize}
	\item We say $(W, \prec)$ is \textit{transitive} if for any $x \in W$ and $V \in \mathcal{P}(W)$, if $x \prec V$ and for each $y \in V$, there is $U_y \in \mathcal{P}(W)$ such that $y \prec U_y$, then $x \prec \bigcup_{y \in V} U_y$. 
	\item We say $(W, \prec)$ is an \textit{$\MNP$-frame} if for any $x \in W$, there exists a $V \in \mathcal{P}(W)$ such that $x \prec V$. 
	\item We say $(W, \prec)$ is an \textit{$\MND$-frame} if for any $x \in W$ and $V \in \mathcal{P}(W)$, $x \prec V$ or $x \prec (W \setminus V)$. 
\end{itemize}
\end{defn}

\begin{prop}\label{FC}
Let $(W, \prec)$ be any $\MN$-frame.  
\begin{enumerate}
	\item $\Box p \to \Box \Box p$ is valid in $(W, \prec)$ if and only if $(W, \prec)$ is transitive. 
	\item $\neg \Box \bot$ is valid in $(W, \prec)$ if and only if $(W, \prec)$ is an $\MNP$-frame. 
	\item $\neg (\Box p \land \Box \neg p)$ is valid in $(W, \prec)$ if and only if $(W, \prec)$ is an $\MND$-frame. 
\end{enumerate}
\end{prop}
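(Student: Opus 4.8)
The plan is to prove each of the three biconditionals by the standard frame-correspondence argument, adapted to the monotonic relational semantics based on $\MN$-frames. For each item, one direction establishes that the frame condition forces validity of the formula, and the other direction establishes that a failure of the frame condition produces a countermodel. Since the key definitions of $\Box$-satisfaction and of transitivity, $\MNP$-frames, and $\MND$-frames are given in terms of the relation $\prec$ between points and nonempty subsets, the arguments will largely be unwinding of these definitions together with the Monotonicity condition.

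For clause 1, the direction from transitivity to validity goes as follows. Suppose $(W,\prec)$ is transitive and fix a satisfaction relation $\Vdash$ and a point $x$ with $x \Vdash \Box p$; I must show $x \Vdash \Box \Box p$, i.e.\ for every $V$ with $x \prec V$ there is $y \in V$ with $y \Vdash \Box p$. Toward a contradiction, suppose $x \prec V$ but every $y \in V$ satisfies $y \nVdash \Box p$, i.e.\ each $y \in V$ has some $U_y \in \mathcal{P}(W)$ with $y \prec U_y$ and $u \nVdash p$ for all $u \in U_y$. By transitivity, $x \prec \bigcup_{y \in V} U_y$, and no element of this union forces $p$, contradicting $x \Vdash \Box p$. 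For the converse, assume $(W,\prec)$ is not transitive, witnessed by $x$, $V$, and a family $(U_y)_{y \in V}$ with $x \prec V$, $y \prec U_y$ for each $y\in V$, but $x \nprec \bigcup_{y\in V} U_y$; I define $\Vdash$ on $p$ by setting $p$ true exactly off $\bigcup_{y\in V} U_y$, and check that $x \Vdash \Box p$ (using Monotonicity and $x \nprec \bigcup U_y$, so that any $V'$ with $x\prec V'$ is not contained in $\bigcup U_y$ and hence meets the set where $p$ holds) while $x \nVdash \Box\Box p$ (via the witness $V$). Clauses 2 and 3 are more direct: for clause 2, $\neg\Box\bot$ fails at $x$ iff $x \Vdash \Box\bot$ iff for some $V$ with $x\prec V$ there is $y\in V$ with $y\Vdash\bot$ — impossible — so $x\Vdash\Box\bot$ is equivalent to there being no $V$ with $x\prec V$, which is exactly the negation of the $\MNP$-frame condition at $x$. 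For clause 3, observe that $x \Vdash \Box p$ iff for every $V$ with $x\prec V$, $V$ meets $v(p)$, equivalently iff $x \nprec (W\setminus v(p))$; similarly $x\Vdash\Box\neg p$ iff $x\nprec v(p)$. So $\neg(\Box p \land \Box\neg p)$ fails at $x$ for some valuation iff there is a set $S$ (namely $v(p)$) with $x\nprec S$ and $x \nprec (W\setminus S)$, which is precisely the negation of the $\MND$-frame condition; conversely if the $\MND$ condition holds then for any valuation at least one of $x\prec v(p)$, $x\prec(W\setminus v(p))$ holds, so one of $\Box\neg p$, $\Box p$ fails at $x$.

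I expect the main subtlety to lie in clause 1, specifically in the countermodel direction: one must verify carefully that with $p$ interpreted as $W \setminus \bigcup_{y\in V} U_y$, the point $x$ still forces $\Box p$. This is where Monotonicity is essential — exactly as in the equivalence $(\ast)$ of Remark~\ref{Neighborhood} — since from $x\nprec \bigcup_{y\in V}U_y$ we need that every $V'$ with $x\prec V'$ fails to be a subset of $\bigcup_{y\in V}U_y$, for otherwise Monotonicity would give $x \prec \bigcup_{y\in V}U_y$. Everything else is routine unwinding of the satisfaction clause for $\Box$, together with the (trivial) observation that $\Box$ is definable from $\Diamond$ or directly, and the fact that the frame conditions are first-order-like closure properties of $\prec$. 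The whole proof is short; no induction on formula complexity is needed beyond what is already packaged into the semantics.
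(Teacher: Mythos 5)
Your proof is correct and follows essentially the same route as the paper's: the same countermodel (interpreting $p$ as true exactly off $\bigcup_{y \in V} U_y$) for the non-transitive case in clause 1, the same unwinding of the satisfaction clause in clause 2, and the same appeal to Monotonicity in clause 3. The only cosmetic difference is that in clause 3 you package both directions via the equivalence $x \Vdash \Box p \iff \neg\bigl(x \prec (W \setminus v(p))\bigr)$, whereas the paper argues the two implications separately with an explicit valuation; the content is identical.
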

\begin{proof}
1. $(\Rightarrow)$: We prove the contrapositive. 
Suppose that $(W, \prec)$ is not transitive, that is, there exist $x$, $V$, and $\{U_y\}_{y \in V}$ such that $x \prec V$, $y \prec U_y$ for all $y \in V$, and $\neg \bigl(x \prec \bigcup_{y \in V} U_y \bigr)$. 
Let $\Vdash$ be a satisfaction relation on $(W, \prec)$ satisfying that for any $u \in W$, $u \Vdash p \iff u \notin \bigcup_{y \in V} U_y$. 
Let $V'$ be any subset satisfying $x \prec V'$. 
Then, by the monotonicity, $V' \nsubseteq \bigcup_{y \in V} U_y$. 
That is, $z \notin \bigcup_{y \in V} U_y$ for some $z \in V'$. 
Hence, $z \Vdash p$ for some $z \in V'$. 
We obtain that $x \Vdash \Box p$. 

For each $y_0 \in V$, it follows from $U_{y_0} \subseteq \bigcup_{y \in V} U_y$ that for any $z \in U_{y_0}$, $z \nVdash p$. 
Since $y_0 \prec U_{y_0}$, we have $y_0 \nVdash \Box p$. 
Therefore, $x \nVdash \Box \Box p$. 
We conclude $x \nVdash \Box p \to \Box \Box p$. 

$(\Leftarrow)$: Suppose that $(W, \prec)$ is transitive. 
Let $\Vdash$ be any satisfaction relation on $(W, \prec)$ and $x \in W$ be such that $x \nVdash \Box \Box p$. 
Then, there exists a $V$ such that $x \prec V$ and for all $y \in V$, $y \nVdash \Box p$. 
Also, for each $y \in V$, there exists $U_y$ such that $y \prec U_y$ and for all $z \in U_y$, $z \nVdash p$. 
Then, for any $z \in \bigcup_{y \in V} U_y$, $z \nVdash p$. 
Since $x \prec \bigcup_{y \in V} U_y$, we obtain $x \nVdash \Box p$. 
We conclude that $\Box p \to \Box \Box p$ is valid in $(W, \prec)$. 

2. This is verified by the following equivalence: 
\begin{align*}
	x \Vdash \neg \Box \bot & \iff x \nVdash \Box \bot, \\
	& \iff (\exists V \in \mathcal{P}(W)) \bigl(x \prec V\ \&\ (\forall y \in V)(y \nVdash \bot) \bigr), \\
	& \iff (\exists V \in \mathcal{P}(W)) (x \prec V). 
\end{align*}

3. $(\Rightarrow)$: Suppose that $\neg (\Box p \land \Box \neg p)$ is valid in $(W, \prec)$. 
Let $x \in W$ and $V \in \mathcal{P}(W)$. 
Let $\Vdash$ be a satisfaction relation on $(W, \prec)$ satisfying that for each $u \in W$, $u \Vdash p \iff u \in V$.
Since $x \Vdash \neg (\Box p \land \Box \neg p)$, we have $x \nVdash \Box p$ or $x \nVdash \Box \neg p$. 
\begin{itemize}
	\item If $x \nVdash \Box p$, then there exists a $U \in \mathcal{P}(W)$ such that $x \prec U$ and $(\forall y \in U)(y \nVdash p)$. 
	This means $U \subseteq W \setminus V$. 
	By the monotonicity, we obtain $x \prec (W \setminus V)$. 
	\item If $x \nVdash \Box \neg p$, then there exists a $U \in \mathcal{P}(W)$ such that $x \prec U$ and $(\forall y \in U)(y \Vdash p)$. 
	Then, $U \subseteq V$, and hence $x \prec V$. 
\end{itemize}
We have shown that $(W, \prec)$ is an $\MND$-frame. 

$(\Leftarrow)$: 
Suppose that $(W, \prec)$ is an $\MND$-frame. 
Let $x \in W$ and $\Vdash$ be an arbitrary satisfaction relation on $(W, \prec)$. 
Let $V$ be the set $\{y \in W \mid y \Vdash p\}$, then we have $x \prec V$ or $x \prec (W \setminus V)$. 
If $x \prec V$, then $x \nVdash \Box \neg p$, and if $x \prec (W \setminus V)$, then $x \nVdash \Box p$. 
In either case, we obtain $x \Vdash \neg (\Box p \land \Box \neg p)$. 
\end{proof}

Let transitive $\MN$-frame, transitive $\MNP$-frame, and transitive $\MND$-frame be called $\MNF$-frame, $\MNPF$-frame, and $\MNDF$-frame, respectively.

We are ready to prove the finite frame property of the logics $\MN$, $\MNF$, $\MNP$, $\MND$, $\MNPF$, and $\MNDF$. 
Let $A$ be an arbitrary $\LB$-formula. 
If $A$ is of the form $\neg B$, then let ${\sim} A$ be $B$; otherwise, ${\sim} A$ denotes $\neg A$. 
Let $\Sub(A)$ be the set of all subformulas of $A$. 
We define $\num{\Sub(A)}$ to be the union of the sets $\Sub(A)$, $\{{\sim} B \mid B \in \Sub(A)\}$, and $\{\Box \bot, \neg \Box \bot, \Box \top, \neg \Box \top, \top, \bot\}$. 
Let $X$ be a finite set of $\LB$-formulas. 
We say that $X$ is \textit{$L$-consistent} if $L \nvdash \bigwedge X \to \bot$ where $\bigwedge X$ is a conjunction of all elements of $X$. 
We say that $X$ is \textit{$A$-maximally $L$-consistent} if $X \subseteq \num{\Sub(A)}$, $X$ is $L$-consistent, and for any $B \in \num{\Sub(A)}$, either $B \in X$ or ${\sim} B \in X$. 
It is easily shown that for each $L$-consistent subset $X$ of $\num{\Sub(A)}$, there exists an $A$-maximally $L$-consistent superset of $X$. 

\begin{thm}\label{FFP}
Let $L \in \{\MN, \MNF, \MNP, \MND, \MNPF, \MNDF\}$. 
Then, for any $\LB$-formula $A$, the following are equivalent:
\begin{enumerate}
	\item $L \vdash A$. 
	\item $A$ is valid in all $L$-frames. 
	\item $A$ is valid in all finite $L$-frames. 
\end{enumerate}
\end{thm}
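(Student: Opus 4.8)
The plan is to establish the cycle of implications $(1) \Rightarrow (2) \Rightarrow (3) \Rightarrow (1)$. The implication $(2) \Rightarrow (3)$ is immediate, since every finite $L$-frame is in particular an $L$-frame. For $(1) \Rightarrow (2)$ I would fix an arbitrary $L$-frame $(W, \prec)$ and argue by induction on the length of an $L$-proof of $A$ that every line is valid in $(W, \prec)$: the fact noted just before Remark~\ref{Neighborhood} gives that validity in a fixed $\MN$-frame is closed under \textsc{MP}, \textsc{Nec}, uniform substitution, and \textsc{RM}, so the only remaining point is that the extra axiom schemes of $L$ are valid in $(W,\prec)$. For $\MNF$, $\MNPF$, $\MNDF$ (resp. $\MNP$, $\MNPF$; resp. $\MND$, $\MNDF$) the frame $(W,\prec)$ is transitive (resp. an $\MNP$-frame; resp. an $\MND$-frame) by definition, so $\Box p \to \Box\Box p$ (resp. $\neg\Box\bot$; resp. $\neg(\Box p \land \Box\neg p)$) is valid in it by Proposition~\ref{FC}, hence so are all its substitution instances.

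The real work is $(3) \Rightarrow (1)$, which I would prove contrapositively. Assume $L \nvdash A$. Then $\{{\sim} A\}$ is $L$-consistent, hence extends to an $A$-maximally $L$-consistent set, and I let $W$ be the set of all $A$-maximally $L$-consistent sets; this is finite because $\num{\Sub(A)}$ is. For $C \in \num{\Sub(A)}$ write $\widehat{C} = \{y \in W : C \in y\}$, and put $x \Vdash p \iff p \in x$ on propositional variables. The accessibility relation is defined in the neighborhood style forced by the monotonic semantics: for nonempty $V \subseteq W$, set $x \prec V$ iff $V \cap \widehat{D} \ne \emptyset$ for every $\Box D \in x$ (any such $\Box D$ automatically lies in $\num{\Sub(A)}$). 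Monotonicity of $\prec$ is immediate, and since $\Box\top \in \num{\Sub(A)}$ and $\widehat{\top} = W$, no $x$ is $\prec$-related to $\emptyset$; thus $(W, \prec)$ is a finite $\MN$-frame.

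The core step is the Truth Lemma: for all $x \in W$ and $C \in \num{\Sub(A)}$, $x \Vdash C \iff C \in x$, proved by induction on $C$ with only the case $C = \Box B$ nontrivial. If $\Box B \in x$, any $V$ with $x \prec V$ meets $\widehat{B}$, so some $y \in V$ has $B \in y$, whence $y \Vdash B$ by the induction hypothesis, and $x \Vdash \Box B$. If $\Box B \notin x$, then $\neg\Box B \in x$; here $\widehat{{\sim} B} \ne \emptyset$ (otherwise $L \vdash B$, so $L \vdash \Box B$ by \textsc{Nec}, contradicting $L$-consistency of $x$), and for each $\Box D \in x$ the set $\{{\sim} B, D\}$ is $L$-consistent (otherwise $L \vdash D \to B$, so $L \vdash \Box D \to \Box B$ by \textsc{RM}, again contradicting $L$-consistency of $x$); hence $\widehat{{\sim} B} \cap \widehat{D} \ne \emptyset$ for every $\Box D \in x$, i.e. $x \prec \widehat{{\sim} B}$, and every $y \in \widehat{{\sim} B}$ has $B \notin y$, so $y \nVdash B$; thus $x \nVdash \Box B$. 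Applying the Truth Lemma to the member of $W$ containing ${\sim} A$ refutes $A$ in this finite $\MN$-model, which proves the case $L = \MN$.

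It then remains, for each of the remaining five logics, to check that the finite frame produced actually belongs to the frame class of $L$. For $\MNP$ and $\MNPF$ this is cheap: since $\Box\bot \in \num{\Sub(A)}$ and $L \vdash \neg\Box\bot$, we get $\neg\Box\bot \in x$, so $x \nVdash \Box\bot$ by the Truth Lemma, so some $V$ has $x \prec V$, i.e. $(W, \prec)$ is an $\MNP$-frame. For $\MND$ and $\MNDF$ one verifies ``$x \prec V$ or $x \prec (W \setminus V)$'' directly: if $x \not\prec V$ then some $\Box D \in x$ has $\widehat{D} \subseteq W \setminus V$, and for any $\Box E \in x$ the set $\{D, E\}$ is $L$-consistent (otherwise $L \vdash D \to \neg E$, so $L \vdash \Box D \to \Box\neg E$ by \textsc{RM}, and having both $\Box D$ and $\Box\neg E$ in $x$ contradicts the schema $\neg(\Box E \land \Box\neg E)$), so $\emptyset \ne \widehat{D} \cap \widehat{E} \subseteq (W \setminus V) \cap \widehat{E}$, giving $x \prec (W \setminus V)$; some care in choosing what to put into $\num{\Sub(A)}$ is needed to make this bookkeeping legitimate. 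The genuinely hard case, and the one I expect to be the main obstacle, is \emph{transitivity} of the canonical frame for the logics containing $\Box A \to \Box\Box A$, namely $\MNF$, $\MNPF$, $\MNDF$. The usual normal-modal-logic device of forcing the witness $y$ to carry $\Box D$ itself (so that $\Box D \in x$, $x \prec V \ni y$, $y \prec U$ propagate $\Box D$, hence $D$, downward into $\bigcup_y U_y$) does not transfer verbatim, because monotonic logics need not validate the conjunction axiom $\Box A \land \Box B \to \Box(A \land B)$, so one cannot merge the finitely many required boxed formulas into a single witnessing conjunction. The fix I would pursue is to engineer the accessibility relation for the transitive logics so that $x \prec V$ forces, \emph{for each} $\Box D \in x$ separately, a witness in $V$ containing both $D$ and $\Box D$; with such a relation transitivity follows by the propagation argument applied one boxed formula at a time, the $4$-axiom supplies the derivability step needed to re-prove the Truth Lemma, and one still has to check that this modification does not spoil the $\MNP$- and $\MND$-conditions when these are combined with transitivity in $\MNPF$ and $\MNDF$.
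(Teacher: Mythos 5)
Your treatment of $(1)\Rightarrow(2)\Rightarrow(3)$ and of the canonical finite model for $\MN$, $\MNP$, and $\MND$ coincides with the paper's: the same worlds ($A$-maximally $L$-consistent subsets of $\num{\Sub(A)}$), essentially the same relation ($x\prec V$ iff $V$ meets $\widehat{D}$ for every $\Box D\in x$), the same Truth Lemma via \textsc{RM}, and the same consistency arguments for the $\mathrm{P}$- and $\mathrm{D}$-frame conditions. You have also correctly diagnosed why transitivity is the hard point: without the aggregation axiom $\Box A\land\Box B\to\Box(A\land B)$ one cannot merge the boxed formulas of $x$ into a single witnessing conjunction.

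However, the fix you propose for $\MNF$, $\MNPF$, $\MNDF$ does not work as stated, and this is precisely the step where the real content lies. You ask that $x\prec V$ provide, for each $\Box D\in x$, \emph{one} witness $y\in V$ with both $D\in y$ and $\Box D\in y$. Re-proving the Truth Lemma in the direction $\Box C\notin x\Rightarrow x\nVdash\Box C$ then requires, for each $\Box D\in x$, a world containing all of $D$, $\Box D$, and ${\sim}C$, i.e.\ the $L$-consistency of the three-element set $\{D,\Box D,{\sim}C\}$. Neither \textsc{RM} nor the $4$-axiom yields this: from $L\vdash D\land\Box D\to C$ one cannot derive $L\vdash\Box D\to\Box C$ in a monotonic logic, because $\Box(D\land\Box D)$ does not follow from $\Box D\land\Box\Box D$. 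Concretely, take $C:=D\land\Box D$ with $D=p$: then $\{p,\Box p,{\sim}(p\land\Box p)\}$ is propositionally inconsistent, yet $\Box p\land\neg\Box(p\land\Box p)$ is $\MNF$-consistent (since $\MNF\nvdash\Box p\to\Box(p\land\Box p)$), so for $A=\Box p\to\Box(p\land\Box p)$ there is a world $x$ with $\Box(p\land\Box p)\notin x$ for which no neighborhood of the required kind exists, and your model would wrongly force $\Box(p\land\Box p)$ at $x$. The paper's definition instead demands two \emph{possibly distinct} witnesses $y_0,y_1\in V$ with $B\in y_0$ and $\Box B\in y_1$; then only the consistency of $\{D,{\sim}C\}$ (by \textsc{RM}) and of $\{\Box D,{\sim}C\}$ (by \textsc{RM} together with $\Box D\to\Box\Box D$) is needed, the Truth Lemma goes through with $V=\{y_D,z_D\mid\Box D\in x\}$, and transitivity still follows by propagating each boxed formula one at a time. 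The $\MND$-condition also has to be rechecked against this two-witness relation (the paper's case (i)/(ii) analysis for $\MNDF$), which your sketch defers.
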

\begin{proof}
The implications $(1 \Rightarrow 2)$ and $(2 \Rightarrow 3)$ are straightforward by Proposition \ref{FC}. 
We show the contrapositive of the implication $(3 \Rightarrow 1)$. 
Suppose $L \nvdash A$. 
Then, $\{{\sim} A\}$ is $L$-consistent, and thus there exists an $A$-maximally $L$-consistent set $X_A$ containing ${\sim} A$. 
Let $W$ be the set of all $A$-maximally $L$-consistent sets. 
Since $\num{\Sub(A)}$ is a finite set, $W$ is also a finite set containing $X_A$. 
We define a binary relation $\prec_L$ on $W$ depending on $L$ as follows: Let $x \in W$ and $V \in \mathcal{P}(W)$.  
\begin{itemize}
	\item For $L \in \{\MN, \MNP, \MND\}$, 
\[
	x \prec_L V : \iff V \neq \emptyset\ \&\ (\forall \Box B \in \num{\Sub(A)}) \bigl(\Box B \in x \Rightarrow (\exists y \in V) (B \in y) \bigr). 
\]
	\item For $L \in \{\MNF, \MNPF, \MNDF\}$, 
\begin{align*}
	x \prec_L V : & \iff V \neq \emptyset\\
	& \& \ (\forall \Box B \in \num{\Sub(A)}) \bigl(\Box B \in x \Rightarrow (\exists y_0, y_1 \in V) (B \in y_0\ \&\ \Box B \in y_1) \bigr). 
\end{align*}
\end{itemize}
Since $\prec_L$ satisfies the condition of monotonicity, $(W, \prec_L)$ is a finite $\MN$-frame. 
We define a satisfaction relation $\Vdash$ on $(W, \prec_L)$ as follows: For each $x \in W$ and propositional variable $p$, 
\[
	 x \Vdash p : \iff p \in x. 
\]

\begin{cl}\label{TL}
For any $B \in \num{\Sub(A)}$ and $x \in W$, 
\[
	x \Vdash B \iff B \in x.
\] 
\end{cl}
\begin{proof}
This is proved by induction on the construction of $B$. 
We give only a proof of the case that $B$ is of the form $\Box C$. 

$(\Rightarrow)$: 
Suppose $\Box C \notin x$.
Since $x$ is $A$-maximally $L$-consistent, $\neg \Box C \in x$. 
We would like to show that there exists a $V \in \mathcal{P}(W)$ such that $x \prec_L V$ and $y \nVdash C$ for all $y \in V$. 
Suppose, towards a contradiction, that $\{D, {\sim}C\}$ is $L$-inconsistent for some $D$ with $\Box D \in x$. 
Then, $L \vdash D \to C$. 
By the rule \textsc{RM}, $L \vdash \Box D \to \Box C$, and hence $L \vdash \Box D \land \neg \Box C \to \bot$. 
This contradicts the $L$-consistency of $x$. 
Thus, for any $\Box D \in x$, the set $\{D, {\sim}C\}$ is $L$-consistent, and hence we find a $y_D \in W$ such that $\{D, {\sim}C\} \subseteq y_D$. 

We distinguish the following two cases: 
\begin{itemize}
	\item Case 1: $L \in \{\MN, \MNP, \MND\}$. \\
Let $V : = \{y_D \mid \Box D \in x\}$. 
Since $\Box \top \in x$, $y_\top \in V$, and hence $V \neq \emptyset$. 
Then, by the definition of $\prec_L$, we obtain $x \prec_L V$. 
For each $y_D \in V$, we have ${\sim} C \in y_D$, and hence $C \notin y_D$. 
By the induction hypothesis, $y_D \nVdash C$. 

	\item Case 2: $L \in \{\MNF, \MNPF, \MNDF\}$. \\
Suppose, towards a contradiction, that $\{\Box D, {\sim}C\}$ is $L$-inconsistent for some $D$ with $\Box D \in x$. 
Then, $L \vdash \Box D \to C$. 
By the rule \textsc{RM}, $L \vdash \Box \Box D \to \Box C$. 
Since $L \vdash \Box D \to \Box \Box D$, we obtain $L \vdash \Box D \land \neg \Box C \to \bot$, a contradiction. 
Thus, for any $\Box D \in x$, the set $\{\Box D, {\sim}C\}$ is $L$-consistent, and hence we have a $z_D \in W$ such that $\{\Box D, {\sim}C\} \subseteq z_D$. 

Let $V : = \{y_D, z_D \mid \Box D \in x\}$. 
As above, $V \neq \emptyset$. 
By the definition of $\prec_L$, we obtain $x \prec_L V$. 
For each $w \in V$, we have ${\sim} C \in w$, and hence $C \notin w$. 
By the induction hypothesis, $w \nVdash C$. 
\end{itemize}

In either case, we conclude $x \nVdash \Box C$.

$(\Leftarrow)$: 
Suppose $\Box C \in x$. 
Let $V \in \mathcal{P}(W)$ be such that $x \prec_L V$. 
Then, by the definition of $\prec_L$, there exists a $y \in V$ such that $C \in y$. 
By the induction hypothesis, $y \Vdash C$. 
Hence, $x \Vdash \Box C$. 
\end{proof}

Since ${\sim} A \in X_A$, we have $A \notin X_A$. 
By Claim \ref{TL}, we conclude $X_A \nVdash A$. 
We have finished our proof of the case that $L$ is $\MN$. 

In the case of $L \in \{\MNF, \MNPF, \MNDF\}$, we prove that $(W, \prec_L)$ is transitive. 
Suppose that $x \prec_L V$ and $y \prec_L U_y$ for all $y \in V$. 
Let $\Box B \in x$. 
By the definition of $\prec_L$, there exists a $y_1 \in V$ such that $\Box B \in y_1$. 
Also by the definition of $\prec_L$, there are $z_0, z_1 \in U_{y_1}$ such that $B \in z_0$ and $\Box B \in z_1$. 
Since $z_0, z_1 \in \bigcup_{y \in V} U_y$, we obtain $x \prec_L \bigcup_{y \in V} U_y$. 

In the case of $L \in \{\MNP, \MNPF\}$, we show that $(W, \prec_L)$ is an $\MNP$-frame. 
Since every $A$-maximally $L$-consistent set contains the formula $\neg \Box \bot$, by Claim \ref{TL}, $\neg \Box \bot$ is valid in $(W, \prec_L)$. 
By Proposition \ref{FC}, we have that $(W, \prec_L)$ is an $\MNP$-frame. 

In the case of $L = \MND$, we show that $(W, \prec_L)$ is an $\MND$-frame. 
Let $x \in W$ and $V \in \mathcal{P}(W)$ be such that $\neg (x \prec_L V)$, and we would like to show $x \prec_L (W \setminus V)$. 
Let $\Box B \in x$. 
By the definition of $\prec_L$, there exists a $\Box C \in x$ such that for any $z \in V$, $C \notin z$. 
Suppose, towards a contradiction, that the set $\{B, C\}$ is $\MND$-inconsistent. 
Then, $\MND \vdash B \to \neg C$. 
By the rule \textsc{RM}, $\MND \vdash \Box B \to \Box \neg C$. 
Since $\MND \vdash \Box \neg C \to \neg \Box C$, we obtain $\MND \vdash \Box B \to \neg \Box C$, and hence $\MND \vdash \Box B \land \Box C \to \bot$. 
This contradicts the $\MND$-consistency of $x$.
Therefore, the set $\{B, C\}$ is $\MND$-consistent, and there exists a $y \in W$ such that $\{B, C\} \subseteq y$. 
Since $C \in y$, we have $y \notin V$, and hence $y \in W \setminus V$. 
We have shown that for any $\Box B \in x$, there exists a $y \in W \setminus V$ such that $B \in y$. 
By the definition of $\prec_L$, we conclude $x \prec_L (W \setminus V)$. 
Therefore, $(W, \prec_L)$ is an $\MND$-frame. 

In the case of $L = \MNDF$, we also show that $(W, \prec_L)$ is an $\MND$-frame. 
Let $x \in W$ and $V \in \mathcal{P}(W)$ be such that $\neg (x \prec_L V)$, and we would like to show $x \prec_L (W \setminus V)$. 
Let $\Box B \in x$. 
By the definition of $\prec_L$, there exists a $\Box C \in x$ such that (i) for any $z \in V$, $C \notin z$ or (ii) for any $z \in V$, $\Box C \notin z$. 
In the case (i), as above, it is shown that the set $\{B, C\}$ is $\MNDF$-consistent. 
By using the fact $\MNDF \vdash \Box B \to \Box \Box B$, it is shown that the set $\{\Box B, C\}$ is also $\MNDF$-consistent. 
Thus, there are $y, z \in W$ such that $\{B, C\} \subseteq y$ and $\{\Box B, C\} \subseteq z$. 
Since $C$ is in $y$ and $z$, they are not in $V$. 
Hence, $y, z \in W \setminus V$. 
In the case (ii), it is also shown that $\{B, \Box C\}$ and $\{\Box B, \Box C\}$ are $\MNDF$-consistent by using the fact $\MNDF \vdash \neg \Box \Box C \to \neg \Box C$. 
Thus, we obtain that there are $y, z \in W \setminus V$ such that $B \in y$ and $\Box B \in z$. 
In either case, we conclude $x \prec_L (W \setminus V)$. 
\end{proof}

From our proof of Theorem \ref{FFP}, we obtain the following corollary. 

\begin{cor}\label{PR}
For each $L \in \{\MN, \MNF, \MNP, \MND, \MNPF, \MNDF\}$, there exists a primitive recursive decision procedure for provability in $L$. 
\end{cor}

\section{Arithmetical completeness of $\MN$ and $\MNF$}\label{Sec:MN}

As mentioned before, the rule \textsc{RM} is a modal counterpart of the derivability condition $\M$. 
More precisely, the following proposition is easily proved. 

\begin{prop}[The arithmetical soundness of $\MN$ and $\MNF$]
Let $\PR_T(x)$ be a provability predicate of $T$ satisfying $\M$. 
Then, $\MN \subseteq \PL(\PR_T)$ holds. 
Furthermore, $\PR_T(x)$ satisfies the condition $\D{3}$ if and only if $\MNF \subseteq \PL(\PR_T)$. 
\end{prop}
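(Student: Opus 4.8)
The plan is to establish both statements by the standard arithmetical soundness argument: fix an arithmetical interpretation $f$ based on $\PR_T(x)$ and prove, by induction on the length of a derivation, that $T \vdash f(A)$ for every theorem $A$ of the logic in question, checking the image under $f$ of each axiom schema and each inference rule.

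For $\MN$, the base case is when $A$ is a tautology in $\LB$; then $f(A)$ is a tautological $\LA$-sentence (it is obtained from a propositional tautology by substituting $\LA$-sentences for the propositional atoms of its skeleton, treating propositional variables and maximal subformulas of the form $\Box B$ as atoms), so $T \vdash f(A)$ because $T$ extends $\PA$. For \textsc{MP} one uses that $f(B \to A)$ is literally $f(B) \to f(A)$. For \textsc{Nec}, from the induction hypothesis $T \vdash f(B)$ we obtain $T \vdash \PR_T(\gn{f(B)})$, i.e.\ $T \vdash f(\Box B)$, since every provability predicate satisfies $\D{1}$. For \textsc{RM}, applying the induction hypothesis to the premise $B \to C$ gives $T \vdash f(B) \to f(C)$, and then the hypothesis $\M$ yields $T \vdash \PR_T(\gn{f(B)}) \to \PR_T(\gn{f(C)})$, which is exactly $f(\Box B \to \Box C)$. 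Since $f$ was arbitrary, $\MN \subseteq \PL(\PR_T)$.

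For the biconditional, the direction ($\Rightarrow$) is the same induction carried out in $\MNF$: the only additional axioms are the instances of $\Box B \to \Box \Box B$, and $f(\Box B \to \Box \Box B)$ is precisely $\PR_T(\gn{f(B)}) \to \PR_T(\gn{\PR_T(\gn{f(B)})})$, an instance of $\D{3}$; hence if $\PR_T(x)$ satisfies $\D{3}$, then $T$ proves the $f$-image of every $\MNF$-theorem, giving $\MNF \subseteq \PL(\PR_T)$. For ($\Leftarrow$), given an $\LA$-sentence $\varphi$ I would pick an arithmetical interpretation $f$ with $f(p_0)=\varphi$; since $\Box p_0 \to \Box \Box p_0$ belongs to $\MNF \subseteq \PL(\PR_T)$, we get $T \vdash \PR_T(\gn{\varphi}) \to \PR_T(\gn{\PR_T(\gn{\varphi})})$, i.e.\ $\D{3}$ holds.

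There is no genuine obstacle here: this is the easy soundness half of the arithmetical completeness theorems, and the argument is essentially bookkeeping. The only points that demand a moment's care are aligning the rule \textsc{RM} with the condition $\M$ (rather than with the stronger $\D{2}$, which is not assumed) and, in the converse of the biconditional, noting that as $f$ ranges over all arithmetical interpretations the sentences $f(B)$ range over all $\LA$-sentences, so that a single well-chosen substitution recovers an arbitrary instance of $\D{3}$.
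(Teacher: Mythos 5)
Your proof is correct and is exactly the routine soundness argument the paper has in mind when it states that the proposition "is easily proved" without supplying a proof: induction on derivations matching tautologies, \textsc{MP}, \textsc{Nec}, and \textsc{RM} to $T$-provability, $\D{1}$, and $\M$ respectively, with the axiom $\Box A \to \Box\Box A$ matched to $\D{3}$ and the converse obtained by instantiating a propositional variable with an arbitrary $\LA$-sentence. No issues.
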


In this section, we prove the arithmetical completeness theorems of the logics $\MN$ and $\MNF$. 
Before proving the theorems, we prepare several notions. 
An $\LA$-formula is called \textit{propositionally atomic} if it is either atomic or of the from $Q x \varphi$ for $Q \in \{\forall, \exists\}$. 
Notice that every $\LA$-formula is a Boolean combination of propositionally atomic formulas. 
For each propositionally atomic formula $\varphi$, we prepare a propositional variable $p_\varphi$. 
We define a primitive recursive injection $I$ from $\LA$-formulas into propositional formulas recursively as follows: 
\begin{itemize}
	\item $I(\varphi)$ is $p_\varphi$ for every propositionally atomic formula $\varphi$, 
	\item $I(\neg \varphi)$ is $\neg I(\varphi)$, 
	\item $I(\varphi \circ \psi)$ is $I(\varphi) \circ I(\psi)$ for $\circ \in \{\land, \lor, \to\}$. 
\end{itemize}
We say that an $\LA$-formula $\varphi$ is a \textit{tautological consequence} (\textit{t.c.}) of a finite set $X$ of $\LA$-formulas if $I\bigl(\bigwedge X \to \varphi \bigr)$ is a tautology. 

For each $m \in \omega$, let $F_m$ be the set of all $\LA$-formulas, whose G\"{o}del numbers are less than or equal to $m$ and let $P_{T, m}$ be the finite set 
\[
	\{ \varphi \in F_m \mid \N \models \exists y \leq \num{m} \ \Proof_{T}(\gn{\varphi}, y) \}.
\] 
Then, it is shown that for each $m \in \omega$ and $\varphi$, whether $\varphi$ is a t.c.~of $P_{T, m}$ is primitive recursively determined. 

Next, for each $m \in \omega$, we define a binary relation $\tto_{m}$ between $\LA$-formulas as follows: 
\begin{center}
	$\varphi \tto_m \rho$ if and only if there exists a finite sequence of $\LA$-formulas $\psi_0, \ldots, \psi_k$ such that $\psi_0 \equiv \varphi$, $\psi_k \equiv \rho$, and $\psi_i \to \psi_{i+1} \in P_{T,m}$ for each $i < k$.
\end{center}
Here, $\psi \equiv \varphi$ denotes that $\psi$ and $\varphi$ are identical as sequences of symbols. 
It can be easily shown that the ternary relation $\{(\varphi, \rho, m) \mid \varphi \tto_m \rho\}$ is primitive recursive. 
The following proposition is also straightforward. 

\begin{prop}\label{FProp}
Let $m \in \omega$ and let $\varphi$ and $\rho$ be any $\LA$-formulas. 
\begin{enumerate}
	\item If $\varphi$ is a t.c.~of $P_{T,m}$, then $\varphi$ is provable in $T$.
	\item If $\varphi \tto_m \rho$ holds, then $\varphi \to \rho$ is a t.c.~of $P_{T, m}$. 
	\item The binary relation $\tto_m$ is transitive and reflexive. 
\end{enumerate}
\end{prop}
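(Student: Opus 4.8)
The plan is to reduce all three clauses to a single observation about the translation $I$: if $\theta$ is an $\LA$-formula for which $I(\theta)$ is a propositional tautology, then $\theta$ is a substitution instance of $I(\theta)$, obtained by substituting the propositionally atomic formula $\chi$ for each propositional variable $p_\chi$ (this substitution is coherent because the $p_\chi$ are pairwise distinct and $I$ commutes with the Boolean connectives). A substitution instance of a tautology is logically valid, so $T$, extending $\PA$, proves $\theta$. This observation, together with the definitions of $P_{T,m}$ and $\tto_m$, does all the work.

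For clause 1, assume $\varphi$ is a t.c.\ of $P_{T,m}$, i.e.\ $I\bigl(\bigwedge P_{T,m} \to \varphi\bigr)$ is a tautology. By the observation, $T \vdash \bigwedge P_{T,m} \to \varphi$, so it suffices to show $T \vdash \psi$ for each $\psi \in P_{T,m}$; then $T \vdash \bigwedge P_{T,m}$ and hence $T \vdash \varphi$ by Modus Ponens. But $\psi \in P_{T,m}$ means $\N \models \exists y \leq \num{m}\, \Proof_T(\gn{\psi}, y)$, and since $\Proof_T(x,y)$ correctly expresses the proof relation of $T$, this yields a genuine $T$-proof of $\psi$, so $T \vdash \psi$. (If $P_{T,m} = \emptyset$, then $\bigwedge P_{T,m}$ is $\top$ and the claim is immediate.)

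For clause 2, fix a witnessing sequence $\psi_0 \equiv \varphi, \dots, \psi_k \equiv \rho$ with $\psi_i \to \psi_{i+1} \in P_{T,m}$ for every $i < k$. Since $I$ commutes with the connectives, $I\bigl(\bigwedge P_{T,m} \to (\varphi \to \rho)\bigr)$ equals $\bigl(\bigwedge_{\chi \in P_{T,m}} I(\chi)\bigr) \to \bigl(I(\psi_0) \to I(\psi_k)\bigr)$, and each $I(\psi_i) \to I(\psi_{i+1}) = I(\psi_i \to \psi_{i+1})$ occurs among the conjuncts of the antecedent. Because $\{\,I(\psi_i) \to I(\psi_{i+1}) \mid i < k\,\}$ propositionally entails $I(\psi_0) \to I(\psi_k)$ (a chain of implications), this formula is a tautology, so $\varphi \to \rho$ is a t.c.\ of $P_{T,m}$. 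The degenerate case $k = 0$ gives $\varphi \equiv \rho$, and $\varphi \to \rho$ is already a tautology, hence a t.c.\ of $P_{T,m}$.

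For clause 3, reflexivity holds because the length-$0$ sequence $\psi_0 \equiv \varphi$ witnesses $\varphi \tto_m \varphi$, and transitivity holds by concatenating a witnessing sequence for $\varphi \tto_m \rho$ with one for $\rho \tto_m \sigma$: the endpoint of the first coincides with the start of the second, so the concatenation is again a witnessing sequence for $\varphi \tto_m \sigma$. I expect no genuine obstacle here; the only point requiring care is the bookkeeping in clause 2, namely keeping the passage between ``t.c.\ of $P_{T,m}$'' and ``propositional tautology in the variables $p_\chi$'' clean and handling the edge cases $k = 0$ and $P_{T,m} = \emptyset$. The proposition is essentially a routine consequence of the definitions of $I$, $P_{T,m}$, and $\tto_m$.
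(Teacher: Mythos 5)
Your proof is correct; the paper states this proposition without proof (calling it straightforward), and your argument is exactly the routine verification intended: the substitution $p_\chi \mapsto \chi$ turns the tautology $I(\bigwedge P_{T,m} \to \varphi)$ back into a $T$-provable formula, each member of $P_{T,m}$ is a genuine $T$-theorem, and the chain/concatenation arguments for clauses 2 and 3 are the obvious ones. The edge cases ($k=0$, empty $P_{T,m}$) are handled appropriately.
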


We are ready to prove the following uniform version of the arithmetical completeness of $\MN$ and $\MNF$. 
For the uniform arithmetical completeness theorem of $\GL$, see \cite[p.~132]{Boo}.

\begin{thm}[The uniform arithmetical completeness of $\MN$ and $\MNF$]\label{UACTMN}
For $L \in \{\MN, \MNF\}$, there exists a $\Sigma_1$ provability predicate $\PR_T(x)$ of $T$ satisfying $\M$ such that
\begin{enumerate}
	\item for any $\LB$-formula $A$ and any arithmetical interpretation $f$ based on $\PR_T(x)$, if $L \vdash A$, then $\PA \vdash f(A)$, and
	\item there exists an arithmetical interpretation $f$ based on $\PR_T(x)$ such that for any $\LB$-formula $A$, $L \vdash A$ if and only if $T \vdash f(A)$. 
\end{enumerate}
\end{thm}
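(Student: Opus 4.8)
The plan is to adapt Solovay's embedding technique to the relational ($\MN$-)semantics, using the finite frame property (Theorem \ref{FFP}) as the starting point. The crucial departure from the classical case is that $\prec_L$ relates points to \emph{sets} of points, so a single "Solovay function" tracing an accessibility path will not suffice; instead I expect to need a finite family of simultaneously defined limit points, one branch for each set that could witness a failure of a box formula, so that "$h$ stabilizes in branch $V$" can be arranged to reflect "$y \Vdash A$ for some $y \in V$."

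\textbf{Step 1: the universal model and the $\Sigma_1$ predicate.} Fix $L \in \{\MN, \MNF\}$. By Theorem \ref{FFP} it suffices to build one $\Sigma_1$ provability predicate $\PR_T(x)$ satisfying $\M$ together with one arithmetical interpretation $f$ such that for every $\LB$-formula $A$, $T \nvdash f(A)$ whenever $A$ is refuted in some finite $L$-frame. Rather than working formula-by-formula, I would follow the uniform strategy: enumerate (via the relation $\tto_m$ and the sets $P_{T,m}$) an increasing sequence of "stages" and, alongside the ordinary proof predicate, thread in auxiliary clauses that force $\PR_T$ to be a provability predicate weakly representing $T$'s theorems. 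The predicate $\PR_T(x)$ will be defined as $\Prov_T(x) \lor (\text{something }\Sigma_1\text{ about the Solovay-style computation})$, engineered so that $\M$ holds: since $\tto_m$ is transitive and reflexive and "$\varphi \tto_m \rho$" is primitive recursive (Proposition \ref{FProp}), provability of $\varphi \to \psi$ in $T$ will, from some stage on, be visible as $\varphi \tto_m \psi$, and the extra disjunct can be made to respect this, yielding $T \vdash \PR_T(\gn{\varphi}) \to \PR_T(\gn{\psi})$.

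\textbf{Step 2: the Solovay-style function and its arithmetized properties.} Take the finite $L$-frame $(W,\prec_L)$ refuting the target formula (for $\MNF$, the transitive one), add a new root $0$ below everything, and define a primitive recursive function $h$ on $\omega$ by the usual recursion: $h$ starts at $0$, and at stage $s+1$ it moves from its current value $w$ into a point of some $V$ with $w \prec_L V$ provided $T$ proves (by stage $s$) that the limit is outside $V$ — formalized using $P_{T,s}$ and the arithmetized sentences "$\lim h \in V$". Because $\prec_L$ is set-valued, "moving into $V$" must be handled by launching, for each relevant $V$, a separate coordinate of $h$ that picks a specific element of $V$; the limit is then a tuple of stabilized values, and one proves in $\PA$ the standard Solovay lemmas: $h$ reaches a limit, $T$ proves nothing false about where the limit lies, and for $w \neq 0$, $\PA \vdash (\lim h = w) \to \PR_T(\gn{\sigma_w})$ exactly for those $\sigma_w$ that the $\prec_L$-successor structure demands. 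The interpretation is $f(p) := \bigvee\{\sigma_w \mid w \Vdash p\}$.

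\textbf{Step 3: the truth lemma and conclusion.} Arguing in $\PA$ (using $\Sigma_1$-completeness for $T$ only where the classical proof does, which is permissible since the refuting model is genuinely finite), I would prove by induction on $B \in \num{\Sub(A)}$ that for each $w \in W$, $\PA \vdash (\lim h = w) \to (f(B) \leftrightarrow \text{"}w \Vdash B\text{"})$, the modal case being where monotonicity of $\prec_L$ and the set-valued clause "$x \Vdash \Box A \iff \forall V(x \prec V \Rightarrow \exists y \in V\, y \Vdash A)$" get matched against the arithmetized behavior of $\PR_T$ on the branched limit points. Since the real world $\N$ has $\lim h = 0$ and $0$ sits below the point refuting $A$, one concludes $T \nvdash f(A)$; clause 1 (soundness over $\PA$) is immediate from the already-stated arithmetical soundness of $L$. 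The main obstacle is Step 2: getting the set-valued accessibility relation to cohere with a \emph{single} computable $h$ while still keeping $\PR_T$ genuinely $\Sigma_1$ and provably monotone — i.e., choosing the branching bookkeeping so that the Solovay fixed-point lemmas survive and $\M$ is not destroyed by the extra disjunct. I expect the neighborhood reformulation in Remark \ref{Neighborhood} (via $x \prec V \iff x \in \delta(V)$) to be the right guide for organizing this bookkeeping.
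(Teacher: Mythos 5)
Your high-level architecture (finite frame property, a uniform primitive recursive enumeration of finite countermodels, a Solovay-style function, an interpretation $f(p)$ as a disjunction over points forcing $p$, and a truth lemma by induction on subformulas) matches the paper. But the step you yourself flag as ``the main obstacle'' is exactly the step the paper resolves by a mechanism your proposal does not contain, and the resolution you sketch instead would not work. You propose a walking function $h$ that repeatedly moves from its current point $w$ into some element of a set $V$ with $w \prec_L V$, branching into ``a separate coordinate of $h$ for each relevant $V$'' with a tuple-valued limit. For $\MN$-frames there is no transitivity or converse well-foundedness to make such a walk terminate or to make the limit sentences $S(\num{i})$ behave like a partition, and a tuple-valued limit destroys the basic Solovay lemmas (e.g.\ that at most one nonzero limit sentence is true, which is what makes $f(p)$ a well-behaved disjunction). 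Moreover, defining $\PR_T(x)$ as $\Prov_T(x) \lor (\dots)$ and hoping the extra disjunct ``respects'' $\tto_m$ leaves unexplained how one gets the two essential facts $T \nvdash \neg S(\num{i})$ and $\PA \vdash S(\num{i}) \to \neg\PR_T(\gn{\neg\bigvee_{j\in V}S(\num{j})})$ for every $V$ with $i \prec_n V$ --- the negative half of the modal truth lemma --- since a disjunct containing $\Prov_T(x)$ cannot be forced to \emph{omit} anything.

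The paper's actual device is different in two ways you would need. First, the function $h_0$ makes \emph{at most one} jump from $0$ to a point $i$ of some countermodel and then halts there forever; the entire set-valued accessibility structure above $i$ is encoded not in further movement of $h_0$ but in a second function $g_0$, defined simultaneously with $h_0$ by the \emph{double} recursion theorem, which enumerates the theorems of $T$ verbatim until the jump and thereafter outputs exactly those $\xi_t$ with $\neg\xi_t \notin X_{m-1}\cup Y_{i,m-1}$, where $Y_{i,m}$ collects the formulas $\varphi$ such that some $V$ with $i \prec_n V$ has $S_0(\num{l}) \tto_m \varphi$ for all $l \in V$. The provability predicate is $\PR_{g_0}(x) = \exists y\,(x = g_0(y)\land \Fml(x))$, i.e.\ the range of $g_0$, not $\Prov_T$ plus a disjunct; this is what lets $\PA$ prove, after the jump, the exact equivalence $\neg\varphi \in X_{m-1}\cup Y_{i,m-1} \leftrightarrow \neg\PR_{g_0}(\gn{\varphi})$, from which both $\M$ (via $\tto$-closure of $X$ and $Y$) and the positive and negative box clauses of the truth lemma follow. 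Second, the jump condition for $h_0$ must include, besides ``$\neg S_0(\num{j})$ is a tautological consequence of $P_{T,m}$'', a self-referential clause detecting that $T$ has proved $S_0(\num{j}) \to \neg\PR_{g_0}(\gn{\varphi})$ for a $\varphi$ that the post-jump behaviour of $g_0$ at $j$ would in fact output; this diagonalization is what yields $\Prov_T(\gn{0=1}) \leftrightarrow \exists x(S_0(x)\land x\neq 0)$ and hence $T \nvdash \neg S_0(\num{i})$, replacing the L\"ob-style argument of the classical proof. Without these two ingredients the construction does not close.
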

\begin{proof}
Let $L \in \{\MN, \MNF\}$. 
By Corollary \ref{PR}, we have a primitive recursive procedure to determine whether a given $\LB$-formula is $L$-provable or not.
Then, let $\langle A_n \rangle_{n \in \omega}$ be a primitive recursive enumeration of all $L$-unprovable formulas. 
For each $A_n$, we can primitive recursively construct a finite $L$-model $(W_n, \prec_n, \Vdash_n)$ falsifying $A_n$. 
We may assume that the sets $\{W_n\}_{n \in \omega}$ are pairwise disjoint and $\bigcup_{n \in \omega} W_n = \omega \setminus \{0\}$. 
We may also assume that $\langle (W_n, \prec_n, \Vdash_n) \rangle_{n \in \omega}$ is primitive recursively represented in $\PA$, and several basic properties of this enumeration is provable in $\PA$. 
For each $n \in \omega$, a function $c: \mathcal{P}(W_n) \rightarrow W_n$ is called an \textit{$n$-choice function} if for each $V \in \mathcal{P}(W_n)$, $c(V) \in V$.

We simultaneously define two primitive recursive functions $h_0$ and $g_0$ by using the double recursion theorem. 
Firstly, we define the function $h_0$. 
In the definition of $h_0$, the formula $\PR_{g_0}(x)$ defined as $\exists y (x = g_0(y) \land \Fml(x))$ based on $g_0$ is used. 

\begin{itemize}
	\item $h_0(0) = 0$. 
	\item $h_0(m+1) = \begin{cases}
				i & \text{if}\ h_0(m) = 0\\
					& \&\ i = \min \bigl\{j \in \omega \setminus \{0\} \mid \neg S_0(\num{j}) \ \text{is a t.c.~of}\ P_{T, m}\\
					& \quad\ \text{or}\ \exists \varphi \bigl[S_0(\num{j}) \to \neg \PR_{g_0}(\gn{\varphi})\ \text{is a t.c.~of}\ P_{T, m}\\
					& \quad \quad \quad \& \ \neg \varphi \in F_m\ \&\ \neg \varphi \notin X_m \cup Y_{j, m} \bigr]\bigr\}\\
			h_0(m) & \text{otherwise}.
		\end{cases}$
\end{itemize}
Here, $S_0(x)$ is the $\Sigma_1$ formula $\exists y(h_0(y) = x)$. 
Also, for each $j \in W_n$ and number $m$, $X_m$ and $Y_{j, m}$ are sets defined as follows: 
\begin{itemize}
	\item $X_m : = \{\varphi \in F_m \mid \exists \psi \in P_{T, m}$ s.t.~$\psi \tto_m \varphi\}$. 
	\item $Y_{j, m} : = \{\varphi \in F_m \mid \exists V \in \mathcal{P}(W_n)$ s.t.~$j \prec_n V$ and $\forall l \in V\, (S_0(\num{l}) \tto_m \varphi)\}$. 
\end{itemize}

We have to show that the function $h_0$ is actually primitive recursive. 
For this purpose, it suffices to show that the value of $h_0(m+1)$ is bounded by some number that is primitive recursively computed from $m$. 
In particular, it suffices to find a primitive recursively computed bound of $h_0(m+1)$ when $h_0(m) = 0$ and $h_0(m+1) \neq 0$. 
Here, we may assume that our enumeration $\langle (W_n, \prec_n, \Vdash_n) \rangle_{n \in \omega}$ is coded so that for each $m \in \omega$, numbers $n$ and $j_0 \in W_n$ satisfying the following condition are computed in a primitive recursive way: 
\begin{itemize}
	\item for any $V \in \mathcal{P}(W_n)$ with $j_0 \prec_n V$, there exists an $l \in V$ such that $l \geq m$.
\end{itemize} 
Then, we claim that if $h_0(m) = 0$ and $h_0(m+1) = i \neq 0$, then $i \leq \max \{j_0, m\}$. 
And this claim guarantees the primitive recursiveness of $h_0$. 

We show the claim. 
Suppose $h_0(m) = 0$ and $h_0(m+1) = i \neq 0$. 
If $P_{T, m}$ is not propositionally satisfiable, then $\neg S_0(\num{1})$ is a t.c.~of $P_{T, m}$, and hence $i = 1 \leq \max \{j_0, m\}$. 
So we assume that $P_{T, m}$ is propositionally satisfiable. 
In the case that $\neg S_0(\num{i})$ is a t.c.~of $P_{T, m}$, then $S_0(\num{i})$ is a subformula of a formula contained in $P_{T, m}$ because $S_0(\num{i})$ is propositionally atomic, and thus $i < m$. 
In the case that there exists a formula $\varphi$ such that $S_0(\num{i}) \to \neg \PR_{g_0}(\gn{\varphi})$ is a t.c.~of $P_{T, m}$, $\neg \varphi \in F_m$, and $\neg \varphi \notin X_m \cup Y_{j, m}$, then we distinguish the two cases based on whether $\neg \PR_{g_0}(\gn{\varphi})$ is a t.c.~of $P_{T, m}$. 
If $\neg \PR_{g_0}(\gn{\varphi})$ is a t.c.~of $P_{T, m}$, then $S_0(\num{j_0}) \to \neg \PR_{g_0}(\gn{\varphi})$ is also a t.c.~of $P_{T, m}$. 
By the choice of $j_0$, for any $V \in \mathcal{P}(W_n)$ with $j_0 \prec_n V$, there exists an $l \in V$ such that $S_0(\num{l}) \notin F_m$, and hence $S_0(\num{l}) \not \tto_m \neg \varphi$. 
Thus, $\neg \varphi \notin X_m \cup Y_{j_0, m}$. 
It follows that $i \leq j_0$. 
If $\neg \PR_{g_0}(\gn{\varphi})$ is not a t.c.~of $P_{T, m}$, then $S_0(\num{i})$ is a subformula of a formula contained in $P_{T, m}$ because $S_0(\num{i}) \to \neg \PR_{g_0}(\gn{\varphi})$ is a t.c.~of $P_{T, m}$ and $S_0(\num{i})$ is propositionally atomic. 
It follows that $i < m$. 

Secondly, we define a primitive recursive function $g_0$ step by step, that enumerates all theorems of $T$. 
The definition of $g_0$ consists of Procedures 1 and 2, and it starts with Procedure 1. 
The values of $g_0(0), g_0(1), \ldots$ are defined by referring to $T$-proofs and the values of the function $h_0$. 
At the first time $h_0(m + 1) \neq 0$, the definition of $g_0$ is switched to Procedure 2. 
In the definition of $g_0$, we identify each formula with its G\"{o}del number.

\vspace{0.1in}

\textsc{Procedure 1}. 

Stage $m$:
\begin{itemize}
 \item If $h_0(m+1) = 0$,
\begin{equation*}
g_0(m)  = \begin{cases}
     \varphi & \text{if}\ m\ \text{is a}\ T\text{-proof of}\ \varphi. \\
             0 & \text{if}\ m\ \text{is not a}\ T\text{-proof of any formula}.
           \end{cases}
\end{equation*} 

Then, go to Stage $m+1$. 

\item If $h_0(m+1) \neq 0$, go to Procedure 2.
\end{itemize}

\textsc{Procedure 2}.

Suppose $m$ and $i \neq 0$ satisfy $h_0(m)=0$ and $h_0(m+1)=i$. 
Let $n$ be a number such that $i \in W_n$. 
Let $\langle \xi_t \rangle_{t \in \omega}$ be the repetition-free primitive recursive enumeration of all $\LA$-formulas in ascending order of G\"odel numbering, which is introduced in Section \ref{Sec:PP}. 
The values of $g_0(m), g_0(m+1), \ldots$ are defined depending on whether $i \Vdash_n \Box \bot$. 

If $i \Vdash_n \Box \bot$, then for any $t$, we define
\[
	g_0(m+t)= \xi_t. 
\]

If $i \nVdash_n \Box \bot$, then for any $t$, we define
\begin{equation*}
g_0(m+t)= \begin{cases} \xi_t & \text{if}\ \neg \xi_t \notin X_{m-1} \cup Y_{i, m-1}, \\
				0 & \text{otherwise}.
		\end{cases}
\end{equation*}

The definition of $g_0$ has just been completed. 
We define the formulas $\Prf_{g_0}(x,y)$ and $\PR_{g_0}(x)$ as follows: 
\begin{itemize}
	\item $\Prf_{g_0}(x, y) \equiv (x=g_0(y) \wedge \Fml(x))$, 
	\item $\PR_{g_0}(x) \equiv \exists y \Prf_{g_0}(x, y)$. 
\end{itemize}

\begin{cl}\label{CLh}
\leavevmode
\begin{enumerate}
	\item $\PA \vdash \forall x \forall y(0 < x < y \land S_0(x) \to \neg S_0(y))$. 
	\item $\PA \vdash \Prov_T(\gn{0=1}) \leftrightarrow \exists x(S_0(x) \land x \neq 0)$. 
	\item For each $i \in \omega \setminus \{0\}$, $T \nvdash \neg S_0(\num{i})$. 
	\item For each $m \in \omega$, $\PA \vdash \forall x \forall y(h_0(x) = 0 \land h_0(x+1) = y \land y \neq 0 \to x > \num{m})$.
\end{enumerate}
\end{cl}
\begin{proof}
1. This is clear because $\PA \vdash \forall x \forall u \forall v(x \neq 0 \land h_0(u) = x \land v \geq u \to h_0(v) = x)$. 

2. We argue in $\PA$. 
$(\to)$: If $T$ is inconsistent, then $P_{T, m}$ is propositionally unsatisfiable for some $m$. 
Hence, $\neg S_0(\num{1})$ is a t.c.~of $P_{T, m}$, and thus $h_0(m+1) \neq 0$.  
This means that $S_0(i)$ holds for some $i \neq 0$. 

$(\leftarrow)$: 
Suppose that $S_0(i)$ holds for some $i \neq 0$. 
Let $m$ and $n$ be such that $h_0(m) = 0$, $h_0(m+1) = i$, and $i \in W_n$. 
We would like to show that $T$ is inconsistent. 
By the definition of $h_0$, we distinguish the following two cases: 

Case 1: $\neg S_0(\num{i})$ is a t.c.~of $P_{T, m}$. \\
	Then, $\neg S_0(\num{i})$ is $T$-provable. 
	Since $S_0(\num{i})$ is a true $\Sigma_1$ sentence, it is provable in $T$. 
	Therefore, $T$ is inconsistent. 

Case 2: There exists a $\varphi$ such that $S_0(\num{i}) \to \neg \PR_{g_0}(\gn{\varphi})$ is a t.c.~of $P_{T, m}$, $\neg \varphi \in F_m$, and $\neg \varphi \notin X_m \cup Y_{i, m}$. \\
	Since $\neg \varphi \notin X_{m-1} \cup Y_{i, m-1}$, $g_0(m + t) = \varphi$ for some $t$.  
	Then, $S_0(\num{i}) \land \PR_{g_0}(\gn{\varphi})$ is a true $\Sigma_1$ sentence, and so it is provable in $T$.
	On the other hand, since $S_0(\num{i}) \to \neg \PR_{g_0}(\gn{\varphi})$ is a t.c.~of $P_{T, m}$, it is also provable in $T$. 
	Therefore, $T$ is inconsistent. 

3. Suppose $T \vdash \neg S_0(\num{i})$ for $i \neq 0$. 
Let $p$ be a $T$-proof of $\neg S_0(\num{i})$. 
Then, $\neg S_0(\num{i}) \in P_{T, p}$, and thus $h_0(p+1) \neq 0$. 
That is, $\exists x(S_0(x) \land x \neq 0)$ is true. 
By clause 2, $T$ is inconsistent, a contradiction. 

4. This is because $\PA \vdash h_0(\num{m}) = 0$ for each $m \in \omega$. 
\end{proof}

\begin{cl}\label{CL1}\leavevmode
\begin{enumerate}
\item $\PA+ \neg \Prov_T(\gn{0=1}) \vdash \forall x \bigl(\Prov_{T}(x) \leftrightarrow \PR_{g_0}(x) \bigr)$.
\item For any $n \in \omega$ and any $\LA$-formula $\varphi$, $\PA \vdash \Proof_{T}(\gn{\varphi},\num{n}) \leftrightarrow \Prf_{g_0}(\gn{\varphi},\num{n})$.
\end{enumerate}
\end{cl}
\begin{proof}
1. By the definition of $g_0$, 
\[
	\PA \vdash \neg \exists x(S_0(x) \wedge x \neq 0) \rightarrow \forall x \bigl(\Prov_{T}(x)\leftrightarrow \PR_{g_0}(x) \bigr)
\]
holds.   
Since $\Prov_T(\gn{0=1})$ and $\exists x (S_0(x) \land x \neq 0)$ are equivalent in $\PA$ by Claim \ref{CLh}.2, we obtain 
\[
	\PA \vdash \neg \Prov_T(\gn{0=1}) \rightarrow \forall x \bigl(\Prov_{T}(x) \leftrightarrow \PR_{g_0}(x) \bigr).
\]

2. Let $n$ be a natural number. 
We reason in $\PA$: 
By Claim \ref{CLh}.4, the construction of $g_0$ does not switch to Procedure 2 before Stage $n+1$. 
Then, $n$ is a $T$-proof of $\varphi$ if and only if $g_0(n) = \varphi$. 
\end{proof}

Therefore, our formula $\PR_{g_0}(x)$ is a $\Sigma_1$ provability predicate of $T$.

\begin{cl}\label{CL2}
$\PA$ proves the following statement: 
``Let $m$, $i \neq 0$, and $n$ be such that $h_0(m) = 0$, $h_0(m + 1) = i$, $i \in W_n$ and $i \nVdash_n \Box \bot$. 
Then, for any $\LA$-formula $\varphi$, $\neg \varphi \in X_{m-1} \cup Y_{i, m-1}$ if and only if $\neg \PR_{g_0}(\gn{\varphi})$ holds''. 
\end{cl}
\begin{proof}
We argue in $\PA$: Let $m$, $i \neq 0$ and $n$ satisfy $h_0(m)=0$, $h_0(m+1)=i$, $i \in W_n$, and $i \nVdash_n \Box \bot$. 

$(\Rightarrow)$: Assume $\neg \varphi \in X_{m-1} \cup Y_{i, m-1}$. 
Suppose, towards a contradiction, that $\varphi$ is output by $g_0$. 
Since $\neg \varphi \in X_{m-1} \cup Y_{i, m-1}$, $\varphi$ is output in Procedure 1. 
Then, there exists a $T$-proof $p \leq m-1$ of $\varphi$, and hence $\varphi \in P_{T, m-1}$. 
We distinguish the following two cases:

Case 1: $\neg \varphi \in X_{m-1}$. \\
		Then, there exists a $\psi \in P_{T, m-1}$ such that $\psi \tto_{m-1} \neg \varphi$. 
		By Proposition \ref{FProp}.2, $\psi \to \neg \varphi$ is a t.c.~of $P_{T, m-1}$. 
		Since $\varphi \in P_{T, m-1}$, $P_{T, m-1}$ is propositionally unsatisfiable. 
		In particular, $\neg S_0(\num{1})$ is a t.c.~of $P_{T, m -1}$. 
		By the definition of $h_0$, $h_0(m) = 1 \neq 0$. 
		This is a contradiction. 

Case 2: $\neg \varphi \in Y_{i, m-1}$. \\
		Then, there exists a $V \in \mathcal{P}(W_n)$ such that $i \prec_n V$ and $S_0(\num{j}) \tto_{m-1} \neg \varphi$ for any $j \in V$. 
		Since $V$ is non-empty, there exists a $j \neq 0$ such that $S_0(\num{j}) \tto_{m-1} \neg \varphi$. 
		By Proposition \ref{FProp}.2, $S_0(\num{j}) \to \neg \varphi$ is a t.c.~of $P_{T, m-1}$. 
		Since $\varphi \in P_{T, m-1}$, $\neg S_0(\num{j})$ is a t.c.~of $P_{T, m-1}$. 
		Hence, $h_0(m) \neq 0$, a contradiction. 

Therefore, $\varphi$ is not output by $g_0$. 
In other words, $\neg \PR_{g_0}(\gn{\varphi})$ holds. 

$(\Leftarrow)$: This implication directly follows from the definition of $g_0$. 
\end{proof}

We prove that $\PR_{g_0}(x)$ satisfies the condition $\M$.

\begin{cl}\label{CL3}
For any $\LA$-formulas $\varphi$ and $\psi$, if $T \vdash \varphi \to \psi$, then $\PA \vdash \PR_{g_0}(\gn{\varphi}) \to \PR_{g_0}(\gn{\psi})$. 
\end{cl}
\begin{proof}
Suppose $T \vdash \varphi \to \psi$. 
Then, $\PA \vdash \Prov_T(\gn{\varphi}) \to \Prov_T(\gn{\psi})$. 
Since $\PA + \neg \exists x (S_0(x)\wedge x\neq 0) \vdash \forall x \bigl(\Prov_T(x) \leftrightarrow \PR_{g_0}(x) \bigr)$, we have that $\PA + \neg \exists x (S_0(x)\wedge x\neq 0)$ proves $\PR_{g_0}(\gn{\varphi}) \to \PR_{g_0}(\gn{\psi})$. 

Next, we show $\PA + \exists x (S_0(x)\wedge x\neq 0) \vdash \PR_{g_0}(\gn{\varphi}) \to \PR_{g_0}(\gn{\psi})$. 
By the supposition, $\neg \psi \to \neg \varphi$ has a standard proof $p$. 
We reason in the theory $\PA + \exists x (S_0(x)\wedge x\neq 0)$: 
Let $m$, $i \neq 0$, and $n$ satisfy $h_0(m)=0$, $h_0(m+1)=i$, and $i \in W_n$. 
If $i \Vdash_n \Box \bot$, then $g_0$ outputs all $\LA$-formulas, and hence $\PR_{g_0}(\gn{\psi})$ holds, and hence $\PR_{g_0}(\gn{\varphi}) \to \PR_{g_0}(\gn{\psi})$ holds. 

So, we may assume that $i \nVdash_n \Box \bot$. 
Suppose that $\neg \PR_{g_0}(\gn{\psi})$ holds. 
By Claim \ref{CL2}, $\neg \psi \in X_{m-1} \cup Y_{i, m-1}$. 
We distinguish the following two cases: 

Case 1: $\neg \psi \in X_{m-1}$. \\
Then, there exists some $\rho \in P_{T,m-1}$ such that $\rho \tto_{m-1} \neg \psi$. 
By Claim \ref{CLh}.4, $m > p$ and hence $\neg \psi \to \neg \varphi \in P_{T, m-1}$. 
Then, we obtain $\rho \tto_{m-1} \neg \varphi$.
Hence, $\neg \varphi \in X_{m-1}$. 

Case 2: $\neg \psi \in Y_{i, m-1}$. \\
Then, there exists a $V \in \mathcal{P}(W_n)$ such that $i \prec_n V$ and $S_0(\num{j}) \tto_{m-1} \neg \psi$ for any $j \in V$. 
Since $\neg \psi \to \neg \varphi \in P_{T, m-1}$, we obtain $S_0(\num{j})\tto_{m-1} \neg \varphi$ for any $j \in V$. 
Thus, $\neg \varphi \in Y_{i, m-1}$. 

In either case, we have $\neg \varphi \in X_{m-1} \cup Y_{i, m-1}$. 
By Claim \ref{CL2} again, we conclude that $\neg \PR_{g_0}(\gn{\varphi})$ holds.

We have proved $\PA + \exists x (S_0(x)\wedge x\neq 0) \vdash \PR_{g_0}(\gn{\varphi}) \to \PR_{g_0}(\gn{\psi})$. 
Finally, by the law of excluded middle, we conclude $\PA \vdash \PR_{g_0}(\gn{\varphi}) \to \PR_{g_0}(\gn{\psi})$. 
\end{proof}

We prove that when $L = \MNF$, $\PR_{g_0}(x)$ satisfies the condition $\D{3}$. 

\begin{cl}\label{CLF}
If $L = \MNF$, then for any $\LA$-formula $\varphi$, $\PA \vdash \PR_{g_0}(\gn{\varphi}) \to \PR_{g_0}(\gn{\PR_{g_0}(\gn{\varphi})})$. 
\end{cl}
\begin{proof}
Since $\PR_{g_0}(\gn{\varphi})$ is a $\Sigma_1$ sentence, $\PA \vdash \PR_{g_0}(\gn{\varphi}) \to \Prov_T(\gn{\PR_{g_0}(\gn{\varphi})})$ holds. 
Then, we have $\PA + \neg \Prov_T(\gn{0=1}) \vdash \PR_{g_0}(\gn{\varphi}) \to \PR_{g_0}(\gn{\PR_{g_0}(\gn{\varphi})})$ by Claim \ref{CL1}.1. 
By Claim \ref{CLh}.2, $\PA + \neg \exists x (S_0(x) \land x \neq 0)$ proves $\PR_{g_0}(\gn{\varphi}) \to \PR_{g_0}(\gn{\PR_{g_0}(\gn{\varphi})})$. 

On the other hand, we show that $\PA + \exists x (S_0(x) \land x \neq 0)$ also proves $\PR_{g_0}(\gn{\varphi}) \to \PR_{g_0}(\gn{\PR_{g_0}(\gn{\varphi})})$. 
We reason in $\PA + \exists x (S_0(x) \land x \neq 0)$: 
Let $m$, $i \neq 0$, and $n$ be such that $h_0(m) = 0$, $h_0(m+1) = i$, and $i \in W_n$. 
If $i \Vdash_n \Box \bot$, then $g_0$ outputs all $\LA$-formulas, and hence $\PR_{g_0}(\gn{\varphi}) \to \PR_{g_0}(\gn{\PR_{g_0}(\gn{\varphi})})$ trivially holds. 
Then, we may assume $i \nVdash_n \Box \bot$. 
Suppose that $\neg \PR_{g_0}(\gn{\PR_{g_0}(\gn{\varphi})})$ holds. 
By Claim \ref{CL2}, $\neg \PR_{g_0}(\gn{\varphi}) \in X_{m-1} \cup Y_{i, m-1}$. 
Assume, towards a contradiction, $\neg \varphi \notin X_{m-1} \cup Y_{i, m-1}$. 
We distinguish the following cases. 

Case 1: $\neg \PR_{g_0}(\gn{\varphi}) \in X_{m-1}$. \\
Then, there exists a $\psi \in P_{T, m-1}$ such that $\psi \tto_{m-1} \neg \PR_{g_0}(\gn{\varphi})$. 
Hence, $\neg \PR_{g_0}(\gn{\varphi})$ is a t.c.~of $P_{T, m-1}$. 
Then, $\neg S_0(\num{i}) \to \neg \PR_{g_0}(\gn{\varphi})$ is also a t.c.~of $P_{T, m-1}$. 
Since $\neg \varphi \notin X_{m-1} \cup Y_{i, m-1}$ and $\neg \varphi \in F_{m-1}$, we have $h_0(m) \neq 0$. 
This is a contradiction. 

Case 2: $\neg \PR_{g_0}(\gn{\varphi}) \in Y_{i, m-1}$. \\
Then, there exists a $V \in \mathcal{P}(W_n)$ such that $S_0(\num{j}) \tto_{m-1} \neg \PR_{g_0}(\gn{\varphi})$ for any $j \in V$. 
Suppose, towards a contradiction, that $\neg \varphi \in Y_{j, m-1}$ for all $j \in V$. 
Then, for each $j \in V$, there exists a $U_j \in \mathcal{P}(W_n)$ such that $j \prec_n U_j$ and $S_0(\num{l}) \tto_{m-1} \neg \varphi$ for any $l \in U_j$. 
Let $U : = \bigcup_{j \in V} U_j$. 
Since $(W_n, \prec_n)$ is transitive, we have $i \prec_n U$. 
Also, $S_0(\num{l}) \tto_{m-1} \neg \varphi$ for any $l \in U$. 
It follows that $\neg \varphi \in Y_{i, m-1}$, a contradiction. 
Therefore, we obtain that $\neg \varphi \notin Y_{j, m-1}$ for some $j \in V$. 
Then, $h_0(m) \neq 0$ because $S_0(\num{j}) \to \neg \PR_{g_0}(\gn{\varphi})$ is a t.c.~of $P_{T, m-1}$, $\neg \varphi \in F_{m-1}$, and $\neg \varphi \notin X_{m-1} \cup Y_{j, m-1}$. 
This is a contradiction. 

Therefore, $\neg \varphi \in X_{m-1} \cup Y_{i, m-1}$. 
By Claim \ref{CL2} again, we obtain that $\neg \PR_{g_0}(\gn{\varphi})$ holds. 

At last, by the law of excluded middle, we conclude $\PA \vdash \PR_{g_0}(\gn{\varphi}) \to \PR_{g_0}(\gn{\PR_{g_0}(\gn{\varphi})})$. 
\end{proof}

\begin{cl}\label{CL4}
Let $i \in W_n$. 
\begin{enumerate}
	\item For each $n$-choice function $c$, $\PA \vdash S_0(\num{i}) \to \PR_{g_0} \Bigl(\gn{ \bigvee_{i \prec_n V} S_0(\num{c(V)})} \Bigr)$. 
	Here, the empty disjunction denotes $0 = 1$. 
	
	\item For each $V \in \mathcal{P}(W_n)$ such that $i \prec_n V$, $\PA \vdash S_0(\num{i}) \to \neg \PR_{g_0} \Bigl(\gn{\neg \bigvee_{j \in V}S_0(\num{j})} \Bigr)$. 
\end{enumerate}
\end{cl}
\begin{proof}
1. Let $c$ be any $n$-choice function. 
We proceed in $\PA + S_0(\num{i})$: 
Let $m$ be a number such that $h_0(m)=0$ and $h_0(m+1)=i$. 
If $i \Vdash_n \Box \bot$, then the disjunction $\bigvee_{i \prec_n V} S_0(\num{c(V)})$ is empty and $g_0$ eventually outputs $0 = 1$, and hence $\PR_{g_0}(\gn{0=1})$ holds. 

If $i \nVdash_n \Box \bot$, then we shall show that $\neg \bigvee_{i \prec_n V} S_0(\num{c(V)}) \notin X_{m-1} \cup Y_{i, m-1}$. 

\begin{itemize}
\item Suppose $\neg \bigvee \nolimits_{i \prec_n V} S_0(\num{c(V)}) \in X_{m-1}$. 
Then, there exists some $\psi \in P_{T,m-1}$ such that $\psi \tto_{m-1} \neg \bigvee \nolimits_{i \prec_n V} S_0(\num{c(V)})$.
Since $i \nVdash_n \Box \bot$, we find a $U \in \mathcal{P}(W_n)$ such that $i \prec_n U$. 
Then, we obtain $\psi \tto_{m-1} \neg S_0(\num{c(U)})$ because $\neg \bigvee_{i \prec_n V} S_0(\num{c(V)}) \to \neg S_0(\num{c(U)})$ has a standard $T$-proof. 
We obtain that $\neg S_0(\num{c(U)})$ is a t.c.~of $P_{T, m-1}$, and this contradicts $h_0(m) = 0$. 
Therefore, $\neg \bigvee_{i \prec_n V} S_0(\num{c(V)}) \notin X_{m-1}$.

\item Suppose $\neg \bigvee_{i \prec_n V} S_0(\num{c(V)}) \in Y_{i, m-1}$. 
Then, there exists some $U \in \mathcal{P}(W_n)$ such that $i \prec_n U$ and $S_0(\num{j}) \tto_{m-1} \neg \bigvee_{i \prec_n V} S_0(\num{c(V)})$ for any $j \in U$. 
Since $c(U) \in U$, we have $S_0(\num{c(U)}) \tto_{m-1} \neg \bigvee_{i \prec_n V} S_0(\num{c(V)})$. 
Note that $U$ is a standard set because $(W_n, \prec_n)$ is a standard finite $\MN$-frame. 
Then, $\neg \bigvee_{i \prec_n V} S_0(\num{c(V)}) \to \neg S_0(\num{c(U)})$ has a standard $T$-proof, and hence we obtain $S_0(\num{c(U)}) \tto_{m-1} \neg S_0(\num{c(U)})$. 
It follows that $\neg S_0(\num{c(U)})$ is a t.c.~of $P_{T, m-1}$. 
This contradicts $h_0(m)=0$. 
We conclude $\neg \bigvee_{i \prec_n V} S_0(\num{c(V)}) \notin Y_{i, m-1}$. 
\end{itemize}
Therefore, we obtain $\neg \bigvee_{i \prec_n V} S_0(\num{c(V)}) \notin X_{m-1} \cup Y_{i, m-1}$. 
By Claim \ref{CL2}, we conclude that $\PR_{g_0} \Bigl(\gn{\bigvee_{i \prec_n V} S_0(\num{c(V)})} \Bigr)$ holds.

2. Let $V \in \mathcal{P}(W_n)$ be such that $i \prec_n V$. 
In this case, $i \nVdash_n \Box \bot$. 
We argue in $\PA$: Let $m$ be a number such that $h_0(m)=0$ and $h_0(m+1)=i$. 
For each $j' \in V$, we have $S_0(\num{j'}) \tto_{m-1} \neg \bigwedge_{j \in V} \neg S_0(\num{j})$, and thus we obtain $\neg \bigwedge_{j \in V} \neg S_0(\num{j}) \in Y_{i, m-1}$. 
By Claim \ref{CL2}, we conclude that $\neg \PR_{g_0} \Bigl(\gn{\bigwedge_{j \in V} \neg S_0(\num{j})} \Bigr)$.
\end{proof}

We define an arithmetical interpretation $f_{g_0}$ based on $\PR_{g_0}(x)$ by $f_{g_0}(p) \equiv \exists x \exists y (x \in W_y \land S_0(x) \land x \Vdash_y \gn{p})$ for each propositional variable $p$. 

\begin{cl}\label{CL5}
Let $i \in W_n$ and $A$ be any $\LB$-formula. 
\begin{enumerate}
	\item If $i \Vdash_n A$, then $\PA \vdash S_0(\num{i}) \to f_{g_0}(A)$. 

	\item If $i \nVdash_n A$, then $\PA \vdash S_0(\num{i}) \to \neg f_{g_0}(A)$. 
\end{enumerate}
\end{cl}
\begin{proof}
By induction on the construction of $A$, we prove these two statements simultaneously. 
We only prove the case $A \equiv \Box B$ for some $\LB$-formula $B$.

1. Suppose $i \Vdash_n \Box B$. 
Then, for any $V \in \mathcal{P}(W_n)$ satisfying $i \prec_n V$, there exists some $j \in V$ such that $j \Vdash_n B$.
By the induction hypothesis, we obtain $\PA \vdash S_0(\num{j})\to f_{g_0}(B)$. 
Thus, for some $n$-choice function $c$, $\PA \vdash \bigvee_{i \prec_n V}S_0(\num{c(V)}) \to f_{g_0}(B)$ holds. 
By Claim \ref{CL3}, $\PA \vdash \PR_{g_0} \Bigl(\gn{\bigvee_{i \prec_n V}S_0(\num{c(V)})} \Bigr) \to f_{g_0}(\Box B)$ holds.
Since $\PA \vdash S_0(\num{i}) \to \PR_{g_0} \Bigl(\gn{\bigvee_{i \prec_n V}S_0(\num{c(V)})} \Bigr)$ holds by Claim \ref{CL4}, we conclude that $\PA \vdash S_0(\num{i})\to f_{g_0}(\Box B)$.

2. Suppose $i \nVdash_n \square B$. 
Then, there exists some $V \in \mathcal{P}(W_n)$ such that $i \prec_n V$ and for all $j\in V$, $j \nVdash_n B$. 
By the induction hypothesis, we have $\PA \vdash \bigvee_{j\in V} S_0(\num{j}) \to \neg f_{g_0}(B)$, and hence, $\PA \vdash f_{g_0}(B) \to \bigwedge_{j\in V}\neg S_0(\num{j})$. 
By Claim \ref{CL3}, $\PA \vdash f_{g_0}(\Box B)\to \PR_{g_0} \Bigl(\gn{\bigwedge_{j\in V}\neg S_0(\num{j})} \Bigr)$ holds. 
By Claim \ref{CL4}, $\PA \vdash S_0(\num{i}) \to \neg \PR_{g_0} \Bigl(\gn{\bigwedge_{j\in V}\neg S_0(\num{j})} \Bigr)$ holds. 
Therefore, we conclude that $\PA \vdash S_0(\num{i}) \to \neg f_{g_0}(\Box B)$.
\end{proof}

We finish our proof of Theorem \ref{UACTMN}. 
The first clause of the theorem follows from Claims \ref{CL1}, \ref{CL3}, and \ref{CLF}. 

We prove the second clause.  
Suppose $L \nvdash A$. 
Then, there exist numbers $n \in \omega$ and $i \in W_n$ such that $i \nVdash_n A$. 
By Claim \ref{CL5}, $\PA \vdash S_0(\num{i}) \to \neg f_{g_0}(A)$ holds. 
If we suppose $T \vdash f_{g_0}(A)$, then $T \vdash \neg S_0(\num{i})$. 
This contradicts Proposition \ref{CLh}.3. 
Therefore, we conclude that $T\nvdash f_{g_0}(A)$. 
\end{proof}

\begin{cor}[The arithmetical completeness of $\MN$]\label{ACTMN}
For any $\LB$-formula $A$, the following are equivalent: 
	\begin{enumerate}
		\item $\MN \vdash A$. 
		\item $A \in \PL(\PR_T)$ for any provability predicate $\PR_T(x)$ of $T$ satisfying $\M$. 
		\item $A \in \PL(\PR_T)$ for any $\Sigma_1$ provability predicate $\PR_T(x)$ of $T$ satisfying $\M$. 
	\end{enumerate}
Moreover, there exists a $\Sigma_1$ provability predicate $\PR_T(x)$ of $T$ satisfying $\M$ such that $\PL(\PR_T) = \MN$. 
\end{cor}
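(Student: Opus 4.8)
The plan is to read off everything from Theorem~\ref{UACTMN} together with the arithmetical soundness of $\MN$; the corollary is essentially a repackaging of that theorem, so no new construction is needed.

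\textbf{The equivalences.} The implication $(1 \Rightarrow 2)$ is precisely the arithmetical soundness of $\MN$ stated at the beginning of this section: if $\PR_T(x)$ satisfies $\M$, then $\MN \subseteq \PL(\PR_T)$, so $\MN \vdash A$ gives $A \in \PL(\PR_T)$ for every such $\PR_T(x)$. The implication $(2 \Rightarrow 3)$ is trivial, since every $\Sigma_1$ provability predicate is in particular a provability predicate. For $(3 \Rightarrow 1)$ I would apply Theorem~\ref{UACTMN} with $L = \MN$: it furnishes a specific $\Sigma_1$ provability predicate $\PR_T(x)$ satisfying $\M$ and a distinguished arithmetical interpretation $f$ based on it with $\MN \vdash A \iff T \vdash f(A)$. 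Assuming (3), the formula $A$ belongs to $\PL(\PR_T)$ for this particular $\PR_T(x)$, which means $T \vdash g(A)$ for \emph{every} arithmetical interpretation $g$ based on $\PR_T(x)$, and in particular for the distinguished $f$; hence $T \vdash f(A)$, and clause~2 of Theorem~\ref{UACTMN} yields $\MN \vdash A$.

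\textbf{The ``moreover'' clause.} I take the same $\Sigma_1$ provability predicate $\PR_T(x)$ provided by Theorem~\ref{UACTMN} for $L = \MN$. The inclusion $\MN \subseteq \PL(\PR_T)$ is immediate either from the arithmetical soundness of $\MN$ (as $\PR_T(x)$ satisfies $\M$) or directly from clause~1 of Theorem~\ref{UACTMN}, which gives $\PA \vdash f(A)$, hence $T \vdash f(A)$, for all interpretations $f$ whenever $\MN \vdash A$. The reverse inclusion $\PL(\PR_T) \subseteq \MN$ follows from clause~2: if $A \in \PL(\PR_T)$, then $T \vdash f(A)$ for the distinguished interpretation $f$, and therefore $\MN \vdash A$. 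Combining the two inclusions gives $\PL(\PR_T) = \MN$.

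\textbf{Main obstacle.} There is no real obstacle here: all the substance is already contained in Theorem~\ref{UACTMN}. The only point needing a moment's care is the quantifier bookkeeping in $(3 \Rightarrow 1)$ and in the reverse inclusion of the ``moreover'' clause --- one must observe that $A \in \PL(\PR_T)$ quantifies universally over all arithmetical interpretations based on $\PR_T(x)$, so it in particular applies to the single distinguished interpretation witnessing completeness, which is exactly what lets us invoke clause~2 of Theorem~\ref{UACTMN}.
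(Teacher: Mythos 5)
Your proposal is correct and is exactly the intended derivation: the paper states Corollary \ref{ACTMN} without proof precisely because it follows from the arithmetical soundness of $\MN$ together with clauses 1 and 2 of Theorem \ref{UACTMN} in the way you describe. The quantifier bookkeeping you flag (that membership in $\PL(\PR_T)$ in particular applies to the distinguished interpretation $f_{g_0}$) is the only point of care, and you handle it correctly.
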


\begin{cor}[The arithmetical completeness of $\MNF$]\label{ACTMNF}
For any $\LB$-formula $A$, the following are equivalent: 
	\begin{enumerate}
		\item $\MNF \vdash A$. 
		\item $A \in \PL(\PR_T)$ for any provability predicate $\PR_T(x)$ of $T$ satisfying $\M$ and $\D{3}$. 
		\item $A \in \PL(\PR_T)$ for any $\Sigma_1$ provability predicate $\PR_T(x)$ of $T$ satisfying $\M$ and $\D{3}$. 
	\end{enumerate}
Moreover, there exists a $\Sigma_1$ provability predicate $\PR_T(x)$ of $T$ satisfying $\M$ such that $\PL(\PR_T) = \MNF$. 
\end{cor}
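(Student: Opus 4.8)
The plan is to read off Corollary \ref{ACTMNF} from Theorem \ref{UACTMN} applied with $L = \MNF$, supplemented by the arithmetical soundness of $\MNF$. For the implication $(1 \Rightarrow 2)$ I would argue: if $\MNF \vdash A$ and $\PR_T(x)$ is any provability predicate of $T$ satisfying $\M$ and $\D{3}$, then $\MNF \subseteq \PL(\PR_T)$ by the soundness proposition for $\MNF$, so $A \in \PL(\PR_T)$. The implication $(2 \Rightarrow 3)$ is immediate, since every $\Sigma_1$ provability predicate is in particular a provability predicate, so the class quantified over in clause 3 is a subclass of that in clause 2.

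For $(3 \Rightarrow 1)$ I would prove the contrapositive. Assume $\MNF \nvdash A$. Applying Theorem \ref{UACTMN} with $L = \MNF$ yields a $\Sigma_1$ provability predicate $\PR_T(x)$ satisfying $\M$ and an arithmetical interpretation $f$ based on it such that $T \vdash f(B) \iff \MNF \vdash B$ for all $\LB$-formulas $B$. Here one must use not only the statement of Theorem \ref{UACTMN} but its proof: the predicate produced is $\PR_{g_0}(x)$, and Claim \ref{CLF} shows that when $L = \MNF$ it additionally satisfies $\D{3}$. Since $\MNF \nvdash A$, we get $T \nvdash f(A)$, hence $A \notin \PL(\PR_T)$; thus clause 3 fails for this particular $\Sigma_1$ provability predicate satisfying $\M$ and $\D{3}$.

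For the ``Moreover'' part I would take that same $\PR_T(x) = \PR_{g_0}(x)$ together with the interpretation $f_{g_0}$ from Theorem \ref{UACTMN} for $L = \MNF$. It satisfies $\M$ by Claim \ref{CL3} and $\D{3}$ by Claim \ref{CLF}. The inclusion $\MNF \subseteq \PL(\PR_T)$ is clause 1 of Theorem \ref{UACTMN}: if $\MNF \vdash A$ then $\PA \vdash g(A)$, hence $T \vdash g(A)$, for every arithmetical interpretation $g$ based on $\PR_T(x)$. For the converse inclusion, if $A \in \PL(\PR_T)$ then in particular $T \vdash f_{g_0}(A)$, so $\MNF \vdash A$ by clause 2. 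Therefore $\PL(\PR_T) = \MNF$.

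The main obstacle is essentially absent: all the genuine work --- the double-recursion construction of $\PR_{g_0}$, the verification of $\M$ and (for $L = \MNF$) $\D{3}$, and the Solovay-style embedding Claims \ref{CL4} and \ref{CL5} --- has already been carried out inside the proof of Theorem \ref{UACTMN}. The only point requiring mild care is that clauses 2 and 3 of the corollary restrict attention to predicates satisfying $\D{3}$, so one must invoke Claim \ref{CLF} to certify that the witness produced by Theorem \ref{UACTMN} indeed belongs to that class when $L = \MNF$.
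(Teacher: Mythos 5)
Your proposal is correct and is exactly the intended derivation: the paper states Corollary \ref{ACTMNF} as an immediate consequence of Theorem \ref{UACTMN} with $L = \MNF$, with soundness giving $(1 \Rightarrow 2)$, the inclusion of classes giving $(2 \Rightarrow 3)$, and the constructed $\PR_{g_0}(x)$ with $f_{g_0}$ giving $(3 \Rightarrow 1)$ and the ``Moreover'' clause. You also correctly flag the one non-trivial point, namely that Claim \ref{CLF} (rather than the bare statement of the theorem) is what certifies that the witness predicate satisfies $\D{3}$ and hence lies in the class quantified over in clauses 2 and 3.
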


\section{Arithmetical completeness of $\MNP$ and $\MNPF$}\label{Sec:MNP}

In this section, we prove the arithmetical completeness theorems for $\MNP$ and $\MNPF$ with respect to Rosser provability predicates. 
It is easily shown that $\MNP$ is arithmetically sound with respect to Rosser provability predicates satisfying the condition $\M$. 

\begin{prop}[The arithmetical soundness of $\MNP$ and $\MNPF$]
Let $\PR_T^{\mathrm{R}}(x)$ be a Rosser provability predicate of $T$ satisfying $\M$. 
Then, $\MNP \subseteq \PL(\PR_T^{\mathrm{R}})$ holds. 
Furthermore, $\PR_T^{\mathrm{R}}(x)$ satisfies $\D{3}$ if and only if $\MNPF \subseteq \PL(\PR_T^{\mathrm{R}})$. 
\end{prop}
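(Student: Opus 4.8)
The plan is to obtain this proposition as an easy consequence of the arithmetical soundness of $\MN$ and $\MNF$ established in Section~\ref{Sec:MN}, adding only one new ingredient: the status of the axiom $\mathrm{P}$. Since every Rosser's provability predicate is in particular a $\Sigma_1$ provability predicate of $T$, that earlier soundness result applies directly to $\PR_T^{\mathrm{R}}(x)$, which satisfies $\M$ by hypothesis; it gives both $\MN \subseteq \PL(\PR_T^{\mathrm{R}})$ and the equivalence that $\PR_T^{\mathrm{R}}(x)$ satisfies $\D{3}$ if and only if $\MNF \subseteq \PL(\PR_T^{\mathrm{R}})$. So all that is left is to locate $\mathrm{P}$ inside $\PL(\PR_T^{\mathrm{R}})$, and this is the only step that uses Rosser-ness.

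First I would observe that for every arithmetical interpretation $f$ based on $\PR_T^{\mathrm{R}}(x)$, the sentence $f(\neg \Box \bot)$ is literally $\neg \PR_T^{\mathrm{R}}(\gn{0=1})$, independently of $f$. Since it is known that $\PA \vdash \neg \PR_T^{\mathrm{R}}(\gn{0=1})$ for every Rosser's provability predicate of $T$ (cf.~\cite{Kre}) and $T$ extends $\PA$, we get $T \vdash f(\mathrm{P})$ for all such $f$, that is, $\mathrm{P} \in \PL(\PR_T^{\mathrm{R}})$. Next I would recall the routine closure properties of $\PL(\PR_T^{\mathrm{R}})$: it contains all $\LB$-tautologies and is closed under \textsc{MP} (because $T$ is), under \textsc{Nec} (because every provability predicate satisfies $\D{1}$), under \textsc{RM} (this is precisely the condition $\M$, since $f(\Box A \to \Box B)$ is $\PR_T^{\mathrm{R}}(\gn{f(A)}) \to \PR_T^{\mathrm{R}}(\gn{f(B)})$), and under uniform substitution (a substitution instance $f(\sigma A)$ equals $(f \circ \sigma)(A)$ for the arithmetical interpretation $f \circ \sigma$). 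Since $\MNP$ is the smallest logic containing $\MN$ together with $\mathrm{P}$ and closed under these rules, $\MN \subseteq \PL(\PR_T^{\mathrm{R}})$ and $\mathrm{P} \in \PL(\PR_T^{\mathrm{R}})$ give $\MNP \subseteq \PL(\PR_T^{\mathrm{R}})$.

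For the ``furthermore'' clause I would use $\MNPF = \MNF + \mathrm{P}$ and $\MNF \subseteq \MNPF$. If $\PR_T^{\mathrm{R}}(x)$ satisfies $\D{3}$, then $\MNF \subseteq \PL(\PR_T^{\mathrm{R}})$ by the $\MNF$ soundness result, and adjoining $\mathrm{P} \in \PL(\PR_T^{\mathrm{R}})$ together with the closure properties above yields $\MNPF \subseteq \PL(\PR_T^{\mathrm{R}})$. Conversely, if $\MNPF \subseteq \PL(\PR_T^{\mathrm{R}})$ then a fortiori $\MNF \subseteq \PL(\PR_T^{\mathrm{R}})$, and the same soundness result forces $\PR_T^{\mathrm{R}}(x)$ to satisfy $\D{3}$. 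I do not anticipate a genuine obstacle here; the only mildly delicate point --- recovering $\D{3}$ for an arbitrary $\LA$-formula $\varphi$ from the schematic statement $\Box p \to \Box \Box p \in \PL(\PR_T^{\mathrm{R}})$ by choosing an interpretation sending the variable $p$ to $\varphi$ --- is already handled inside the $\MNF$ case.
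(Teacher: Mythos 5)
Your proposal is correct and is essentially the argument the paper has in mind: the paper states this proposition without proof (``it is easily shown''), and the intended justification is exactly what you give, namely reducing to the $\MN$/$\MNF$ soundness proposition plus the classical fact that $\PA \vdash \neg \PR_T^{\mathrm{R}}(\gn{0=1})$ for every Rosser's provability predicate, which yields $f(\mathrm{P})$ uniformly since $f(\neg\Box\bot)$ does not depend on $f$. The closure-under-rules induction and the two-way use of the $\MNF$ equivalence for the $\D{3}$ clause are exactly as they should be.
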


As in the last section, we prove the following uniform version of the arithmetical completeness theorem. 

\begin{thm}[The uniform arithmetical completeness of $\MNP$ and $\MNPF$]\label{UACTMNP}
For $L \in \{\MNP, \MNPF\}$, there exists a Rosser provability predicate $\PR_T^{\mathrm{R}}(x)$ of $T$ satisfying $\M$ such that
\begin{enumerate}
	\item for any $\LB$-formula $A$ and any arithmetical interpretation $f$ based on $\PR_T^{\mathrm{R}}(x)$, if $L \vdash A$, then $\PA \vdash f(A)$, and
	\item there exists an arithmetical interpretation $f$ based on $\PR_T^{\mathrm{R}}(x)$ such that for any $\LB$-formula $A$, $L \vdash A$ if and only if $T \vdash f(A)$. 
\end{enumerate}
\end{thm}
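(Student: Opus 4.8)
The plan is to adapt the construction of Theorem \ref{UACTMN} to the Rosser setting, incorporating the modal axiom $\mathrm{P}$: $\neg \Box \bot$. Recall from Proposition \ref{FC} that $\mathrm{P}$ is valid exactly on $\MNP$-frames, i.e. frames in which every point $x$ has \emph{some} $V$ with $x \prec V$. By Theorem \ref{FFP}, for each $L$-unprovable $A$ we obtain a finite $\MNP$-model (or $\MNPF$-model) $(W_n, \prec_n, \Vdash_n)$ falsifying $A$; the crucial new feature is that no point forces $\Box \bot$, so in the arithmetization the ``catastrophic'' branch where $g_0$ outputs every formula (which in the previous proof was triggered by $i \Vdash_n \Box \bot$) never arises. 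As before, we enumerate the $L$-unprovable formulas, take the $W_n$ pairwise disjoint with union $\omega \setminus \{0\}$, fix $n$-choice functions, and define by the double recursion theorem functions $h_0$ and $g_0$ together with a primitive recursive proof relation $\Prf^{\mathrm{R}}_{g_0}(x,y)$. The definition of $h_0$ will be essentially as in Theorem \ref{UACTMN}, and $g_0$ will run Procedure 1 (enumerating $T$-proofs) until $h_0$ becomes nonzero at some $i \in W_n$, at which point Procedure 2 takes over and outputs formulas $\xi_t$ subject to the constraint $\neg \xi_t \notin X_{m-1} \cup Y_{i,m-1}$.

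The key structural difference is that the target predicate must be a \emph{Rosser} predicate, so it has to be presented in the form $\exists y\bigl(\Fml(x) \land \Prf_T(x,y) \land \forall z < y\, \neg \Prf_T(\dot\neg(x), z)\bigr)$ for a primitive recursive $\Prf_T(x,y)$ meeting conditions (1) and (2) of the definition of Rosser's provability predicate. So the plan is: define $\Prf^{\mathrm{R}}_{g_0}(x,y)$ so that, on proofs coming from Procedure 1, it agrees with the standard $\Proof_T$, and in Procedure 2 it records the newly enumerated formulas; then the Rosser predicate $\PR_T^{\mathrm{R}}(x)$ is obtained by the usual witness-comparison twist against $\dot\neg$. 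One must check condition (1) (agreement with $\Proof_T$ on standard numerals), which follows exactly as in Claim \ref{CL1}.2 since for each fixed $m$, $\PA$ proves $h_0(\num m) = 0$ so the standard part of $g_0$ never switches to Procedure 2; and condition (2) ($\exists y\,\Prf_T(x,y) \leftrightarrow \Prov_T(x)$ for $\LA$-formulas $x$), which follows as in Claim \ref{CL1}.1 from the fact that $g_0$ deviates from enumerating $T$-theorems only under $\exists x(S_0(x)\land x\neq 0)$, equivalently $\Prov_T(\gn{0=1})$. The verification of $\M$ for $\PR_T^{\mathrm{R}}(x)$ proceeds as in Claim \ref{CL3}: under $\neg\Prov_T(\gn{0=1})$ it coincides with $\Prov_T$, and under $\exists x(S_0(x)\land x\neq0)$ one pushes membership in $X_{m-1}\cup Y_{i,m-1}$ forward along a standard proof of $\neg\psi\to\neg\varphi$, now without needing to separate out the $i\Vdash_n\Box\bot$ case since it cannot occur. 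When $L = \MNPF$ one additionally checks $\D{3}$ exactly as in Claim \ref{CLF}, using transitivity of the $\MNPF$-frames.

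Then, as in Claims \ref{CL4} and \ref{CL5}, I would establish the ``truth lemma'' for the arithmetical interpretation $f^{\mathrm{R}}_{g_0}(p) \equiv \exists x\exists y(x\in W_y \land S_0(x) \land x \Vdash_y \gn{p})$: for $i \in W_n$ and any $\LB$-formula $B$, if $i \Vdash_n B$ then $\PA \vdash S_0(\num i) \to f^{\mathrm{R}}_{g_0}(B)$, and if $i \nVdash_n B$ then $\PA \vdash S_0(\num i) \to \neg f^{\mathrm{R}}_{g_0}(B)$. The $\Box$-step uses the two halves of the analogue of Claim \ref{CL4}: positively, $\PA \vdash S_0(\num i) \to \PR_T^{\mathrm{R}}\bigl(\gn{\bigvee_{i\prec_n V}S_0(\num{c(V)})}\bigr)$ for an $n$-choice function $c$; negatively, $\PA \vdash S_0(\num i) \to \neg\PR_T^{\mathrm{R}}\bigl(\gn{\bigwedge_{j\in V}\neg S_0(\num j)}\bigr)$ whenever $i \prec_n V$. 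Because $(W_n,\prec_n)$ is an $\MNP$-frame, every $i$ does have some $V$ with $i \prec_n V$, so in the negative clause the relevant disjunction/conjunction is never vacuous and $\mathrm{P}$ is itself interpreted into a $\PA$-theorem; this is what makes the soundness half ($L \vdash A \Rightarrow \PA \vdash f(A)$) go through for $\MNP$. Given the truth lemma, the completeness half is immediate: if $L \nvdash A$, pick $i \in W_n$ with $i \nVdash_n A$, so $\PA \vdash S_0(\num i) \to \neg f^{\mathrm{R}}_{g_0}(A)$; were $T \vdash f^{\mathrm{R}}_{g_0}(A)$ we would get $T \vdash \neg S_0(\num i)$, contradicting the analogue of Claim \ref{CLh}.3 ($T \nvdash \neg S_0(\num i)$ for $i \neq 0$).

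I expect the main obstacle to be the bookkeeping around the witness-comparison clause of the Rosser predicate, specifically verifying that the ``negative'' estimate $\PA \vdash S_0(\num i) \to \neg\PR_T^{\mathrm{R}}\bigl(\gn{\bigwedge_{j\in V}\neg S_0(\num j)}\bigr)$ is not sabotaged by a small proof of the \emph{negation} $\bigvee_{j\in V} S_0(\num j)$ appearing first; one has to argue, using Claim \ref{CLh}.1 (the $S_0$ values are strictly increasing past $0$) and the reflexivity/transitivity of $\tto_{m-1}$, that such a competing proof would force $h_0(m) \neq 0$, contradicting the standing assumption $h_0(m) = 0$. The rest is a faithful, if laborious, transcription of Section \ref{Sec:MN} with the single simplification that the branch $i \Vdash_n \Box\bot$ is vacuous.
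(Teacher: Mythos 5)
There is a genuine gap, and it sits exactly where you locate the ``main obstacle'': the design of Procedure 2. You propose to keep the $\MN$-style rule ``output $\xi_t$ only if $\neg \xi_t \notin X_{m-1} \cup Y_{i, m-1}$'' and then wrap the resulting $\Prf$ in the Rosser witness-comparison form. This cannot yield a Rosser's provability predicate in the sense required by the theorem. Condition (2) of that definition demands $\PA \vdash \forall x \bigl(\Fml(x) \to (\exists y \Prf(x,y) \leftrightarrow \Prov_T(x))\bigr)$ \emph{outright}, not merely under $\neg \Prov_T(\gn{0=1})$; but under $\exists x(S_0(x) \land x \neq 0)$ (equivalently, $T$ inconsistent, so $\Prov_T(x)$ holds of every formula) your $g_0$ deliberately withholds every $\varphi$ with $\neg\varphi \in X_{m-1} \cup Y_{i,m-1}$, so the equivalence provably fails on that branch. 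Your appeal to Claim \ref{CL1}.1 overlooks that it only establishes the equivalence under the hypothesis $\neg\Prov_T(\gn{0=1})$. More fundamentally, you are mixing two incompatible mechanisms: in the non-Rosser construction of Theorem \ref{UACTMN}, \emph{omission} from the enumeration is what makes $\neg\PR_{g_0}(\gn{\varphi})$ hold (Claim \ref{CL2}); for a genuine Rosser predicate the enumeration must eventually list every formula, and the only lever you have is the \emph{order} in which $\varphi$ and $\neg\varphi$ appear.

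The missing idea is the paper's redesigned Procedure 2 for $g_1$: first output all of $X_{m-1}$, then all of $Y_{i,m-1}$ \emph{in descending order of G\"odel numbers}, and then output every $\LA$-formula $\xi_t$. The unconditional tail secures condition (2) (Claim \ref{PCL1}.1, provable in $\PA$ by excluded middle), while the ordering, combined with the convention that $\gn{\varphi} < \gn{\neg\varphi}$, is what makes the witness comparison deliver the key equivalence $\neg\varphi \in X_{m-1} \cup Y_{i,m-1} \iff \neg\PR_{g_1}^{\mathrm{R}}(\gn{\varphi})$ (Claim \ref{PCL2}); one must check separately that $\varphi$ does not sneak out earlier via $X_{m-1}$, which is where $h_1(m)=0$ is used. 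Once that equivalence is in place, the rest of your outline (the analogues of Claims \ref{CL3}--\ref{CL5}, the $\D{3}$ verification for $\MNPF$ via transitivity, and the observation that no world of an $\MNP$-model forces $\Box\bot$) does go through essentially as you describe. Note also that Claim \ref{PCLh}.2, Case 2, needs a new argument in the Rosser setting: one must verify that $\varphi$ is output before any output of $\neg\varphi$ using the explicit positions $g_1(m+k+k'+s)$ and $g_1(m+k+k'+u)$, which again depends on the ordering you have not specified.
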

\begin{proof}
Let $L \in \{\MNP, \MNPF\}$. 
By the proof of Theorem \ref{FFP}, we obtain a primitive recursively representable enumeration $\langle (W_n, \prec_n, \Vdash_n) \rangle_{n \in \omega}$ of pairwise disjoint finite $L$-models such that $W = \bigcup_{n \in \omega} W_n = \omega \setminus \{0\}$ and for any $\LB$-formula $A$, if $\MNP \nvdash A$, then there exist $n \in \omega$ and $i \in W_n$ such that $i \nVdash_n A$. 

As in the proof of Theorem \ref{UACTMN}, we simultaneously define primitive recursive functions $h_1$ and $g_1$ corresponding to Theorem \ref{UACTMNP} by using the double recursion theorem. 
Firstly, we define the function $h_1$. 
In the definition of $h_1$, the formula $\PR_{g_1}^{\mathrm{R}}(x) \equiv \exists y \bigl(\Fml(x) \land x = g_1(y) \wedge \forall z < y\, \dot{\neg}(x) \neq g_1(z) \bigr)$ is used. 
In fact, the definition of $h_1$ is exactly the same as that of $h_0$ defined in the proof of Theorem \ref{UACTMN} except that Rosser's predicate $\PR_{g_1}^{\mathrm{R}}(x)$ is used instead of the usual one. 

\begin{itemize}
	\item $h_1(0) = 0$. 
	\item $h_1(m+1) = \begin{cases}
				i & \text{if}\ h_1(m) = 0\\
					& \&\ i = \min \bigl\{j \in \omega \setminus \{0\} \mid \neg S_1(\num{j}) \ \text{is a t.c.~of}\ P_{T, m}\\
					& \quad\ \text{or}\ \exists \varphi \bigl[S_1(\num{j}) \to \neg \PR_{g_1}^{\mathrm{R}}(\gn{\varphi})\ \text{is a t.c.~of}\ P_{T, m}\\
					& \quad \quad \quad \& \ \neg \varphi \in F_m\ \&\ \neg \varphi \notin X_m \cup Y_{j, m} \bigr]\bigr\}\\
			h_1(m) & \text{otherwise}.
		\end{cases}$
\end{itemize}
Here, $S_1(x)$ is the $\Sigma_1$ formula $\exists y(h_1(y) = x)$. 
Also, for each $j \in W_n$ and number $m$, $X_m$ and $Y_{j, m}$ are sets defined as follows: 
\begin{itemize}
	\item $X_m : = \{\varphi \in F_m \mid \exists \psi \in P_{T, m}$ s.t.~$\psi \tto_m \varphi\}$. 
	\item $Y_{j, m} : = \{\varphi \in F_m \mid \exists V \in \mathcal{P}(W_n)$ s.t.~$j \prec_n V$ and $\forall l \in V\, (S_1(\num{l}) \tto_m \varphi)\}$. 
\end{itemize}

Secondly, we define the function $g_1$. 
The definition consists of Procedures 1 and 2, and Procedure 1 is exactly same as that of $g_0$. 
So, we only give the definition of Procedure 2.  

\vspace{0.1in}

\textsc{Procedure 2}

Suppose $m$ and $i \neq 0$ satisfy $h_1(m)=0$ and $h_1(m+1)=i$. 
Let $\chi_0, \chi_1, \ldots, \chi_{k-1}$ be an enumeration of all elements of $X_{m-1}$. 
For $l<k$, we define 
\begin{equation*}
	g_1(m+l)=\chi_l. 
\end{equation*} 
Let $\chi'_0, \chi'_1, \ldots, \chi'_{k'-1}$ be the enumeration of all elements of $Y_{i, m-1}$ in descending order of G\"{o}del numbers. 
For $l<k'$, we define 
\begin{equation*}
	g_1(m+k+l)=\chi'_{l}.  
\end{equation*}
For any $t$, we define
\begin{equation*}
g_1(m+k+k'+t)=\xi_{t}.
\end{equation*}

The formulas $\Prf_{g_1}(x,y)$, $\PR_{g_1}(x)$, and $\PR_{g_1}^{\mathrm{R}}(x)$ are defined as before. 
The following claim holds for $h_1$ as well as for the function $h_0$.

\begin{cl}\label{PCLh}
\leavevmode
\begin{enumerate}
	\item $\PA \vdash \forall x \forall y(0 < x < y \land S_1(x) \to \neg S_1(y))$. 
	\item $\PA \vdash \Prov_T(\gn{0=1}) \leftrightarrow \exists x(S_1(x) \land x \neq 0)$. 
	\item For each $i \in \omega \setminus \{0\}$, $T \nvdash \neg S_1(\num{i})$. 
	\item For each $m \in \omega$, $\PA \vdash \forall x \forall y(h_1(x) = 0 \land h_1(x+1) = y \land y \neq 0 \to x > \num{m})$.
\end{enumerate}
\end{cl}
\begin{proof}
The use of the Rosser provability predicate $\PR_{g_1}^{\mathrm{R}}(x)$ is the point where the definition of the function $h_1$ differs from that of $h_0$. 
Since that difference affects the proof of clause 2 of this claim, we prove only clause 2. 
The implication $(\to)$ is easy, and so we prove $(\leftarrow)$. 
We proceed in $\PA$: 
Suppose that $S_1(i)$ holds for some $i \neq 0$. 
Let $m$ and $n$ be such that $h_1(m) = 0$, $h_1(m+1) = i$, and $i \in W_n$. 
We would like to show the inconsistency of $T$. 
By the definition of $h_1$, we distinguish the following two cases: 

Case 1: $\neg S_1(\num{i})$ is a t.c.~of $P_{T, m}$. \\
	Then, $\neg S_1(\num{i})$ is $T$-provable. 
	On the other hand, $S_1(\num{i})$ is $T$-provable because it is a true $\Sigma_1$ sentence. 
	Therefore, $T$ is inconsistent. 

Case 2: There exists a $\varphi$ such that $S_1(\num{i}) \to \neg \PR_{g_1}^{\mathrm{R}}(\gn{\varphi})$ is a t.c.~of $P_{T, m}$, $\neg \varphi \in F_m$, and $\neg \varphi \notin X_m \cup Y_{i, m}$. \\
	Let $s$ and $u$ be numbers such that $\xi_s \equiv \varphi$ and $\xi_u \equiv \neg \varphi$. 
	Then, $g_1(m + k + k' + s) = \varphi$ and $g_1(m + k + k' + u) = \neg \varphi$, where $k$ and $k'$ are the cardinalities of $X_{m-1}$ and $Y_{i, m-1}$, respectively.  
	Since the G\"odel number of $\varphi$ is smaller than that of $\neg \varphi$, we have $s < u$. 
	Since the relation $\tto_{m}$ is reflexive, we have that $P_{T, m} \subseteq X_{m}$. 
	It follows $\neg \varphi \notin P_{T, m}$ because $\neg \varphi \notin X_{m}$, and hence $\neg \varphi$ is not output in Procedure 1. 
	Since $\neg \varphi \notin X_{m-1} \cup Y_{i, m-1}$, by the definition of Procedure 2, we have that $g_1(m + k + k' + u)$ is the first output of $\neg \varphi$ by $g_1$. 
	Thus, $\varphi$ is output before any output of $\neg \varphi$, that is, $\PR_{g_1}^{\mathrm{R}}(\gn{\varphi})$ holds. 
	Then, $S_1(\num{i}) \land \PR_{g_1}^{\mathrm{R}}(\gn{\varphi})$ is a true $\Sigma_1$ sentence, and so it is provable in $T$.
	Since $S_1(\num{i}) \to \neg \PR_{g_1}^{\mathrm{R}}(\gn{\varphi})$ is also $T$-provable, $T$ is inconsistent.
\end{proof}

\begin{cl}\label{PCL1}\leavevmode
\begin{enumerate}
\item $\PA \vdash \forall x \bigl(\Prov_{T}(x) \leftrightarrow \PR_{g_1}(x) \bigr)$.
\item For any $n \in \omega$ and any $\LA$-formula $\varphi$, $\PA \vdash \Proof_{T}(\gn{\varphi},\num{n}) \leftrightarrow \Prf_{g_1}(\gn{\varphi},\num{n})$.
\end{enumerate}
\end{cl}
\begin{proof}
1. By the definition of $g_1$, 
\[
	\PA \vdash \neg \exists x(S_1(x) \wedge x \neq 0) \rightarrow \forall x \bigl(\Prov_{T}(x)\leftrightarrow \PR_{g_1}(x) \bigr)
\]
holds.   
Since $g_1$ outputs all formulas in Procedure 2, 
\[
	\PA \vdash \exists x(S_1(x) \wedge x \neq 0) \rightarrow \forall x \bigl(\Fml(x) \leftrightarrow \PR_{g_1}(x) \bigr). 
\]
Also, $\PA \vdash \Prov_T(\gn{0=1}) \rightarrow \forall x \bigl(\Prov_{T}(x) \leftrightarrow \Fml(x) \bigr)$. 
Since $\Prov_T(\gn{0=1})$ and $\exists x (S_1(x) \land x \neq 0)$ are equivalent in $\PA$ by Proposition \ref{PCLh}.2, we obtain 
\[
	\PA \vdash \exists x(S_1(x) \wedge x \neq 0) \rightarrow \forall x \bigl(\Prov_{T}(x) \leftrightarrow \PR_{g_1}(x) \bigr).
\]
By the law of excluded middle, we conclude $\PA \vdash \forall x \bigl(\Prov_{T}(x) \leftrightarrow \PR_{g_1}(x) \bigr)$.

2. This is proved as in the proof of Claim \ref{CL1}.2. 
\end{proof}

Then, our $\Sigma_1$ formula $\PR_{g_1}^{\mathrm{R}}(x)$ is a Rosser provability predicate of $T$. 

\begin{cl}\label{PCL2}
$\PA$ proves the following statement:
``Suppose $m$ and $i \neq 0$ satisfy $h_1(m)=0$ and $h_1(m+1)=i$. 
Then, for any $\LA$-formula $\varphi$, $\neg \varphi \in X_{m-1} \cup Y_{i, m-1}$ if and only if $\neg \PR_{g_1}^{\mathrm{R}}(\gn{\varphi})$ holds".
\end{cl}
\begin{proof}
We argue in $\PA$: Let $m$ and $i \neq 0$ satisfy $h_1(m)=0$ and $h_1(m+1)=i$.  

$(\Rightarrow)$: Suppose $\neg \varphi \in X_{m-1} \cup Y_{i, m-1}$. 
We show that $g_1$ outputs $\neg \varphi$ before it outputs $\varphi$. 
We distinguish the following two cases: 

Case 1: $\neg \varphi \in X_{m-1}$. \\
In this case, we have $\neg \varphi \in \{g_1(m), \ldots, g_1(m + k-1)\}$, where $k$ is the cardinality of $X_{m-1}$. 
We would like to show $\varphi \notin \{g_1(0), g_1(1), \ldots, g_1(m + k-1)\}$, that is, $\varphi \notin P_{T,m-1} \cup X_{m-1}$. 
Since $P_{T, m-1} \subseteq X_{m-1}$, it suffices to show $\varphi \notin X_{m-1}$. 
Suppose, towards a contradiction, that $\varphi \in X_{m-1}$. 
Then, there exists some $\psi \in P_{T,m-1}$ such that $\psi \tto_{m-1} \varphi$. 
Since $\neg \varphi \in X_{m-1}$, there exists some $\rho \in P_{T, m-1}$ such that $\rho \tto_{m-1} \neg \varphi$.
Then, $P_{T, m-1}$ is not propositionally satisfiable, and hence $\neg S_1(\num{1})$ is a t.c.~of $P_{T, m-1}$. 
This contradicts $h_1(m)=0$. 
Hence $\varphi \notin X_{m-1}$. 

Case 2: $\neg \varphi \in Y_{i, m-1}$. \\
Firstly, we show $\varphi \notin X_{m-1}$. 
Suppose, towards a contradiction, that $\varphi \in X_{m-1}$. 
Then, $\psi \tto_{m-1} \varphi$ holds for some $\psi \in P_{T,m-1}$. 
Let $n$ be such that $i \in W_n$. 
Since $\neg \varphi \in Y_{i, m-1}$, there exists some $V \in \mathcal{P}(W_n)$ such that $i \prec_n V$ and $S_1(\num{j}) \tto_{m-1} \neg \varphi$ for any $j \in V$. 
Since $V$ is non-empty, we find some $j_0 \in V$, and hence $S_1(\num{j_0}) \tto_{m-1} \neg \varphi$ holds. 
Thus, $\neg S_1(\num{j_0})$ is a t.c.~of $P_{T, m-1}$, and this contradicts $h_1(m)=0$. 
Therefore, we obtain $\varphi \notin X_{m-1}$, and hence $\varphi  \notin P_{T,m-1}\cup X_{m-1}$. 
This means that $\varphi \notin \{g_1(0), \ldots, g_1(m+k-1)\}$. 

Since $\neg \varphi \in Y_{i, m-1}$, $g_1(m + k + l) = \neg \varphi$ for some $l < k'$, where $k'$ is the cardinality of $Y_{i, m-1}$. 
Thus, it suffices to show that $\varphi \notin \{g_1(m + k), \ldots, g_1(m+k+l)\}$. 
If $\varphi \notin Y_{i, m-1}$, we are done. 
If $\varphi \in Y_{i, m-1}$, then the first output of $\varphi$ is $g_1(m + k + l') = \varphi$ for some $l' > l$ because the G\"{o}del number of $\varphi$ is smaller than that of $\neg \varphi$. 
Thus, $g_1$ outputs $\neg \varphi$ before outputting $\varphi$. 

$(\Leftarrow)$: Suppose that $g_1$ outputs $\neg \varphi$ before outputting $\varphi$. 
Since the G\"{o}del number of $\varphi$ is smaller than that of $\neg \varphi$, the first time $\neg \varphi$ is output is not when the elements of the enumeration $\langle \xi_t \rangle_{t\in\omega}$ are output in Procedure 2. 
Thus, $\neg \varphi \in P_{T,m-1} \cup X_{m-1} \cup Y_{i, m-1} = X_{m-1} \cup Y_{i, m-1}$. 
\end{proof}

We show that our Rosser provability predicate $\PR_{g_1}^{\mathrm{R}}(x)$ satisfies the condition $\M$. 

\begin{cl}\label{PCL3}
Let $\varphi$ and $\psi$ be any $\LA$-formulas. 
If $T \vdash \varphi \to \psi$, then $\PA \vdash \PR_{g_1}^{\mathrm{R}}(\gn{\varphi}) \to \PR_{g_1}^{\mathrm{R}}( \gn{\psi})$.
\end{cl}
\begin{proof}
Suppose $T \vdash \varphi \to \psi$. 
Then, $\PA \vdash \Prov_T(\gn{\varphi}) \to \Prov_T(\gn{\psi})$.
Since $\PA + \neg \exists x (S_1(x)\wedge x\neq 0) \vdash \forall x \bigl(\Prov_T(x) \leftrightarrow \PR_{g_1}^{\mathrm{R}}(x) \bigr)$, 
we have $\PA + \neg \exists x (S_1(x)\wedge x\neq 0) \vdash \PR_{g_1}^{\mathrm{R}}(\gn{\varphi}) \to \PR_{g_1}^{\mathrm{R}}(\gn{\psi})$. 

On the other hand, $\PA + \exists x (S_1(x)\wedge x\neq 0) \vdash \PR_{g_1}^{\mathrm{R}}(\gn{\varphi}) \to \PR_{g_1}^{\mathrm{R}}(\gn{\psi})$ is proved in the same way as in the proof of Claim \ref{CL3} by using Claim \ref{PCL2}. 
\end{proof}

When $L = \MNPF$, it is shown that $\PR_{g_1}^{\mathrm{R}}(x)$ satisfies the condition $\D{3}$. 

\begin{cl}\label{CLPF}
If $L = \MNPF$, then for any $\LA$-formula $\varphi$, $\PA \vdash \PR_{g_1}^{\mathrm{R}}(\gn{\varphi}) \to \PR_{g_1}^{\mathrm{R}}(\gn{\PR_{g_1}^{\mathrm{R}}(\gn{\varphi})})$. 
\end{cl}
\begin{proof}
Since $\PR_{g_1}^{\mathrm{R}}(\gn{\varphi})$ is a $\Sigma_1$ sentence, $\PA \vdash \PR_{g_1}^{\mathrm{R}}(\gn{\varphi}) \to \Prov_T(\gn{\PR_{g_1}^{\mathrm{R}}(\gn{\varphi})})$. 
By Claim \ref{PCL1}.1, $\PA \vdash \PR_{g_1}^{\mathrm{R}}(\gn{\varphi}) \to \PR_{g_1}(\gn{\PR_{g_1}^{\mathrm{R}}(\gn{\varphi})})$. 

We show $\PA + \neg \exists x (S_1(x)\wedge x\neq 0) \vdash \forall x \bigl(\PR_{g_1}(x) \leftrightarrow \PR_{g_1}^{\mathrm{R}}(x) \bigr)$ by reasoning in $\PA + \neg \exists x (S_1(x)\wedge x\neq 0)$: 
Since the construction of $g_1$ never swtches to Procedure 2, we have that $g_1(m) = \varphi$ if and only if $m$ is a $T$-proof of $\varphi$. 
By Claim \ref{PCLh}.3, we have that $T$ is consistent. 
Thus, $g_1(m) = \varphi$ if and only if $g_1(m) = \varphi$ and $g_1(k) \neq \neg \varphi$ for all $k < m$. 
This means that $\forall x \bigl(\PR_{g_1}(x) \leftrightarrow \PR_{g_1}^{\mathrm{R}}(x) \bigr)$ holds. 

Then, we have $\PA + \neg \exists x (S_1(x)\wedge x\neq 0) \vdash \PR_{g_1}^{\mathrm{R}}(\gn{\varphi}) \to \PR_{g_1}^{\mathrm{R}}(\gn{\PR_{g_1}^{\mathrm{R}}(\gn{\varphi})})$. 
On the other hand, $\PA + \exists x (S_1(x)\wedge x\neq 0) \vdash \PR_{g_1}^{\mathrm{R}}(\gn{\varphi}) \to \PR_{g_1}^{\mathrm{R}}(\gn{\PR_{g_1}^{\mathrm{R}}(\gn{\varphi})})$ is proved in the similar way as in the proof of Claim \ref{CLF}. 
Finally, by the law of excluded middle, we conclude $\PA \vdash \PR_{g_1}^{\mathrm{R}}(\gn{\varphi}) \to \PR_{g_1}^{\mathrm{R}}(\gn{\PR_{g_1}^{\mathrm{R}}(\gn{\varphi})})$. 
\end{proof}

The following claim is proved by using Claim \ref{PCL2} in the similar way as in the proof of Claim \ref{CL4}. 

\begin{cl}\label{PCL4}
Let $i \in W_n$. 
\begin{enumerate}
\item For any $n$-choice function $c$, $\PA \vdash S_1(\num{i}) \rightarrow \PR_{g_1}^{\mathrm{R}} \Bigl(\gn{\bigvee_{i\prec_n V} S_1(\num{c(V)})} \Bigr)$.
\item For any $V \in \mathcal{P}(W_n)$ satisfying $i \prec_n V$, $\PA \vdash S_1(\num{i}) \rightarrow \neg \PR_{g_1}^{\mathrm{R}} \Bigl(\gn{\bigwedge_{j \in V} \neg S_1(\num{j})} \Bigr)$.
\end{enumerate}
\end{cl}

Let $f_{g_1}$ be the arithmetical interpretation based on $\PR_{g_1}^{\mathrm{R}}(x)$ defined by $f_{g_1}(p) \equiv \exists x \exists y (x \in W_y \land S_1(x) \land x \Vdash_y \gn{p})$. 
Then, the following claim is also proved as in the proof of Claim \ref{CL5} by using Claim \ref{PCL4}. 

\begin{cl}\label{PCL5}
For any $i \in W_n$ and any $\LB$-formula $A$, the following hold:
\begin{enumerate}
\item If $i \Vdash_n A$, then $\PA \vdash S_1(\num{i}) \to f_{g_1}(A)$. 
\item If $i \nVdash_n A$, then $\PA \vdash S_1(\num{i})\to \neg f_{g_1}(A)$.
\end{enumerate}
\end{cl}

We finish our proof of the theorem. 
The first clause follows from Claims \ref{PCL1}, \ref{PCL3}, and \ref{CLPF}.  
The second clause is proved by using Claims \ref{PCLh}.3 and \ref{PCL5}. 
\end{proof}

\begin{cor}[The arithmetical completeness of $\MNP$]\label{ACTMNP}
For any $\LB$-formula $A$, the following are equivalent: 
	\begin{enumerate}
		\item $\MNP \vdash A$. 
		\item $A \in \PL(\PR_T^{\mathrm{R}})$ for any Rosser provability predicate $\PR_T^{\mathrm{R}}(x)$ of $T$ satisfying $\M$. 
	\end{enumerate}
Moreover, there exists a Rosser provability predicate $\PR_T^{\mathrm{R}}(x)$ of $T$ satisfying $\M$ such that $\PL(\PR_T^{\mathrm{R}}) = \MNP$. 
\end{cor}

\begin{cor}[The arithmetical completeness of $\MNPF$]\label{ACTMNPF}
For any $\LB$-formula $A$, the following are equivalent: 
	\begin{enumerate}
		\item $\MNPF \vdash A$. 
		\item $A \in \PL(\PR_T^{\mathrm{R}})$ for any Rosser provability predicate $\PR_T^{\mathrm{R}}(x)$ of $T$ satisfying $\M$ and $\D{3}$. 
	\end{enumerate}
Moreover, there exists a Rosser provability predicate $\PR_T^{\mathrm{R}}(x)$ of $T$ satisfying $\M$ such that $\PL(\PR_T^{\mathrm{R}}) = \MNPF$. 
\end{cor}

In \cite{Kur21}, the existence of a Rosser provability predicate of $T$ satisfying $\M$ and $\D{3}$ is proved. 
Corollary \ref{ACTMNPF} is a strengthening of the result.

\section{Arithmetical completeness of $\MND$}\label{Sec:MND}

In this section, we investigate Rosser provability predicates $\PR_T^{\mathrm{R}}(x)$ such that the schematic consistency statement $\{\neg \bigl(\PR_T^{\mathrm{R}}(\gn{\varphi}) \land \PR_T^{\mathrm{R}}(\gn{\neg \varphi}) \bigr) \mid \varphi$ is an $\LA$-formula$\}$ is provable. 

\begin{prop}[The arithmetical soundness of $\MND$]
Let $\PR_T^{\mathrm{R}}(x)$ be a Rosser provability predicate of $T$ satisfying $\M$. 
Then, $\{\neg \bigl(\PR_T^{\mathrm{R}}(\gn{\varphi}) \land \PR_T^{\mathrm{R}}(\gn{\neg \varphi}) \bigr)\}$ is provable in $T$ if and only if $\MND \subseteq \PL(\PR_T^{\mathrm{R}})$. 
\end{prop}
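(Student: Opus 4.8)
The plan is to prove both directions by directly unwinding the definition of the provability logic $\PL(\PR_T^{\mathrm{R}})$, relying on the already-established arithmetical soundness of $\MN$ for provability predicates satisfying $\M$ (and on the fact that Rosser's provability predicates are provability predicates).

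For the direction $(\Leftarrow)$, I would assume $\MND \subseteq \PL(\PR_T^{\mathrm{R}})$ and fix an arbitrary $\LA$-formula $\varphi$. Since the axiom scheme $\mathrm{D}$ is contained in $\MND$, in particular the instance $\neg(\Box p_0 \land \Box \neg p_0)$ belongs to $\PL(\PR_T^{\mathrm{R}})$. Let $f$ be an arithmetical interpretation based on $\PR_T^{\mathrm{R}}(x)$ with $f(p_0) := \varphi$; then $f(\Box p_0) = \PR_T^{\mathrm{R}}(\gn{\varphi})$ and $f(\Box \neg p_0) = \PR_T^{\mathrm{R}}(\gn{\neg \varphi})$, so by the definition of $\PL(\PR_T^{\mathrm{R}})$ we get $T \vdash \neg(\PR_T^{\mathrm{R}}(\gn{\varphi}) \land \PR_T^{\mathrm{R}}(\gn{\neg \varphi}))$. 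As $\varphi$ was arbitrary, the schematic consistency statement is provable in $T$.

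For the direction $(\Rightarrow)$, I would assume $T \vdash \neg(\PR_T^{\mathrm{R}}(\gn{\varphi}) \land \PR_T^{\mathrm{R}}(\gn{\neg \varphi}))$ for every $\LA$-formula $\varphi$. Because $\PR_T^{\mathrm{R}}(x)$ satisfies $\M$, the arithmetical soundness of $\MN$ gives $\MN \subseteq \PL(\PR_T^{\mathrm{R}})$. Since $\MND$ is obtained from $\MN$ by adding the axiom scheme $\mathrm{D}$, and $\PL(\PR_T^{\mathrm{R}})$ is closed under \textsc{MP}, under \textsc{Nec} (as every provability predicate satisfies $\D{1}$), and under \textsc{RM} (as $\PR_T^{\mathrm{R}}(x)$ satisfies $\M$), it suffices to check that every instance $\neg(\Box A \land \Box \neg A)$ of $\mathrm{D}$ lies in $\PL(\PR_T^{\mathrm{R}})$. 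For such an instance and any arithmetical interpretation $f$ based on $\PR_T^{\mathrm{R}}(x)$, setting $\varphi := f(A)$ we have $f(\neg(\Box A \land \Box \neg A)) = \neg(\PR_T^{\mathrm{R}}(\gn{\varphi}) \land \PR_T^{\mathrm{R}}(\gn{\neg \varphi}))$, which is $T$-provable by hypothesis; hence $\neg(\Box A \land \Box \neg A) \in \PL(\PR_T^{\mathrm{R}})$, and therefore $\MND \subseteq \PL(\PR_T^{\mathrm{R}})$.

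There is no real obstacle here: this is a routine soundness statement, and the whole argument amounts to the observation that the arithmetical reading $f(\mathrm{D})$ of the axiom scheme $\mathrm{D}$ is precisely (an instance of) the schematic consistency statement, so that $\mathrm{D}$ is $T$-verified under every interpretation exactly when that schematic statement is provable. The only point demanding a little attention is the bookkeeping in $(\Rightarrow)$: one must confirm that $\PL(\PR_T^{\mathrm{R}})$ is closed under all the inference rules of $\MND$, so that containing $\MN$ together with all instances of $\mathrm{D}$ forces it to contain every theorem of $\MND$.
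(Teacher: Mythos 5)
Your proof is correct and is exactly the routine unwinding the paper has in mind: the paper states this proposition without proof, and both directions amount to observing that the arithmetical reading of the axiom scheme $\mathrm{D}$ is precisely the schematic consistency statement, with closure of $\PL(\PR_T^{\mathrm{R}})$ under \textsc{MP}, \textsc{Nec} (from $\D{1}$), and \textsc{RM} (from $\M$) handling the remaining theorems of $\MND$ in the $(\Rightarrow)$ direction. The only wrinkle, inherited from the paper's own conventions rather than introduced by you, is that arithmetical interpretations assign $\LA$-\emph{sentences} to propositional variables while the schematic consistency statement is indexed by $\LA$-\emph{formulas}, and that $f(\Box \neg A)$ is literally $\PR_T^{\mathrm{R}}(\gn{f(A) \to 0=1})$ rather than $\PR_T^{\mathrm{R}}(\gn{\neg f(A)})$ (a gap that $\M$ bridges, since the two coded formulas are $T$-provably equivalent); neither point affects the substance of your argument.
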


We prove the existence of a Rosser provability predicate $\PR_T^{\mathrm{R}}(x)$ of $T$ exactly corresponding to $\MND$. 
The idea of our proof of the theorem follows almost the same as that of the proof of Theorem \ref{UACTMNP}.
However, Claim \ref{PCL2} in the proof of Theorem \ref{UACTMNP} does not hold for a proof we require in this section. 
This is because if the equivalence $\neg \varphi \in X_{m-1} \cup Y_{i, m-1} \iff \neg \PR_T^{\mathrm{R}}(\gn{\varphi})$ holds for any $\varphi$, then for some formula $\varphi$ such that $\neg \varphi, \neg \neg \varphi \notin X_{m-1} \cup Y_{i, m-1}$, $\neg \bigl(\PR_T^{\mathrm{R}}(\gn{\varphi}) \land \PR_T^{\mathrm{R}}(\gn{\neg \varphi}) \bigr)$ does not hold. 
This is contrary to the requirement that $\PR_T^{\mathrm{R}}(x)$ corresponds to $\MND$. 
Therefore, we change our strategy of constructing a function enumerating all theorems of $T$ so that the equivalence $\varphi \in X_{m-1} \cup Y_{i, m-1} \iff \PR_T^{\mathrm{R}}(\gn{\varphi})$ holds. 

In this section, we use a primitive recursive function $h$ introduced in the paper \cite{Kur20_1} instead of $h_0$ and $h_1$. 
In that paper, the arithmetical completeness theorem for the normal modal logic $\mathsf{KD}$ is proved by using the function $h$. 
Also, in the paper \cite{Kur}, the arithmetical completeness theorems of extensions of the non-normal modal logic $\mathsf{N}$ are also proved by using $h$. 
As a matter of fact, the arithmetical completeness of $\MN$ and $\MNP$ can be proved by using $h$ instead of $h_0$ and $h_1$, respectively. 
The advantage of using the function $h$ here is that the definition of $h$ is simpler and Proposition \ref{Prop:h} below has already been established.

The function $h$ is defined as follows by using the recursion theorem: 
\begin{itemize}
	\item $h(0) = 0$. 
	\item $h(m+1) = \begin{cases} i & \text{if}\ h(m) = 0\\
				& \quad \&\ i = \min \{j \in \omega \setminus \{0\} \mid \neg S(\num{j}) \ \text{is a t.c.~of}\ P_{T, m}\}, \\
			h(m) & \text{otherwise}.
		\end{cases}$
\end{itemize}
Here, $S(x)$ is the $\Sigma_1$ formula $\exists y(h(y) = x)$. 
Then, the following proposition holds: 

\begin{prop}[Cf.~{\cite[Lemma 3.2.]{Kur20_1}}]\label{Prop:h}
\leavevmode
\begin{enumerate}
	\item $\PA \vdash \forall x \forall y(0 < x < y \land S(x) \to \neg S(y))$. 
	\item $\PA \vdash \Prov_T(\gn{0=1}) \leftrightarrow \exists x(S(x) \land x \neq 0)$. 
	\item For each $i \in \omega \setminus \{0\}$, $T \nvdash \neg S(\num{i})$. 
	\item For each $m \in \omega$, $\PA \vdash \forall x \forall y(h(x) = 0 \land h(x+1) = y \land y \neq 0 \to x \geq \num{m})$. 
\end{enumerate}
\end{prop}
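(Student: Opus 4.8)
The plan is to verify the four clauses one at a time, following the pattern of Claims \ref{CLh} and \ref{PCLh} but taking advantage of the fact that the recursion defining $h$ refers only to $S$ itself, with no occurrence of any provability predicate. For clause 1, the decisive observation is that $h$ freezes as soon as it leaves the value $0$: by $\PA$-induction on $v$, using the ``otherwise'' branch of the definition (which applies whenever $h(v) \neq 0$), one proves $\PA \vdash \forall u \forall v \forall x(x \neq 0 \land h(u) = x \land v \geq u \to h(v) = x)$. Arguing then in $\PA$: if $0 < x < y$, $S(x)$, and $S(y)$ all hold, fix $a, b$ with $h(a) = x$ and $h(b) = y$; both are positive since $h(0) = 0$, and comparing $a$ with $b$ and applying this stabilization fact forces $x = y$, contradicting $x < y$.

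For clause 2 I would treat the two directions separately. For $(\to)$, work in $\PA$ and suppose $\Prov_T(\gn{0=1})$; then $T$ is inconsistent, so in particular $T \vdash \neg S(\num{1})$, and choosing $m$ at least as large as both a $T$-proof of $\neg S(\num{1})$ and $\gn{\neg S(\num{1})}$ puts $\neg S(\num{1})$ into $P_{T,m}$, so $\neg S(\num{1})$ is a tautological consequence of $P_{T,m}$. If $h(m) = 0$, the first branch of the definition gives $h(m+1) = 1 \neq 0$; if $h(m) \neq 0$, a nonzero value has already been attained; either way $\exists x(S(x) \land x \neq 0)$. For $(\leftarrow)$, suppose $S(i)$ holds for some $i \neq 0$ and let $m+1$ be the least positive stage at which $h$ becomes nonzero, say $h(m+1) = i'$; then $h(m) = 0$, so by the definition of $h$ the formula $\neg S(\num{i'})$ is a tautological consequence of $P_{T,m}$ and hence $T$-provable by Proposition \ref{FProp}.1, while $h(m+1) = i'$ witnesses the true $\Sigma_1$ sentence $S(\num{i'})$, which is therefore also $T$-provable by formalized $\Sigma_1$-completeness. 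Thus $T$ is inconsistent, i.e.\ $\Prov_T(\gn{0=1})$.

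Clause 3 is a metatheoretic statement, and I would derive it from clause 2 together with the standing assumption that $T$ is consistent: if $T \vdash \neg S(\num{i})$ for some standard $i \neq 0$, then, arguing in $\N$ exactly as in the $(\to)$ half of clause 2 with $\num{i}$ in place of $\num{1}$, $\neg S(\num{i})$ lies in $P_{T,m}$ for all sufficiently large $m$, so $\N \models \exists x(S(x) \land x \neq 0)$ and hence $\N \models \Prov_T(\gn{0=1})$ by clause 2, contradicting the consistency of $T$. For clause 4, the key ingredient is that $\PA \vdash h(\num{k}) = 0$ for every standard $k$: since $h$ is primitive recursive, $\PA$ evaluates it correctly numeral by numeral, and by clause 2 (read in $\N$) together with the consistency of $T$, $h$ is identically $0$ on $\N$. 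Granting this, clause 4 follows inside $\PA$: from $x < \num{m}$ one gets $x = \num{k}$ for some $k < m$ (using $\PA \vdash x < \num{m} \to \bigvee_{k < m} x = \num{k}$), so $h(x+1) = h(\num{k+1}) = 0$, contradicting $h(x+1) = y \neq 0$; hence $x \geq \num{m}$.

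The point that needs the most care --- more a bookkeeping hazard than a genuine obstacle --- is keeping straight the difference between provability in $\PA$ and plain truth. Clause 2 has to be carried out entirely \emph{inside} $\PA$, so ``$\neg S(\num{1})$ is a tautological consequence of $P_{T,m}$'' and ``$S(\num{i'})$ is a true $\Sigma_1$ sentence'' must be handled through their arithmetizations and through the formalized $\Sigma_1$-completeness of $T$; by contrast, the fact ``$h \equiv 0$ on $\N$'' used for clause 4 is a consequence of the consistency of $T$ that $\PA$ cannot prove, and it enters the argument only through the individual evaluations $\PA \vdash h(\num{k}) = 0$ for standard $k$.
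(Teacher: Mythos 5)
Your proof is correct and follows essentially the same route the paper takes for the directly analogous Claims \ref{CLh} and \ref{PCLh} (the paper itself defers the proof of Proposition \ref{Prop:h} to the cited Lemma 3.2 of \cite{Kur20_1}): the stabilization fact for clause 1, the formalized $\Sigma_1$-completeness argument for clause 2, the reduction of clause 3 to clause 2 via consistency of $T$, and the numeralwise evaluation $\PA \vdash h(\num{k}) = 0$ for clause 4. Your extra care about the Gödel-number bound needed for membership in $P_{T,m}$ and about where the argument is internal to $\PA$ versus in $\N$ is appropriate but does not change the approach.
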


\begin{thm}[The uniform arithmetical completeness of $\MND$]\label{UACTMND}
There exists a Rosser provability predicate $\PR_T^{\mathrm{R}}(x)$ of $T$ satisfying $\M$ such that
\begin{enumerate}
	\item for any $\LB$-formula $A$ and any arithmetical interpretation $f$ based on $\PR_T^{\mathrm{R}}(x)$, if $\MND \vdash A$, then $\PA \vdash f(A)$, and
	\item there exists an arithmetical interpretation $f$ based on $\PR_T^{\mathrm{R}}(x)$ such that for any $\LB$-formula $A$, $\MND \vdash A$ if and only if $T \vdash f(A)$. 
\end{enumerate}
\end{thm}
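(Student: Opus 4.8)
The plan is to follow the proof of Theorem~\ref{UACTMNP} as closely as possible, using the simpler function $h$ of~\cite{Kur20_1} (and the formula $S(x) \equiv \exists y(h(y) = x)$, whose behaviour is recorded in Proposition~\ref{Prop:h}) in place of $h_0$ and $h_1$, and making one essential modification to the second procedure of the enumeration function. First I would apply Theorem~\ref{FFP} to fix a primitive recursively representable enumeration $\langle (W_n, \prec_n, \Vdash_n) \rangle_{n \in \omega}$ of pairwise disjoint finite $\MND$-models with $\bigcup_{n \in \omega} W_n = \omega \setminus \{0\}$ such that every formula not provable in $\MND$ is falsified at some world of some $W_n$. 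Then I would define a primitive recursive function $g$ by the recursion theorem: while $h$ is still $0$, Procedure~$1$ lets $g$ copy $T$-proofs; and when $h$ jumps from $0$ to some $i \neq 0$ at a stage $m$ with $i \in W_n$, Procedure~$2$ takes over. As in the earlier sections one works with the sets $X_m$ (the $\tto_m$-consequences of the formulas provable by stage $m$) and $Y_{i,m}$ (the formulas derived along the neighborhoods of $i$ in $(W_n,\prec_n)$), and defines $\Prf_g$, $\PR_g$, and the Rosser predicate $\PR_g^{\mathrm{R}}$ exactly as before.

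The crucial point is the design of Procedure~$2$. In Theorem~\ref{UACTMNP} the outputs were arranged so that $\neg \varphi \in X_{m-1} \cup Y_{i,m-1} \iff \neg \PR_g^{\mathrm{R}}(\gn{\varphi})$ (Claim~\ref{PCL2}), but, as explained in the opening of this section, that equivalence would force $\PR_g^{\mathrm{R}}(\gn{\varphi})$ and $\PR_g^{\mathrm{R}}(\gn{\neg\varphi})$ to hold simultaneously for suitable $\varphi$, contradicting the axiom $\mathrm{D}$. Instead, Procedure~$2$ should first enumerate the elements of $X_{m-1} \cup Y_{i,m-1}$ (say in ascending order of G\"odel numbers) and only afterwards all remaining formulas, so that $\PR_g^{\mathrm{R}}$ ``believes'' exactly $X_{m-1} \cup Y_{i,m-1}$; that is, the target equivalence is $\varphi \in X_{m-1} \cup Y_{i,m-1} \iff \PR_g^{\mathrm{R}}(\gn{\varphi})$. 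For this to hold for \emph{every} $\varphi$ it suffices that $X_{m-1} \cup Y_{i,m-1}$ be both negation-consistent (guaranteed by $h(m)=0$, since then $P_{T,m-1}$ is propositionally satisfiable) and negation-complete: if $\neg\varphi \in X_{m-1}\cup Y_{i,m-1}$ then $\neg\varphi$ lands in the initial block and so its first occurrence precedes that of $\varphi$. Negation-completeness is precisely where the $\MND$-frame condition enters: since $(W_n,\prec_n)$ is an $\MND$-frame, for every set $U$ of worlds one has $i \prec_n U$ or $i \prec_n (W_n\setminus U)$, and this must be used to arrange that $Y_{i,m-1}$ decides every $\chi \in F_{m-1}$, i.e. that at least one of $\chi, \neg\chi$ lies in $X_{m-1}\cup Y_{i,m-1}$; this is the arithmetical counterpart of Proposition~\ref{FC}.3.

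Granting the identification ``if $h(m)=0$ and $h(m+1)=i\neq 0$ with $i \in W_n$, then $\varphi \in X_{m-1}\cup Y_{i,m-1} \iff \PR_g^{\mathrm{R}}(\gn{\varphi})$'' inside $\PA$, the rest runs in parallel with Theorem~\ref{UACTMNP}. One checks that $\PR_g^{\mathrm{R}}(x)$ is a Rosser's provability predicate of $T$ (using Proposition~\ref{Prop:h} for the agreement of $\Prf_g$ with $\Proof_T$ on standard numerals and for $\PR_g \leftrightarrow \Prov_T$, since Procedure~$2$ eventually outputs every formula). The condition $\M$ follows because $X_{m-1}\cup Y_{i,m-1}$ is closed under $T$-provable implications, as in Claim~\ref{PCL3}. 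The axiom $\mathrm{D}$, i.e. $\PA \vdash \neg\bigl(\PR_g^{\mathrm{R}}(\gn{\varphi}) \land \PR_g^{\mathrm{R}}(\gn{\neg\varphi})\bigr)$ for every $\LA$-formula $\varphi$, follows from the negation-consistency of $X_{m-1}\cup Y_{i,m-1}$ (in the complementary case $\neg \exists x(S(x)\land x\neq 0)$ the predicate $\PR_g^{\mathrm{R}}$ is the canonical Rosser predicate of a consistent theory, for which the statement is immediate). One then proves the analogue of Claim~\ref{PCL4} for the disjunctions $\bigvee_{i \prec_n V} S(\num{c(V)})$ and conjunctions $\bigwedge_{j \in V}\neg S(\num{j})$, using negation-completeness to pass from $\bigvee_{l \notin U} S(\num{l}) \in X_{m-1}\cup Y_{i,m-1}$ to $\bigwedge_{l \notin U}\neg S(\num{l}) \notin X_{m-1}\cup Y_{i,m-1}$, and finally the truth lemma (the analogue of Claim~\ref{PCL5}) for the interpretation $f_g(p) \equiv \exists x\exists y(x \in W_y \land S(x) \land x \Vdash_y \gn{p})$; here one uses that in an $\MND$-frame $i \Vdash_n \Box B$ iff $\{l : l \Vdash_n B\}$ is a neighborhood of $i$. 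Soundness of $\MND$ is then immediate, and completeness follows as before: if $\MND \nvdash A$, pick $i \in W_n$ with $i \nVdash_n A$; then $\PA \vdash S(\num{i}) \to \neg f_g(A)$, so $T \vdash f_g(A)$ would give $T \vdash \neg S(\num{i})$, contradicting Proposition~\ref{Prop:h}.3.

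The step I expect to be the main obstacle is exactly this redesign of Procedure~$2$ together with the verification that $X_{m-1}\cup Y_{i,m-1}$ is negation-complete: one must set up $Y_{i,m-1}$ so that the alternative ``$i \prec_n U$ or $i \prec_n (W_n\setminus U)$'' really forces every formula to be decided, while keeping $g$ a genuine enumeration of the theorems of $T$ and keeping provable in $\PA$ the identification of what $\PR_g^{\mathrm{R}}$ believes. Everything else is a routine adaptation of the proof of Theorem~\ref{UACTMNP}.
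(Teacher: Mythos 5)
You have put your finger on the right difficulty (what the Rosser predicate should believe about formulas not decided by $X_{m-1}\cup Y_{i,m-1}$), but the fix you propose --- making $X_{m-1}\cup Y_{i,m-1}$ negation-\emph{complete} --- cannot work. Since $\MND \nvdash \Box p \lor \Box \neg p$, the enumeration of countermodels contains a world $i \in W_n$ with $i \nVdash_n \Box p$ and $i \nVdash_n \Box \neg p$, and the truth lemma then forces both $\neg \PR^{\mathrm{R}}_{g}(\gn{f(p)})$ and $\neg \PR^{\mathrm{R}}_{g}(\gn{\neg f(p)})$ to hold under $S(\num{i})$; so the set of formulas believed by the Rosser predicate must be negation-\emph{incomplete}, and hence so must $X\cup Y$ if the equivalence $\varphi \in X \cup Y \iff \PR^{\mathrm{R}}_{g}(\gn{\varphi})$ is to hold. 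At the same time, a tail that simply enumerates the remaining formulas in the standard order $\langle \xi_t\rangle$ outputs each $\varphi$ \emph{before} $\neg\varphi$ (smaller G\"odel number first), so for every undecided $\varphi$ both $\PR^{\mathrm{R}}_{g}(\gn{\varphi})$ and $\PR^{\mathrm{R}}_{g}(\gn{\neg\varphi})$ become true and axiom $\mathrm{D}$ fails. The paper resolves this not by completeness but by a different tail: after listing $X\cup Y$, for each $\xi_t$ it outputs the decreasing tower of negations ending in $\xi_t$ (that is, $\xi_t$ prefixed by $m-1$ negations, then by $m-2$, and so on down to $\xi_t$ itself), so that every undecided $\varphi$ is immediately preceded by $\neg\varphi$ and $\PR^{\mathrm{R}}_{g}$ believes exactly $X\cup Y$.

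Two related corrections. First, the $\MND$-frame condition enters to prove negation-\emph{consistency} of $X\cup Y$ (Claim~\ref{DCL2}), not completeness; the hard case is $\varphi\in Y$ and $\neg\varphi\in Y$, which requires building an increasing chain of subsets $Z_q$ of $W_n$ and repeatedly invoking ``$i\prec_n Z_q$ or $i\prec_n (W_n\setminus Z_q)$'' --- your remark that consistency is ``guaranteed by $h(m)=0$'' only covers the case where both formulas lie in $X$. Second, you cannot keep $Y_{i,m-1}$ ``as in the earlier sections'': because the target equivalence is now positive, $Y$ must be redefined via choice functions, as $\{\varphi\in F_{m-1}\mid \exists c\ \forall U\in\mathcal{P}(W_n)\,(i\prec_n U\Rightarrow S(\num{c(U)})\tto_{m-1}\varphi)\}$, i.e.\ with the quantifier pattern expressing the \emph{truth} of $\Box$ at $i$ rather than its failure. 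With the old definition the analogue of Claim~\ref{PCL4}.1 (that $\bigvee_{i\prec_n V}S(\num{c(V)})$ is believed under $S(\num{i})$) would not go through, since one would need every element of some single neighborhood, not just the chosen ones, to derive that disjunction.
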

\begin{proof}
As in the proof of Theorem \ref{UACTMN}, we obtain a primitive recursively representable enumeration $\langle (W_n, \prec_n, \Vdash_n) \rangle_{n \in \omega}$ of pairwise disjoint finite $\MND$-models such that $W = \bigcup_{n \in \omega} W_n = \omega \setminus \{0\}$ and for any $\LB$-formula $A$, if $\MND \nvdash A$, then there exist $n \in \omega$ and $i \in W_n$ such that $i \nVdash_n A$. 

For each $j \in W_n$ and number $m$, we define the finite sets $X_m$ and $Y_{j, m}$ as follows: 
\begin{itemize}
	\item $X_m : = \{ \varphi \in F_m \mid \exists \psi \in P_{T, m}$ s.t.~$\psi \tto_{m} \varphi\}$.  
	\item $Y_{j, m} : = \{ \varphi \in F_m \mid \exists c$: $n$-choice function s.t.~$\forall U \in \mathcal{P}(W_n)\, (j \prec_n U \Rightarrow S(\num{c(U)}) \tto_{m} \varphi)\}$. 
\end{itemize}

We define a primitive recursive function $g_2$ corresponding to Theorem \ref{UACTMND}. 
As in the proof of Theorem \ref{UACTMNP}, we only give the definition of Procedure 2. 

\vspace{0.1in}

\textsc{Procedure 2}.

Suppose $m$ and $i \neq 0$ satisfy $h(m)=0$ and $h(m+1)=i$. 
Let $n$ be a number such that $i \in W_n$. 
Let $\chi_0, \chi_1, \ldots, \chi_{k-1}$ be an enumeration of all elements of $X_{m-1} \cup Y_{i, m-1}$. 
For $l<k$, we define 
\begin{equation*}
	g_2(m+l)=\chi_l. 
\end{equation*} 
For $t$ and $s < m$, we define
\begin{equation*}
g_2(m+k+mt + s) = \overbrace{\neg \cdots \neg}^{m-s-1} \xi_{t}.
\end{equation*}

The definition of $g_2$ has just been finished. 
We define the formulas $\Prf_{g_2}(x,y)$, $\PR_{g_2}(x)$, and $\PR_{g_2}^{\mathrm{R}}(x)$ as before. 
The proof of the following claim is completely same as that of Claim \ref{PCL1}. 

\begin{cl}\label{DCL1}\leavevmode
\begin{enumerate}
\item $\PA \vdash \forall x \bigl(\Prov_{T}(x) \leftrightarrow \PR_{g_2}(x) \bigr)$.
\item For any $n \in \omega$ and any $\LA$-formula $\varphi$, $\PA \vdash \Proof_{T}(\gn{\varphi},\num{n}) \leftrightarrow \Prf_{g_2}(\gn{\varphi},\num{n})$.
\end{enumerate}
\end{cl}

\begin{cl}\label{DCL2}
$\PA$ proves the following statement: 
``Let $m$ and $i \neq 0$ be such that $h(m) = 0$ and $h(m + 1) = i$. 
Then, for any $\LA$-formula $\varphi$, $\varphi \notin X_{m-1} \cup Y_{i, m-1}$ or $\neg \varphi \notin X_{m-1} \cup Y_{i, m-1}$''. 
\end{cl}
\begin{proof}
We reason in $\PA$: 
Let $m$ and $i$ be as in the statement. 
Also, let $n$ be such that $i \in W_n$. 
Suppose, towards a contradiction, that $\varphi \in X_{m-1} \cup Y_{i, m-1}$ and $\neg \varphi \in X_{m-1} \cup Y_{i, m-1}$. 
We distinguish the following four cases: 

Case 1: $\varphi \in X_{m-1}$ and $\neg \varphi \in X_{m-1}$. \\
There exist $\psi, \rho \in P_{T, m-1}$ such that $\psi \tto_{m-1} \varphi$ and $\rho \tto_{m-1} \neg \varphi$. 
Then, $P_{T, m-1}$ is not propositionally satisfiable.  
This contradicts $h(m) = 0$.  

Case 2: $\varphi \in Y_{i, m-1}$ and $\neg \varphi \in X_{m-1}$. \\
There exist an $n$-choice function $c$ and $\rho \in P_{T, m-1}$ such that $S(\num{c(U)}) \tto_{m-1} \varphi$ for any $U \in \mathcal{P}(W_n)$ with $i \prec_n U$ and $\rho \tto_{m-1} \neg \varphi$. 
Since $\neg (i \prec_n \emptyset)$ and $(W_n, \prec_n)$ is an $\MND$-frame, we have $i \prec_n W_n$. 
Then, $S(\num{c(W_n)}) \tto_{m-1} \varphi$. 
We obtain that $\neg S(\num{c(W_n)})$ is a t.c.~of $P_{T, m-1}$, and this is a contradiction. 

Case 3: $\varphi \in X_{m-1}$ and $\neg \varphi \in Y_{i, m-1}$. \\
Similarly as in Case 2, we have $h(m) \neq 0$, and this is a contradiction. 

Case 4: $\varphi \in Y_{i, m-1}$ and $\neg \varphi \in Y_{i, m-1}$. \\
There exist $n$-choice functions $c_0$ and $c_1$ satisfying the following conditions: 
\begin{itemize}
	\item $S(\num{c_0(U)}) \tto_{m-1} \varphi$ for any $U \in \mathcal{P}(W_n)$ with $i \prec_n U$; 
	\item $S(\num{c_1(U)}) \tto_{m-1} \neg \varphi$ for any $U \in \mathcal{P}(W_n)$ with $i \prec_n U$. 
\end{itemize}	 
Since $i \prec_n W_n$, we have $S(\num{c_0(W_n)}) \tto_{m-1} \varphi$ and $S(\num{c_1(W_n)}) \tto_{m-1} \neg \varphi$.  
We define an increasing sequence $Z_0 \subseteq Z_1 \subseteq \cdots$ of subsets of $W_n$ inductively as follows: 
\begin{itemize}
	\item $Z_0 : = \{c_0(W_n)\}$; 
	\item $Z_{q+1} : = Z_q \cup \{c_0(W_n \setminus Z_q)\}$. 
\end{itemize}
For each $q$, if $W_n \setminus Z_q \neq \emptyset$, then $c_0(W_n \setminus Z_q) \in W_n \setminus Z_q$, so we have $Z_q \subsetneq Z_{q+1}$. 
Hence, $Z_{p-1} = W_n$ for the cardinality $p$ of $W_n$. 

Here, we prove that for any $q < p-1$ and any $j \in Z_q$, $S(\num{j}) \tto_{m-1} \varphi$ by induction on $q$. 
\begin{itemize}
	\item For $q = 0$, the statement holds because $Z_0 = \{c_0(W_n)\}$ and $S(\num{c_0(W_n)}) \tto_{m-1} \varphi$. 

	\item Assume that the statement holds for $q$ and that $q + 1 < p$. 
	If $i \prec_n Z_q$, then $S(\num{c_1(Z_q)}) \tto_{m-1} \neg \varphi$. 
	On the other hand, by the induction hypothesis, we have $S(\num{c_1(Z_q)}) \tto_{m-1} \varphi$ because $c_1(Z_q) \in Z_q$. 
	Thus, $\neg S(\num{c_1(Z_q)})$ is a t.c.~of $P_{T, m-1}$, and this contradicts $h(m) = 0$. 
	Hence, we obtain $\neg (i \prec_n Z_q)$. 
	Since $(W_n, \prec_n)$ is an $\MND$-frame, we get $i \prec_n (W_n \setminus Z_q)$. 
	Then, we have $S(\num{c_0(W_n \setminus Z_q)}) \tto_{m-1} \varphi$. 
	Therefore, the set $Z_{q+1} = Z_q \cup \{c_0(W_n \setminus Z_q)\}$ satisfies the required condition. 
\end{itemize}
	In particular, for $Z_{p-1} = W_n$, we have that for any $j \in W_n$, $S(\num{j}) \tto_{m-1} \varphi$. 
	On the other hand, since $c_1(W_n) \in W_n$ and $S(\num{c_1(W_n)}) \tto_{m-1} \neg \varphi$, we have that $\neg S(\num{c_1(W_n)})$ is a t.c.~of $P_{T, m-1}$. 
	This is a contradiction. 

Therefore, we conclude that $\varphi \notin X_{m-1} \cup Y_{i, m-1}$ or $\neg \varphi \notin X_{m-1} \cup Y_{i, m-1}$. 
\end{proof}

\begin{cl}\label{DCL3}
For any $\LA$-formula $\varphi$, $\PA$ proves the following statement: 
``Let $m$ and $i \neq 0$ be such that $h(m) = 0$ and $h(m + 1) = i$. 
Then, $\varphi \in X_{m-1} \cup Y_{i, m-1}$ if and only if $\PR_{g_2}^{\mathrm{R}}(\gn{\varphi})$ holds''.
\end{cl}
\begin{proof}
Let $\varphi$ be any $\LA$-formula. 
We argue in $\PA$: 
Let $m$ and $i$ be as in the statement of the claim. 

$(\Rightarrow)$: 
Suppose $\varphi \in X_{m-1} \cup Y_{i, m-1}$. 
Then, by the definition of $g_2$, we have $\varphi \in \{g_2(m), \ldots, g_2(m + k-1)\}$, where $k$ is the cardinality of $X_{m-1} \cup Y_{i, m-1}$. 
On the other hand, by Claim \ref{DCL2}, $\neg \varphi \notin X_{m-1} \cup Y_{i, m-1}$. 
Also, $\neg \varphi \notin P_{T, m-1}$. 
By the definition of $g_2$, we obtain $\neg \varphi \notin \{g_2(0), \ldots, g_2(m+k-1)\}$. 
Therefore, $\PR_{g_2}^{\mathrm{R}}(\gn{\varphi})$ holds. 

$(\Leftarrow)$: 
Suppose $\varphi \notin X_{m-1} \cup Y_{i, m-1}$. 
Then, $\varphi \notin P_{T, m-1}$. 
By the definition of $g_2$, $\varphi \notin \{g_2(0), \ldots, g_2(m+k-1)\}$. 
Let $\psi$ be the formula obtained by deleting all leading $\neg$'s from $\varphi$, and let $u$ be the number of deleted $\neg$'s from $\varphi$. 
Then, by Proposition \ref{Prop:h}.4, we have $m \geq u + 2$ (because $\varphi$ is a standard formula).   
Thus, for the unique $t$ with $g_2(m + k + mt + m - 1) = \psi$, we obtain $g_2(m + k + mt + m - u - 2) = \neg \varphi$ and $g_2(m + k + mt + m - u - 1) = \varphi$. 
Moreover, $g_2(m + k + mt + m - u - 1) = \varphi$ is the first output of $\varphi$, and hence $\neg \varphi$ is output before outputting $\varphi$. 
We conclude that $\neg \PR_{g_2}^{\mathrm{R}}(\gn{\varphi})$ holds. 
\end{proof}

\begin{cl}\label{DCL4}
For any $\LA$-formulas $\varphi$ and $\psi$, if $T \vdash \varphi \to \psi$, then $\PA \vdash \PR_{g_2}^{\mathrm{R}}(\gn{\varphi}) \to \PR_{g_2}^{\mathrm{R}}(\gn{\psi})$. 
\end{cl}
\begin{proof}
Suppose $T \vdash \varphi \to \psi$. 
Since $\PA \vdash \Prov_T(\gn{\varphi}) \to \Prov_T(\gn{\psi})$, we obtain $\PA + \neg \exists x (S(x) \land x \neq 0) \vdash \PR_{g_2}^{\mathrm{R}}(\gn{\varphi}) \to \PR_{g_2}^{\mathrm{R}}(\gn{\psi})$ as in the proof of Claim \ref{PCL4}. 
It suffices to prove $\PA + \exists x (S(x) \land x \neq 0) \vdash \PR_{g_2}^{\mathrm{R}}(\gn{\varphi}) \to \PR_{g_2}^{\mathrm{R}}(\gn{\psi})$. 

We reason in $\PA + \exists x (S(x) \land x \neq 0)$: 
Let $m$ and $i \neq 0$ be such that $h(m) = 0$ and $h(m+1) = i$. 
By the supposition, we have $\varphi \to \psi \in P_{T, m-1}$. 
Assume that $\PR_{g_2}^{\mathrm{R}}(\gn{\varphi})$ holds. 
Then, by Claim \ref{DCL3}, we have $\varphi \in X_{m-1} \cup Y_{i, m-1}$. 
We distinguish the following two cases: 

Case 1: $\varphi \in X_{m-1}$. \\
There exists some $\rho \in P_{T, m-1}$ such that $\rho \tto_{m-1} \varphi$. 
Then, $\rho \tto_{m-1} \psi$, and hence $\psi \in X_{m-1}$. 

Case 2: $\varphi \in Y_{i, m-1}$. \\
Let $n$ be such that $i \in W_n$. 
Then, there exists an $n$-choice function $c$ satisfying $S(\num{c(U)}) \tto_{m-1} \varphi$ for any $U \in \mathcal{P}(W_n)$ with $i \prec_n U$. 
Then $S(\num{c(U)}) \tto_{m-1} \psi$ also holds for any such $U$'s. 
Hence, $\psi \in Y_{i, m-1}$. 

In either case, we obtain $\psi \in X_{m-1} \cup Y_{i, m-1}$. 
By Claim \ref{DCL3} again, we conclude that $\PR_{g_2}^{\mathrm{R}}(\gn{\psi})$ holds. 
\end{proof}

\begin{cl}\label{DCL5}
For any $\LA$-formula $\varphi$, $\PA \vdash \neg \bigl(\PR_{g_2}^{\mathrm{R}}(\gn{\varphi}) \land \PR_{g_2}^{\mathrm{R}}(\gn{\neg \varphi}) \bigr)$. 
\end{cl}
\begin{proof}
Let $\varphi$ be any $\LA$-formula. 
Since the sentences $\forall x \bigl(\Prov_T(x) \leftrightarrow \Pr_{g_2}^{\mathrm{R}}(x) \bigr)$ and $\neg \bigl(\Prov_T(\gn{\varphi}) \land \Prov_T(\gn{\neg \varphi}) \bigr)$ are provable in $\PA + \neg \exists x(S(x) \land x \neq 0)$, we obtain $\PA + \neg \exists x(S(x) \land x \neq 0) \vdash \neg \bigr(\PR_{g_2}^{\mathrm{R}}(\gn{\varphi}) \land \neg \PR_{g_2}^{\mathrm{R}}(\gn{\neg \varphi}) \bigr)$. 
So, it suffices to prove $\PA + \exists x(S(x) \land x \neq 0) \vdash \neg \bigl(\PR_{g_2}^{\mathrm{R}}(\gn{\varphi}) \land \neg \PR_{g_2}^{\mathrm{R}}(\gn{\neg \varphi}) \bigr)$. 

We argue in $\PA + \exists x(S(x) \land x \neq 0)$: 
Let $m$ and $i \neq 0$ be such that $h(m) = 0$ and $h(m+1) = i$. 
Then, by Claim \ref{DCL2}, $\varphi \notin X_{m-1} \cup Y_{i, m-1}$ or $\neg \varphi \notin X_{m-1} \cup Y_{i, m-1}$. 
By Claim \ref{DCL3}, we conclude that $\neg \bigl(\PR_{g_2}^{\mathrm{R}}(\gn{\varphi}) \land \PR_{g_2}^{\mathrm{R}}(\gn{\neg \varphi}) \bigr)$ holds. 
\end{proof}

\begin{cl}\label{DCL6}
Let $i \in W_n$. 
\begin{enumerate}
	\item For each $n$-choice function $c$, $\PA \vdash S(\num{i}) \to \PR_{g_2}^{\mathrm{R}} \Bigl(\gn{ \bigvee_{i \prec_n V} S(\num{c(V)})} \Bigr)$.
	
	\item For each $V \in \mathcal{P}(W_n)$ such that $i \prec_n V$, $\PA \vdash S(\num{i}) \to \neg \PR_{g_2}^{\mathrm{R}} \Bigl(\gn{\neg \bigvee_{j \in V}S(\num{j})} \Bigr)$. 
\end{enumerate}
\end{cl}
\begin{proof}
1. Let $c$ be any $n$-choice function. 
We proceed in $\PA + S(\num{i})$: 
Let $m$ and $i \neq 0$ be such that $h(m) = 0$ and $h(m+1) = i$. 
Since $(W_n, \prec_n)$ is a standard finite $\MND$-frame, we obtain that $S(\num{c(U)}) \tto_{m-1} \bigvee_{i \prec_n V} S(\num{c(V)})$ for any $U \in \mathcal{P}(W_n)$ with $i \prec_n U$. 
Then, we have $\bigvee_{i \prec_n V} S(\num{c(V)}) \in Y_{i, m-1}$. 
By Claim \ref{DCL3}, $\PR_{g_2}^{\mathrm{R}}\Bigl(\gn{\bigvee_{i \prec_n V} S(\num{c(V)})} \Bigr)$ holds. 

2. Let $V \in \mathcal{P}(W_n)$ be such that $i \prec_n V$. 
We reason in $\PA + S(\num{i})$: 
Suppose, towards a contradiction, that $\neg \bigvee_{j \in V} S(\num{j}) \in X_{m-1} \cup Y_{i, m-1}$. 
We distinguish the following two cases: 

Case 1: $\neg \bigvee_{j \in V} S(\num{j}) \in X_{m-1}$. \\
There exists some $\psi \in P_{T, m-1}$ such that $\psi \tto_{m-1} \neg \bigvee_{j \in V} S(\num{j})$. 
Since $V$ is non-empty, we find some $j_0 \in V$. 
Then, $\neg S(\num{j_0})$ is a t.c.~of $P_{T, m-1}$. 

Case 2: $\neg \bigvee_{j \in V} S(\num{j}) \in Y_{i, m-1}$. \\
There exists an $n$-choice function $c$ satisfying $S(\num{c(U)}) \tto_{m-1} \neg \bigvee_{j \in V} S(\num{j})$ for any $U \in \mathcal{P}(W_n)$ with $i \prec_n U$.  
Then, $S(\num{c(V)}) \tto_{m-1} \neg \bigvee_{j \in V} S(\num{j})$ because $i \prec_n V$. 
Since $S(\num{c(V)})$ is a disjunct of $\bigvee_{j \in V} S(\num{j})$, we have that $\neg S(\num{c(V)})$ is a t.c.~of $P_{T, m-1}$. 

In either case, $h(m) \neq 0$, and this is a contradiction. 
Therefore, $\neg \bigvee_{j \in V} S(\num{j}) \notin X_{m-1} \cup Y_{i, m-1}$. 
By Claim \ref{DCL3}, we conclude $\neg \PR_{g_2}^{\mathrm{R}} \Bigl(\gn{\neg \bigvee_{j \in V}S(\num{j})} \Bigr)$. 
\end{proof}

We define an arithmetical interpretation $f_{g_2}$ based on $\PR_{g_2}^{\mathrm{R}}(x)$ as in the proof of Theorem \ref{UACTMN}. 
Then, we obtain the following claim as in the proof of Claim \ref{CL5}. 

\begin{cl}\label{DCL7}
Let $i \in W_n$ and $A$ be any $\LB$-formula. 
\begin{enumerate}
	\item If $i \Vdash_n A$, then $\PA \vdash S(\num{i}) \to f_{g_2}(A)$. 

	\item If $i \nVdash_n A$, then $\PA \vdash S(\num{i}) \to \neg f_{g_2}(A)$. 
\end{enumerate}
\end{cl}

The first clause of the theorem follows from Claims \ref{DCL1}, \ref{DCL4} and \ref{DCL5}. 
The second clause is proved by using Proposition \ref{Prop:h}.3 and Claim \ref{DCL7}. 
\end{proof}

\begin{cor}[The arithmetical completeness of $\MND$]\label{ACTMND}
For any $\LB$-formula $A$, the following are equivalent: 
\begin{enumerate}
	\item $\MND \vdash A$. 
	\item $A \in \PL(\PR_T^{\mathrm{R}})$ for any Rosser provability predicate $\PR_T^{\mathrm{R}}(x)$ of $T$ satisfying $\M$ such that $T \vdash \neg \bigl(\PR_T^{\mathrm{R}}(\gn{\varphi}) \land \PR_T^{\mathrm{R}}(\gn{\neg \varphi}) \bigr)$ for all $\LA$-formulas $\varphi$. 
\end{enumerate}
Moreover, there exists a Rosser provability predicate $\PR_T^{\mathrm{R}}(x)$ of $T$ satisfying $\M$ such that $\PL(\PR_T^{\mathrm{R}}) = \MND$. 
\end{cor}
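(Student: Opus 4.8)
The plan is to obtain Corollary~\ref{ACTMND} directly from the arithmetical soundness proposition for $\MND$ stated at the beginning of this section together with Theorem~\ref{UACTMND}; no new construction is required, and the argument is pure bookkeeping.

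For the implication $(1 \Rightarrow 2)$, I would let $\PR_T^{\mathrm{R}}(x)$ be any Rosser's provability predicate of $T$ satisfying $\M$ and $T \vdash \neg\bigl(\PR_T^{\mathrm{R}}(\gn{\varphi}) \land \PR_T^{\mathrm{R}}(\gn{\neg \varphi})\bigr)$ for every $\LA$-formula $\varphi$. By the arithmetical soundness proposition for $\MND$, these two hypotheses together yield $\MND \subseteq \PL(\PR_T^{\mathrm{R}})$, so $\MND \vdash A$ gives $A \in \PL(\PR_T^{\mathrm{R}})$. For the contrapositive of $(2 \Rightarrow 1)$, I would assume $\MND \nvdash A$ and apply Theorem~\ref{UACTMND}. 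Unwinding that proof, the Rosser's predicate it produces is $\PR_{g_2}^{\mathrm{R}}(x)$, which satisfies $\M$ by Claim~\ref{DCL4} and, by Claim~\ref{DCL5}, satisfies $\PA \vdash \neg\bigl(\PR_{g_2}^{\mathrm{R}}(\gn{\varphi}) \land \PR_{g_2}^{\mathrm{R}}(\gn{\neg \varphi})\bigr)$, hence $T$ proves the whole schematic consistency statement; thus $\PR_{g_2}^{\mathrm{R}}(x)$ belongs to the class of predicates quantified over in clause~2. Clause~2 of Theorem~\ref{UACTMND} supplies an arithmetical interpretation $f_{g_2}$ based on $\PR_{g_2}^{\mathrm{R}}(x)$ with $\MND \vdash A \iff T \vdash f_{g_2}(A)$; since $\MND \nvdash A$ we get $T \nvdash f_{g_2}(A)$, so $A \notin \PL(\PR_{g_2}^{\mathrm{R}})$ and clause~2 of the corollary fails for this predicate.

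For the ``moreover'' clause I would again take $\PR_{g_2}^{\mathrm{R}}(x)$ from the proof of Theorem~\ref{UACTMND}. The inclusion $\MND \subseteq \PL(\PR_{g_2}^{\mathrm{R}})$ follows from clause~1 of that theorem: every $\MND$-theorem $B$ satisfies $\PA \vdash f(B)$, hence $T \vdash f(B)$, for every arithmetical interpretation $f$ based on $\PR_{g_2}^{\mathrm{R}}(x)$. Conversely, if $A \in \PL(\PR_{g_2}^{\mathrm{R}})$ then in particular $T \vdash f_{g_2}(A)$, so clause~2 of Theorem~\ref{UACTMND} gives $\MND \vdash A$; hence $\PL(\PR_{g_2}^{\mathrm{R}}) \subseteq \MND$. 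Combining the two inclusions yields $\PL(\PR_{g_2}^{\mathrm{R}}) = \MND$.

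I expect no genuine obstacle at the level of the corollary: the entire substance of the argument has already been absorbed into Theorem~\ref{UACTMND}. The only points requiring attention are confirming that the predicate exhibited in that theorem does prove the schematic consistency statement (Claim~\ref{DCL5}), so that it really lies in the class appearing in clause~2 of the corollary, and checking that the single interpretation $f_{g_2}$ simultaneously witnesses faithfulness in both directions — which is precisely the content of clause~2 of Theorem~\ref{UACTMND}.
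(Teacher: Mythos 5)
Your derivation is correct and is exactly the intended (and, for the analogous corollaries in Sections 4 and 5, implicit) route: the paper leaves this bookkeeping unstated, obtaining $(1\Rightarrow 2)$ from the arithmetical soundness proposition, $(2\Rightarrow 1)$ and the ``moreover'' clause from the two clauses of Theorem \ref{UACTMND} applied to $\PR_{g_2}^{\mathrm{R}}(x)$, whose membership in the relevant class is guaranteed by Claims \ref{DCL4} and \ref{DCL5}. No gaps.
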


\section{Future Work}\label{Sec:FW}

Our proofs of the arithmetical completeness theorems in the present paper are done by embedding $\MN$-models into arithmetic. 
As noted in Remark \ref{Neighborhood}, our semantics based on $\MN$-models is essentially same as monotonic neighborhood semantics. 
Thus, it can be seen that our proofs are done by embedding monotonic neighborhood models, which are not based on relational semantics. 
This brings us to the natural question of whether our argument can be applied to neighborhood semantics in general.
Neighborhood semantics is a semantics for extensions of the logic $\mathsf{EN}$, that is obtained from $\MN$ by replacing the rule \textsc{RM} with \textsc{RE} $\dfrac{A \leftrightarrow B}{\Box A \leftrightarrow \Box B}$, thus we propose the following problem: 

\begin{prob}
Is the logic $\mathsf{EN}$ arithmetically complete with respect to provability predicates $\PR_T(x)$ satisfying the following condition $\mathbf{E}$?
\begin{description}
	\item [$\mathbf{E}$:] If $T \vdash \varphi \leftrightarrow \psi$, then $T \vdash \PR_T(\gn{\varphi}) \leftrightarrow \PR_T(\gn{\psi})$. 
\end{description} 
\end{prob}

In \cite{Kur21}, it is proved that if a provability predicate $\PR_T(\gn{\varphi})$ satisfies $\M$ and $\D{3}$, then there exists an $\LA$-sentence $\varphi$ such that $T \nvdash \neg \bigl(\PR_T(\gn{\varphi}) \land \PR_T(\gn{\neg \varphi}) \bigr)$. 
It follows that for any provability predicate $\PR_T(x)$ of $T$ satisfying $\M$, the non-inclusion $\MNDF \nsubseteq \PL(\PR_T)$ holds. 
However, the condition that $\PR_T(x)$ satisfies $\M$ is sufficient but not necessary for the inclusion $\MN \subseteq \PL(\PR_T)$. 
Then, we propose the following question: 

\begin{prob}\leavevmode
\begin{enumerate}
	\item Is there a provability predicate $\PR_T(x)$ of $T$ such that $\MNDF \subseteq \PL(\PR_T)$ holds?
	\item Furthermore, is there a provability predicate $\PR_T(x)$ of $T$ such that $\MNDF = \PL(\PR_T)$ holds?
\end{enumerate}
\end{prob}

\section*{Acknowledgment}
This work was supported by JSPS KAKENHI Grant Number JP19K14586. 
The authors would like to thank Sohei Iwata, Yuya Okawa, and Hidenori Kurokawa for their helpful comments. 
The authors would also like to thank the anonymous referee for his or her valuable comments and suggestions.

\bibliographystyle{plain}
\bibliography{ref}

\end{document}